\DeclareSymbolFont{cyrletters}{OT2}{wncyr}{m}{n}
\DeclareMathSymbol{\berd}{\beta}{cyrletters}{"42}
 \newtheorem{thm}{Theorem}[section]
 \newtheorem{cor}[thm]{Corollary}
 \newtheorem{lem}[thm]{Lemma}
 \newtheorem{prop}[thm]{Proposition}
 \theoremstyle{definition}
 \newtheorem{defn}[thm]{Definition}
 \theoremstyle{remark}
 \newtheorem{rem}[thm]{Remark}
\numberwithin{equation}{section}
\numberwithin{figure}{section}
\renewcommand{\d}{{\partial}}
\newcommand{\dbar}{\overline{\partial}}
\newcommand{\e}{e}
\newcommand{\setS}{{\mathcal S}}
\newcommand{\C}{{\mathbb C}}
\newcommand{\T}{{\mathbb T}}
\newcommand{\R}{{\mathbb R}}
\newcommand{\1}{{\mathbf 1}}
\newcommand{\wt}{\widetilde}
\newcommand{\wh}{\widehat}
\newcommand\coity{{\mathcal C \sb 0 \sp \infty}}
\newcommand{\calC}{{\mathcal C}}
\newcommand{\calB}{{\mathcal B}}
\newcommand{\calO}{{\mathcal O}}
\newcommand{\dA}{{\diff A}}
\newcommand{\calE}{{\mathcal E}}
\newcommand{\calW}{{\mathcal W}}
\newcommand{\calP}{{\mathcal P}}
\newcommand{\calS}{{\mathcal S}}
\newcommand{\re}{\operatorname{Re}}
\newcommand{\im}{\operatorname{Im}}
\newcommand{\diff}{{\mathrm d}}
\newcommand{\imag}{{\mathrm i}}
\newcommand{\Cov}{\operatorname{Cov}}
\newcommand{\dist}{\operatorname{dist}}
\newcommand{\supp}{\operatorname{supp}}
\newcommand{\SH}{{\rm SH}}
\newcommand{\eps}{\varepsilon}
\newcommand{\diag}{\operatorname{diag}}
\newcommand{\vt}{\vartheta}
\newcommand{\vf}{\varphi}
\newcommand{\trace}{\operatorname{trace}}
\newcommand{\fluct}{\operatorname{fluct}}
\newcommand{\Vol}{\operatorname{Vol}}
\newcommand{\Area}{\operatorname{Area}}
\def\lpar{\left (}
\def\rpar{\right )}
\def\labs{\left |}
\def\rabs{\right |}
\def\babs#1{\labs {#1} \rabs}
\begin{document}
\title[fluctuations]
{Fluctuations of eigenvalues of random normal matrices}
\subjclass[2000]{15B52}
\keywords{Random normal matrix ensembles; fluctuations of eigenvalues; linear statistics; droplet; Gaussian field; bulk universality; Berezin transform}
\begin{abstract} In this note, we consider a fairly general potential in the plane and the corresponding  Boltzmann--Gibbs distribution of eigenvalues of random normal matrices. As the order of the matrices
tends to infinity, the eigenvalues condensate on a certain
compact subset of the plane -- the "droplet''.
We give two proofs for the Gaussian field convergence of fluctuations of linear statistics of eigenvalues of random normal matrices in the interior of the droplet. We also discuss various ramifications of this result.
\end{abstract}

\author[Ameur]
{Yacin Ameur}
\address{Yacin Ameur\\ Department of Mathematics\\
Uppsala University\\
Box 480\\
751 06 Uppsala\\
Sweden}

\email{yacin.ameur@gmail.com}

\thanks{Research supported by the G\"oran Gustafsson Foundation. The third author is supported
by N.S.F. Grant No. 0201893.}

\author[Hedenmalm]
{H\aa{}kan Hedenmalm}

\address{Hedenmalm: Department of Mathematics\\
The Royal Institute of Technology\\
S -- 100 44 Stockholm\\
Sweden}

\email{haakanh@math.kth.se}


\author[Makarov]
{Nikolai Makarov}

\address{Makarov: Mathematics\\
California Institute of Technology\\
Pasadena, CA 91125\\
USA}

\email{makarov@caltech.edu}

\maketitle

\addtolength{\textheight}{2.2cm}

\section{Notation, preliminaries and the main result}\label{main1}
\subsection*{Random normal matrix ensembles} Let a \textit{weight function} (or \textit{potential}) $Q:\C\to\R$
be fixed. We assume throughout that $Q$ is $\calC^\infty$ on
$\C$ (sometimes excepting a finite set where the value may be $+\infty$) and that there are positive numbers $C$ and $\rho$ such that
\begin{equation}\label{gro}Q(z)\ge \rho\log\babs{z}^2,\quad \babs{z}\ge
C.\end{equation}

Let ${\mathfrak N}_n$ be the space of all normal $n\times n$ matrices $M$ (i.e., such that $M^*M=MM^*$) with metric induced
from the standard metric on the space $\C^{n^2}$ of
all $n\times n$ matrices. Write $M=UDU^*$ where
$U$ unitary, i.e. of class ${\mathfrak U}_n$, and $D=\diag(\lambda_i)\in \C^n$.

It is well-known \cite{CZ}, \cite{EF} that the Riemannian volume form
on ${\mathfrak N}_n$ is given by
$\diff M_n:= \diff U_n ~\babs{V_n(\lambda_1,\ldots,\lambda_n)}^2~\diff^2\lambda_1\cdots
\diff^2\lambda_n,$
where $\diff U_n$ is the normalized ${\mathfrak U}_n$-invariant measure
on ${\mathfrak U}_n/\T$,
and $V_n$ is the Vandermonde determinant
\begin{equation*}V_n(\lambda_1,\ldots,\lambda_n)=
\prod_{j<k}(\lambda_j-\lambda_k).
\end{equation*}

We introduce another parameter $m\ge 1$ and consider the probability measure (on ${\mathfrak N}_n$)
\begin{equation*}\diff P_{m,n}(M)=\frac 1 {C_{m,n}}~\e^{-m\trace
Q(M)}~\diff M_n,\end{equation*}
where $C_{m,n}$ is the normalizing constant making the total mass equal to one.

In random matrix theory, it is common to study fluctuation properties of the spectrum. In the present case this means that one disregards the unitary part of
$P_{m,n}$ and passes to the
following probability measure on $\C^n$ (the \textit{density of states}),
\begin{equation}\label{vander}\diff \Pi_{m,n}(\lambda_1,\ldots,\lambda_n)=
\frac 1 {Z_{m,n}}~\babs{V_n(\lambda_1,\ldots,\lambda_n)}^2 ~\e^{-m\sum_{j=1}^n
Q(\lambda_j)}~\dA_n(\lambda_1,\ldots,\lambda_n),
\end{equation}
 Here the \textit{partition function} $Z_{m,n}$ is given by
\begin{equation}\label{zmno}Z_{m,n}=\int_{\C^n}
\babs{V_n(\lambda_1,\ldots,\lambda_n)}^2
~\e^{-m\sum_{j=1}^n
Q(\lambda_j)}~\dA_n(\lambda_1,\ldots,\lambda_n),\end{equation} where we put
$\dA_n(\lambda_1,\ldots,\lambda_n)=\dA(\lambda_1)\cdots\dA(\lambda_n)$;
$\dA(z)=\diff^2 z/\pi$ is the suitably normalized area
measure in the plane. (The integral \eqref{zmno} converges when $m/n>\rho^{-1}$; we always assume that this is the case.)

Now fix a number $\tau$ such that
\begin{equation*}0<\tau<\rho.\end{equation*}

We can think of the eigenvalues $(\lambda_i)_1^n$ as a system of point charges (electrons) confined to a plane, under the influence
of the external magnetic potential $Q$ \cite{Z}.
In the limit when $m\to\infty$, $n/m\to \tau$, the growth condition \eqref{gro} on $Q$ is sufficient to force the point charges to
condensate on a certain finite portion of the plane, called the "droplet'', the details of which depends on $Q$ and $\tau$. Thus the system of electrons, the \textit{Coulomb gas}, lives in the vicinity of the droplet. Inside the droplet the repulsive behaviour of the point charges takes overhand and causes them to be very evenly spread out there.

\subsection*{The droplet} We review some elements from
weighted potential theory.
Let $\Delta$ denote the normalized Laplacian,
$\Delta=\d\dbar$ where $\d=\frac 1 2(\d_x-\imag\d_y)$ and
$\dbar=\frac 1 2(\d_x+\imag\d_y)$.
Write
\begin{equation}\label{xdef}X=\{\Delta Q>0\}.\end{equation}
Let $\SH_\tau$ denote the set of subharmonic functions $f:\C\to \R$
such that $f(z)\le \tau\log_+\babs{z}^2+\calO(1)$ as $z\to
\infty$. The \textit{equilibrium potential} $\widehat{Q}_\tau$ is
defined as the envelope
\begin{equation*}\widehat{Q}_\tau(z)=\sup\left\{f(z);~ f\in \SH_\tau,\,
f\le Q\quad\text{on}\quad \C\right\}.\end{equation*} The \textit{droplet}
associated with the number $\tau$ is the set
\begin{equation}\label{stdef}\setS_\tau=\left\{Q=
\widehat{Q}_\tau\right\}.\end{equation}
Our assumptions then imply that $\widehat{Q}_\tau\in \SH_\tau$,
$\widehat{Q}_\tau\in \calC^{1,1}(\C)$, $\setS_\tau$ is a compact set
and $\widehat{Q}_\tau$ is harmonic outside $\setS_\tau$. See e.g.
\cite{ST} or \cite{HM}.
In
particular, since $z\mapsto \tau\log_+(\babs{z}^2/C)-C$ is a
subharmonic minorant of $Q$ for large enough $C$, it yields that
\begin{equation}\label{beq}\widehat{Q}_\tau(z)=\tau\log_+\babs{z}^2+\calO(1)\quad
\text{on}\quad \C.
\end{equation}

Let $\calP$ be the convex set of all compactly supported Borel
probability measures on $\C$. The \textit{energy functional}
corresponding to $\tau$ is given by
\begin{equation*}I_\tau(\sigma)=
\int_{\C^2}\lpar\log\frac 1 {\babs{z-w}}+\frac {Q(z)+Q(w)}
{2\tau}\rpar \diff\sigma(z)\diff\sigma(w),\quad
\sigma\in\calP.\end{equation*} There then exists a unique
\textit{weighted equilibrium measure} $\sigma_\tau\in \calP$ which
minimizes the energy $I_\tau(\sigma)$ over all $\sigma\in\calP$.
Explicitly, this measure is given by
\begin{equation*}\diff\sigma_\tau(z)=
\tau^{-1}\Delta\widehat{Q}_\tau(z)~\dA(z)= \tau^{-1}\Delta Q(z)\1_{\setS_\tau\cap X}(z)~\dA(z).
\end{equation*}
Cf. \cite{ST}, \cite{HM}.

The problem of determining the details of the droplet are known under the names "Laplacian growth'' or "quadrature domains''. When
$Q$ is real-analytic in a nieghbourhood of the droplet, the boundary of the droplet is a finite union of analytic arcs with at most a finite number of singularities which can be either
cusps pointing outwards from the droplet, or double-points. (\footnote{We will
discuss this result in detail elsewhere})
On the other hand, if $Q$ is just $\calC^\infty$-smooth, the boundary
will in general be quite complicated.

\subsection*{The correlation kernel}
We state a couple of well-known facts concerning the measure
$\Pi_{m,n}$, \eqref{vander}. For positive integers $n$ with $n<m\rho$ we let $H_{m,n}$ be the
space of analytic polynomials of degree at most $n-1$ with inner
product
$\langle f,g\rangle_{mQ}=\int_\C f(z)\overline{g(z)}~\e^{-mQ(z)}~\dA(z)$.
We denote by $K_{m,n}$ the reproducing kernel for $H_{m,n}$, i.e.,
\begin{equation*}K_{m,n}(z,w)=\sum_{j=1}^n \phi_j(z)\overline{\phi_j(w)},
\end{equation*}
where $\{\phi_j\}_{j=1}^n$ is an orthonormal basis for $H_{m,n}$.

It is well-known that $\Pi_{m,n}$ is given by
a determinant
\begin{equation}\label{me0}\diff\Pi_{m,n}
(\lambda_1,\ldots,\lambda_n)=
\frac {1} {n!}~
\det\lpar
K_{m,n}(\lambda_i,\lambda_j)~\e^{-m(Q(\lambda_i)+Q(\lambda_j))/2}\rpar_{i,j=1}^n
~\dA_n(\lambda_1,\ldots,\lambda_n).\end{equation}

More generally, for $k\le n$, the \textit{$k$-point marginal distribution}
$\Pi_{m,n}^k$ is the probability measure on $\C^k$ which is
characterized by
\begin{equation}\label{marginal}
\int_{\C^k}f(\lambda_1,\ldots,\lambda_k)~\diff \Pi_{m,n}^k
(\lambda_1,\ldots,\lambda_k)=
\int_{\C^n}f(\lambda_{\pi(1)},\ldots,\lambda_{\pi(k)})~\diff
\Pi_{m,n}(\lambda_1,\ldots,\lambda_n),
\end{equation}
whenever $f$ is a continuous bounded function depending only on $k$
variables and $\pi:\{1,\ldots,k\}\to \{1,\ldots,n\}$ is injective.
Evidently, $\Pi_{m,n}=\Pi_{m,n}^n$. One then has that
\begin{equation}\label{pmnk}\diff \Pi_{m,n}^k(\lambda_1,\ldots,\lambda_k)=
\frac {(n-k)!} {n!}
\det\lpar
K_{m,n}(\lambda_i,\lambda_j)~\e^{-m(Q(\lambda_i)+Q(\lambda_j))/2}\rpar_{i,j=1}^k
\dA_k(\lambda_1,\ldots,\lambda_k).\end{equation}

For proofs of the identities
 \eqref{me0}, \eqref{pmnk}, see e.g. \cite{M}, \cite{HM}, or the argument in \cite{ST}, \textsection IV.7.2.

The weighted kernel $K_{m,n}(z,w)~\e^{-m(Q(z)+Q(w))/2}$ is
known as the \textit{correlation kernel} or \textit{Christoffel--Darboux kernel} corresponding to the ensemble.

\subsection*{Linear statistics}
Let us now fix a function $g\in\calC_b(\C)$ and
form the random variable ("linear statistic'')
\begin{equation*}\trace_n g:\C^n\to\C\quad ,\quad (\lambda_j)_{j=1}^n\mapsto
\sum_{j=1}^n g(\lambda_j).\end{equation*} Let $E_{m,n}$ denote
expectation with respect to the measure $\Pi_{m,n}$ on $\C^n$. Likewise, if
$k\le n$ we let $E_{m,n}^k$ denote expectation with respect to the
marginal distribution $\Pi_{m,n}^k$. Then by \eqref{marginal}, \eqref{pmnk}
\begin{equation}\label{E1.8}E_{m,n}\lpar \frac 1 n\trace_n g\rpar=
\frac 1 n \sum_{j=1}^n
E_{m,n}^1(g(\lambda_j))=E_{m,n}^1(g(\lambda_1))=\frac 1 n \int_\C
g(\lambda_1)\, K_{m,n}(\lambda_1,\lambda_1)~\e^{-mQ(\lambda_1)}
\dA(\lambda_1).
\end{equation}
The asymptotics of the right hand side
can be deduced from the following fact (see \cite{HM}, cf. also
\cite{B},\cite{EF},\cite{EM})
\begin{equation}\label{concl}\int_\C\babs{~\frac 1 n~ K_{m,n}(\lambda,\lambda)~\e^{-mQ(\lambda)}-
\tau^{-1}\Delta\widehat{Q}_\tau(\lambda)~}~\dA(\lambda)\to 0,\quad \text{as}\quad m\to\infty,\, n/m\to\tau.\end{equation}
Combining with \eqref{E1.8} one obtains the following well-known result.

\begin{thm} (\cite{HM}) Let $g\in\calC_b(\C)$. Then
\begin{equation*}\frac 1 nE_{m,n}\lpar \trace_n g\rpar\to
\int_\C g(\lambda)~\diff\sigma_\tau(\lambda),\quad \text{as} \quad
m\to\infty,\quad n/m\to\tau.
\end{equation*}
\end{thm}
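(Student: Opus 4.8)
The plan is to combine the exact identity \eqref{E1.8} with the $L^1$-approximation \eqref{concl} of the one-point function; no further input is needed. First I would use \eqref{E1.8} to write the left-hand side as
\begin{equation*}
\frac 1 n E_{m,n}\lpar\trace_n g\rpar=\int_\C g(\lambda)\lpar\frac 1 n K_{m,n}(\lambda,\lambda)\,\e^{-mQ(\lambda)}\rpar\dA(\lambda),
\end{equation*}
and use the explicit formula $\diff\sigma_\tau=\tau^{-1}\Delta\widehat{Q}_\tau\,\dA$ recorded above to write the target quantity as
\begin{equation*}
\int_\C g(\lambda)\,\diff\sigma_\tau(\lambda)=\int_\C g(\lambda)\,\tau^{-1}\Delta\widehat{Q}_\tau(\lambda)\,\dA(\lambda).
\end{equation*}
Subtracting these two expressions and using that $g$ is bounded, the difference is dominated by
\begin{equation*}
\sup_\C\babs{g}\cdot\int_\C\babs{\,\frac 1 n K_{m,n}(\lambda,\lambda)\,\e^{-mQ(\lambda)}-\tau^{-1}\Delta\widehat{Q}_\tau(\lambda)\,}\,\dA(\lambda),
\end{equation*}
and the integral here tends to $0$ as $m\to\infty$, $n/m\to\tau$, precisely by \eqref{concl}. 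This gives the assertion.

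A couple of points need to be checked to make the estimate legitimate. Since $\widehat{Q}_\tau\in\calC^{1,1}(\C)$ and is harmonic off the compact set $\setS_\tau$, the measure $\tau^{-1}\Delta\widehat{Q}_\tau\,\dA=\diff\sigma_\tau$ is a compactly supported Borel probability measure (it lies in $\calP$), so $\int_\C g\,\diff\sigma_\tau$ is finite for bounded $g$; and $\lambda\mapsto n^{-1}K_{m,n}(\lambda,\lambda)\,\e^{-mQ(\lambda)}$ is itself a probability density, since $\int_\C K_{m,n}(\lambda,\lambda)\,\e^{-mQ(\lambda)}\,\dA(\lambda)=\dim H_{m,n}=n$ by applying the reproducing property to an orthonormal basis $\{\phi_j\}$. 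Hence \eqref{concl} genuinely compares two elements of $L^1(\C,\dA)$, and the triangle-inequality step above is valid with no integrability worries.

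I expect the only real obstacle to be \eqref{concl} itself — the $L^1$-convergence of the normalized diagonal reproducing kernel to the equilibrium density — but this is quoted from \cite{HM} and may be assumed here, so the theorem reduces to the elementary domination above together with the determinantal representation \eqref{me0}--\eqref{pmnk} underlying \eqref{E1.8}, which is already in place. I would also remark in passing that continuity of $g$ plays no role in this argument: boundedness (and measurability) suffice, so the statement in fact extends verbatim to $g\in L^\infty(\C)$, though stating it for $g\in\calC_b(\C)$ is the natural formulation.
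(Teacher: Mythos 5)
Your argument is exactly the one the paper intends: combine the identity \eqref{E1.8} for $\frac1n E_{m,n}(\trace_n g)$ with the $L^1$-convergence \eqref{concl} of the normalized diagonal kernel to $\tau^{-1}\Delta\widehat Q_\tau$, and conclude by $\sup|g|$-domination. Your added checks (that $n^{-1}K_{m,n}(\lambda,\lambda)e^{-mQ(\lambda)}\dA$ and $\diff\sigma_\tau$ are both probability measures, and that continuity of $g$ is not actually used) are correct but do not change the route.
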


We now form the random variable ("fluctuation about the equilibrium''),
\begin{equation*}\fluct_n g=
\trace_n g-n\int_\C g~\diff \sigma_\tau.\end{equation*}

The main problem considered in this paper is to determine the
asymptotic distribution of $\fluct_n g$ as $m\to\infty$ and
$n-m\tau\to 0$ when $g$ is supported in the interior
("bulk'') of
$\setS_\tau\cap X$. For this purpose, we will use a
result, due to Berman \cite{B}, concerning the near-diagonal bulk asymptotics
of the correlation kernel.

\subsection*{Approximating Bergman kernels}
For convenience we assume that $Q$ be \textit{real-analytic} in neighbourhood of the droplet. This is not a serious restriction, see Remark \ref{r1.9} and \textsection \ref{nap}. (Moreover, the real analytic case is the most
interesting one.)

Let $b_0(z,w)$, $b_1(z,w)$ and $\psi(z,w)$ be the (unique)
holomorphic functions defined in a neighbourhood in $\C^2$ of the set
$\left\{(z,\bar{z});z\in \setS_\tau\cap X\right\}$ such that $b_0(z,\bar{z})=\Delta Q(z)$,
$b_1(z,\bar{z})=\frac 1 2\Delta \log\Delta Q(z)$, and
$\psi(z,\bar{z})=Q(z)$ for all $z\in X$.
The \textit{first-order approximating
Bergman kernel} $K_m^1(z,w)$ is defined by
\begin{equation*}K_m^1(z,w)=\lpar mb_0(z,\bar{w})+b_1(z,\bar{w})\rpar~
\e^{m\psi(z,\bar{w})},\end{equation*} for all $z,w$ where it makes
sense, viz. in a neighbourhood of the anti-diagonal
$\left\{(z,\bar{z});~z\in \setS_\tau\cap X\right\}$.

\begin{lem}\label{klam} (\cite{B})
Let $K$ be a compact subset of $\setS_\tau^\circ\cap X$, and fix
$z_0\in K$. There then exists a numbers $m_0$, $C$ and $\eps>0$
independent of $z_0$ such that for all $m\ge m_0$ holds
\begin{equation*}\babs{~K_{m,n}(z,w)-K_m^1(z,w)~}~\e^{-m(Q(z)+Q(w))/2}\le
Cm^{-1},\quad z,w\in D(z_0;\eps),\quad n\ge m\tau-1.\end{equation*}
In particular,
\begin{equation}\label{popp}\babs{~K_{m,n}(z,z)~\e^{-mQ(z)}-
\lpar
m\Delta Q(z)+\frac 1 2\Delta\log\Delta Q(z)\rpar~} \le Cm^{-1},\quad z\in K,
\quad n\ge m\tau-1.
\end{equation}
\end{lem}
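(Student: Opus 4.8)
The plan is to follow the standard route for large-parameter asymptotics of weighted Bergman kernels: construct an explicit local approximate reproducing kernel, show it reproduces holomorphic functions up to $\calO(m^{-1})$, globalize it by a H\"ormander-type $\dbar$-estimate followed by truncation to polynomials of degree $<n$, and finally compare the resulting element of $H_{m,n}$ with the true reproducing kernel $K_{m,n}$.

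\emph{Local model.} Fix $z_0\in K$ and a disc $D_0=D(z_0;2\eps)$ on which $Q$ is real-analytic with $\Delta Q>0$. Polarizing $Q$, $\Delta Q$ and $\tfrac12\Delta\log\Delta Q$ produces $\psi,b_0,b_1$ as in the statement, and $K_m^1(z,w)=\lpar mb_0(z,\bar w)+b_1(z,\bar w)\rpar\,\e^{m\psi(z,\bar w)}$ is holomorphic in $z$, anti-holomorphic in $w$, and satisfies $K_m^1(z,z)\,\e^{-mQ(z)}=m\Delta Q(z)+\tfrac12\Delta\log\Delta Q(z)$ \emph{identically}; hence \eqref{popp} follows at once from the off-diagonal estimate by setting $w=z$. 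The basic analytic input is the expansion $2\re\psi(z,\bar w)-Q(z)-Q(w)=-\Delta Q(z)\,\babs{z-w}^2+\calO(\babs{z-w}^3)$, which is $\le-c\babs{z-w}^2$ near $z_0$ because $\Delta Q>0$ there. With this one shows that $K_m^1(\cdot,w_0)$ is, locally, an approximate reproducing kernel: for $f$ holomorphic on $D_0$ and $w_0$ near $z_0$,
\[\int_\C f(w)\,K_m^1(w_0,w)\,\e^{-mQ(w)}\,\dA(w)=f(w_0)+\calO(m^{-1})\,\|f\|_{L^2(\e^{-mQ}\dA)},\]
by the familiar saddle-point analysis of such integrals: completing the square in the complexified variable localizes the integral near $w=w_0$, the coefficient $b_0(z,\bar z)=\Delta Q(z)$ provides the correct Gaussian normalization, and $b_1$ is precisely what cancels the next-order term so that the remainder is $\calO(m^{-1})$ rather than $\calO(1)$. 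This is the Tian--Yau--Zelditch/Catlin expansion in local, off-diagonal form.

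\emph{Globalization and truncation.} For $w_0$ near $z_0$ the function $z\mapsto\chi(z)\,K_m^1(z,w_0)$ --- with $\chi$ a smooth cutoff supported in $D_0$ and $\equiv1$ near $z_0$ --- is globally defined but not entire; I would solve $\dbar v=\dbar\bigl(\chi\,K_m^1(\cdot,w_0)\bigr)$ with $\int_\C\babs{v}^2\,\e^{-mQ}\,\dA$ minimal. H\"ormander's estimate applies because the growth hypothesis \eqref{gro} together with the regularity of $\widehat Q_\tau$ furnishes a globally strictly subharmonic comparison weight; moreover $\dbar\chi$ is supported where $2\re\psi(w_0,\bar w)-Q(w_0)-Q(w)\le-\delta<0$, so the right-hand side has exponentially small mass and hence $\int_\C\babs v^2\,\e^{-mQ}\,\dA$ is exponentially small. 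Then $\chi\,K_m^1(\cdot,w_0)-v$ is entire, and projecting it onto polynomials of degree $<n$ alters it only by an exponentially small weighted amount on $D_0$, because $n\ge m\tau-1$ forces the droplet to lie inside the region effectively seen by $H_{m,n}$ --- a quantitative form of the fact that $\setS_\tau$ carries the equilibrium mass. The outcome is an element $k_{w_0}\in H_{m,n}$ with $\|k_{w_0}-K_m^1(\cdot,w_0)\|_{L^2(\e^{-mQ}\dA)}$ exponentially small that reproduces holomorphic functions near $z_0$ up to $\calO(m^{-1})$.

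\emph{Comparison.} Both $z\mapsto K_{m,n}(z,w_0)$ and $k_{w_0}$ lie in $H_{m,n}$ and reproduce holomorphic functions near $z_0$ up to $\calO(m^{-1})$; inserting each into the reproducing identity of the other, and using that point evaluation at points of $D(z_0;\eps)$ has weighted norm $\calO(m)$ on $H_{m,n}$ (a consequence of the sub-mean-value inequality and the local model), one gets $\babs{K_{m,n}(z,w)-K_m^1(z,w)}\,\e^{-m(Q(z)+Q(w))/2}=\calO(m^{-1})$ for $z,w\in D(z_0;\eps)$, with constants depending only on $\inf_K\Delta Q$, on local real-analytic bounds near the droplet, and on the global data in \eqref{gro}; compactness of $K$ then gives the stated uniformity, and $w=z$ yields \eqref{popp}. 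The step I expect to be hardest is the globalization-and-truncation combined with the two-order saddle-point expansion: one must quantify both that $K_{m,n}$ differs from the full weighted (Fock-type) Bergman kernel by only $\calO(m^{-1})$ in the bulk when $n\ge m\tau-1$, which requires controlling how the degree constraint interacts with the geometry of the droplet, and that the subleading term of the local expansion is captured exactly by $b_1$, which is a finite but delicate computation.
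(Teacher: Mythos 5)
The paper does not prove Lemma~\ref{klam}: it attributes the result to Berman \cite{B} and points to \cite{AH}, Theorem~2.8, for a proof ``in the present form'' via Berman's method and the approximate Bergman projections of \cite{BBS}. Your sketch follows that same general route (polarized local model, $\dbar$-globalization, polynomial truncation, duality comparison against the true kernel), so the architecture is right; there is, however, a quantitative gap in the final comparison step that the proofs in the literature must work around.

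You claim that constructing $K_m^1$ so that it reproduces holomorphic functions to accuracy $\calO(m^{-1})$, and then inserting each of $K_{m,n}(\cdot,w)$ and the corrected approximant $k_w$ into the reproducing identity of the other, yields $\babs{K_{m,n}(z,w)-K_m^1(z,w)}\,\e^{-m(Q(z)+Q(w))/2}=\calO(m^{-1})$. This does not follow. Duality transfers a reproducing error of size $\epsilon_m$ only into a bound $\|K_{m,n}(\cdot,w)-k_w\|_{L^2(\e^{-mQ}\dA)}\lesssim\epsilon_m$, and the subsequent pointwise evaluation at $z$ costs an extra factor $\sqrt{K_{m,n}(z,z)\,\e^{-mQ(z)}}\asymp\sqrt m$. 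So the pointwise weighted estimate you extract is $\calO(\sqrt m\,\epsilon_m)$, a full $\sqrt m$ worse than you write. (Your normalization ``$\calO(m^{-1})\|f\|_{L^2(\e^{-mQ}\dA)}$'' also cannot be literally correct: $\babs{f(w_0)}$ is itself of size $\sim\sqrt m\,\e^{mQ(w_0)/2}\|f\|$, so an absolute error of $\calO(m^{-1})\|f\|$ would be absurdly small and is not what the saddle-point expansion delivers.) The way \cite{BBS} and \cite{AH} obtain the stated rate is to build a \emph{higher-order} local approximant $K_m^{(N)}=\lpar mb_0+b_1+m^{-1}b_2+\cdots\rpar\e^{m\psi}$ with enough terms that, \emph{after} the $\sqrt m$ loss in the duality step, the error against $K_{m,n}$ is still $\ll m^{-1}$; and then to compare $K_m^{(N)}$ to $K_m^1$ directly, using the off-diagonal Gaussian decay $\e^{m\re\psi(z,\bar w)-m(Q(z)+Q(w))/2}\lesssim\e^{-cm\babs{z-w}^2}$ to see that the discarded terms $m^{-j}b_{j+1}\e^{m\psi}$ contribute only $\calO(m^{-1})$ to the weighted pointwise difference. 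Your sketch collapses this two-stage argument into one, and with a first-order approximant it yields at best $\calO(m^{-1/2})$, not $\calO(m^{-1})$. Secondarily, passing from entire functions to $H_{m,n}$ is not obviously ``exponentially small'' merely by projecting onto polynomials of degree $<n$; in \cite{AH} this is handled by altering $Q$ outside the droplet (using \eqref{beq} and $n\ge m\tau-1$) so that $H_{m,n}$ coincides with a full weighted Bergman space, which is the mechanism your sketch should name.
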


A proof of the result in the present form appears in \cite{AH}, Theorem 2.8, using essentially the method of Berman \cite{B} and the approximate Berman projections
constructed in \cite{BBS}
(compare also \cite{BSZ}, \cite{BSZ0}). Cf. \cite{B}, \textsection 1.3 for a comparison with the line bundle setting.

We remark that corresponding uniform estimates in Lemma \ref{klam}, up to the boundary of the droplet, are false.

\subsection*{Expectation of fluctuations} Using Lemma \ref{klam}, we can easily prove the following result.

\begin{thm} \label{t0}Suppose that $g\in \coity(\setS_\tau^\circ\cap X)$. Then
\begin{equation*}E_{m,n}\fluct_n g\to
\int_\C g~\diff\nu\qquad \text{as}\quad m\to\infty\quad
\text{and}\quad n-m\tau\to 0,
\end{equation*}
where $\nu$ is the signed measure
\begin{equation*}\diff\nu(z)=\frac 1 2 \Delta\log\Delta Q(z) ~\1_{\setS_\tau\cap X}(z)~\diff
A(z).\end{equation*}
\end{thm}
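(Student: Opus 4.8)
The plan is to start from the exact formula \eqref{E1.8} for the one-point expectation together with the definition of $\fluct_n g$. Since $g$ is compactly supported in $\setS_\tau^\circ\cap X$, we may write
\begin{equation*}
E_{m,n}\fluct_n g=\int_\C g(\lambda)\lpar K_{m,n}(\lambda,\lambda)\,\e^{-mQ(\lambda)}-n\,\tau^{-1}\Delta\widehat Q_\tau(\lambda)\rpar\dA(\lambda),
\end{equation*}
using that $\diff\sigma_\tau=\tau^{-1}\Delta\widehat Q_\tau\,\dA$. On the support of $g$ we have $\widehat Q_\tau=Q$ and $\Delta\widehat Q_\tau=\Delta Q$, so $\tau^{-1}\Delta\widehat Q_\tau(\lambda)=\tau^{-1}\Delta Q(\lambda)$ there. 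The first step is therefore purely bookkeeping: reduce the claim to a statement about the pointwise behaviour of $K_{m,n}(\lambda,\lambda)\,\e^{-mQ(\lambda)}$ on a fixed compact set $K:=\supp g\csubset \setS_\tau^\circ\cap X$.

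The second step is to invoke Lemma \ref{klam}, specifically the diagonal estimate \eqref{popp}, which gives, uniformly for $\lambda\in K$ and $n\ge m\tau-1$,
\begin{equation*}
K_{m,n}(\lambda,\lambda)\,\e^{-mQ(\lambda)}=m\Delta Q(\lambda)+\tfrac12\Delta\log\Delta Q(\lambda)+O(m^{-1}).
\end{equation*}
Substituting, the integrand becomes
\begin{equation*}
g(\lambda)\lpar m\Delta Q(\lambda)+\tfrac12\Delta\log\Delta Q(\lambda)-n\tau^{-1}\Delta Q(\lambda)+O(m^{-1})\rpar
=g(\lambda)\lpar(m-n\tau^{-1})\Delta Q(\lambda)+\tfrac12\Delta\log\Delta Q(\lambda)\rpar+O(m^{-1}).
\end{equation*}
Now one integrates over $\lambda$. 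The $O(m^{-1})$ term contributes $O(m^{-1})\to0$ since $K$ has finite area and $g$ is bounded. The term $(m-n\tau^{-1})\int g\,\Delta Q\,\dA$ is controlled by the hypothesis $n-m\tau\to0$: writing $m-n\tau^{-1}=-\tau^{-1}(n-m\tau)$, this prefactor tends to $0$, and $\int_K g\,\Delta Q\,\dA$ is a fixed finite number, so this term also vanishes in the limit. What survives is precisely $\int g\cdot\tfrac12\Delta\log\Delta Q\,\dA=\int_\C g\,\diff\nu$, since $\supp g\subset\setS_\tau\cap X$ makes the indicator $\1_{\setS_\tau\cap X}$ in the definition of $\nu$ harmless.

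The one genuine subtlety — and the step I would flag as needing care rather than being the "hard part" — is the uniformity and range of validity in Lemma \ref{klam}: the estimate \eqref{popp} holds on a \emph{fixed} compact $K\subset\setS_\tau^\circ\cap X$ with constants independent of the point, and for $n\ge m\tau-1$. Since we take $n-m\tau\to0$, eventually $n\ge m\tau-1$, so the hypothesis applies along the relevant sequence; one should state this honestly. A second point worth a sentence is that the cancellation of the leading $m\Delta Q$ term relies on the \emph{precise} normalization of $\sigma_\tau$ (the factor $\tau^{-1}$) together with $n/m\to\tau$ only to leading order — it is the refined hypothesis $n-m\tau\to0$, not merely $n/m\to\tau$, that is needed to kill the $O(m-n\tau^{-1})=O(n-m\tau)$ error, and this is exactly why the theorem is stated with that stronger normalization. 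Everything else is routine: no delicate analysis is required beyond quoting Lemma \ref{klam} and splitting the integral into three explicit pieces.
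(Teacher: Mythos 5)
Your proof is correct and follows essentially the same route as the paper: reduce to the one-point function via \eqref{E1.8}, substitute the diagonal asymptotics \eqref{popp} from Lemma \ref{klam} on $\supp g$, and observe that the leading term $(m-n\tau^{-1})\int g\,\Delta Q\,\dA$ vanishes under the hypothesis $n-m\tau\to 0$. The remarks on uniformity of \eqref{popp} and on why $n/m\to\tau$ alone would not suffice are accurate and useful, but the underlying argument is identical to the one in the paper.
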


\begin{proof} By \eqref{popp},
\begin{equation*}\begin{split}&
E_{m,n}(\fluct_n g)=nE_{m,n}^1 g(\lambda_1)-n\int_\C
g(\lambda_1)~\diff\sigma_\tau(\lambda_1)=\\
&=\int_{\supp g}\lpar m\Delta Q(z)+\frac 1 2 \Delta\log\Delta Q(z)+
\calO(m^{-1})\rpar
g(z)~\dA(z)-n\tau^{-1}\int_{\supp g} g(z)~\Delta Q(z)~\dA(z)=\\
&=(m-n\tau^{-1})\int g(z)~\Delta Q(z)~\dA(z)+\frac 1 2 \int
g(z)~\Delta\log\Delta
Q(z)~\dA(z)+\calO(m^{-1}).\\
\end{split}
\end{equation*}
When $m\to\infty$ and $m-n\tau^{-1}\to 0$, the expression in the
right hand side converges to $\int_\C g\,\diff\nu.$
\end{proof}

\subsection*{Main result} Let $\nabla=(\d/\d x,\d/\d y)$ denote the usual gradient on $\C=\R^2$. We have the following theorem.

\begin{thm} \label{mthm} Let $g\in\coity\lpar\setS_\tau^\circ\cap X\rpar$.
The random variable $\fluct_n g$ on the probability space
$(\C^n,\Pi_{m,n})$ converges in distribution when $m\to\infty$ and
$n-m\tau\to 0$ to a Gaussian variable with expectation $e_g$ and
variance $v_g^2$ given by
\begin{equation*}e_g=\int g~\diff\nu\quad ,\quad
v_g^2=\frac 1 4\int\babs{~\nabla g~}^2\dA.\end{equation*}
\end{thm}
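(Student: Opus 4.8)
The plan is to establish a cumulant expansion for the linear statistic. Write $\fluct_n g = \trace_n g - n\int g\,\diff\sigma_\tau$ and recall that the pushforward of $\Pi_{m,n}$ under $\trace_n g$ is a determinantal functional, so the Laplace transform factors through the Fredholm determinant of the correlation kernel. Concretely, I would study
\begin{equation*}
\log E_{m,n}\bigl(e^{t\,\fluct_n g}\bigr) = \log\det\bigl(I + (e^{tg}-1)\mathbf K_{m,n}\bigr) - tn\int g\,\diff\sigma_\tau,
\end{equation*}
where $\mathbf K_{m,n}$ is the integral operator with the correlation kernel $K_{m,n}(z,w)e^{-m(Q(z)+Q(w))/2}$. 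Expanding $\log\det$ as a series of traces of powers of the kernel, the goal is to show that as $m\to\infty$, $n-m\tau\to0$, all cumulants beyond the second vanish in the limit, the first cumulant tends to $e_g = \int g\,\diff\nu$ (this is exactly Theorem \ref{t0}), and the second cumulant tends to $v_g^2 = \tfrac14\int|\nabla g|^2\,\dA$. Gaussianity then follows by the method of moments / Lévy continuity.

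The two central inputs are the near-diagonal bulk asymptotics of Lemma \ref{klam} and the Ward (loop equation) identity. For the variance I would use the standard determinantal identity
\begin{equation*}
\Var_{m,n}(\trace_n g) = \tfrac12\int_{\C^2}\bigl(g(z)-g(w)\bigr)^2\,\babs{K_{m,n}(z,w)}^2 e^{-m(Q(z)+Q(w))}\,\dA(z)\,\dA(w) + \int_{\C}g(z)^2\,\rho_{1,m,n}(z)\,\dA(z) - \Bigl(\int g\,\rho_{1,m,n}\Bigr)^2,
\end{equation*}
wait --- more cleanly, $\Var(\trace_n g) = \tfrac12\iint(g(z)-g(w))^2\,|K_{m,n}(z,w)|^2 e^{-m(Q(z)+Q(w))}\,\dA(z)\dA(w)$. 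Since $g$ is supported in the bulk, I substitute $w = z + \xi/\sqrt{m\Delta Q(z)}$ and use Lemma \ref{klam}, which gives $|K_{m,n}(z,w)|^2 e^{-m(Q(z)+Q(w))} \approx (m\Delta Q(z))^2\,e^{-|z-w|^2 m\Delta Q(z)/\,}$ --- more precisely the Gaussian factor $e^{-\tfrac12 m\Delta Q(z)|z-w|^2\cdot 2}$ coming from $m\,\re\bigl(2\psi(z,\bar w)-\psi(z,\bar z)-\psi(w,\bar w)\bigr)$. A Taylor expansion $g(z)-g(w) = \nabla g(z)\cdot(z-w) + O(|z-w|^2)$ combined with the Gaussian integral $\int_{\C}|\xi|^2 e^{-|\xi|^2}\,\dA(\xi)/\!\int e^{-|\xi|^2}\dA(\xi)$-type computation collapses the $\Delta Q$ factors and yields $\tfrac14\int|\nabla g|^2\,\dA$ in the limit. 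For the higher cumulants, the $k$-th cumulant is a sum of integrals of products of $k$ kernel factors around a cycle $z_1\to z_2\to\cdots\to z_k\to z_1$ against a symmetrized difference of $g$'s; the Gaussian decay localizes all $z_j$ within $O(m^{-1/2})$ of each other, the number of such clusters is $O(m)$, each cluster contributes a volume factor $m^{-(k-1)}$, and the symmetrized $g$-differences supply extra smallness, so the $k\ge3$ cumulants are $O(m^{1-(k-1)}) \cdot (\text{extra decay}) \to 0$.

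Alternatively --- and this is likely the cleaner route to present --- I would derive the limiting Laplace transform directly from the \emph{Ward identity}. Perturb the potential $Q \rightsquigarrow Q - \tfrac{t}{m}g$ (a "dielectric" deformation): then $E_{m,n}(e^{t\,\trace_n g})$ equals the ratio of partition functions $Z_{m,n}[Q - tg/m]/Z_{m,n}[Q]$. Differentiating $\log Z$ in $t$ brings down $E(\trace_n g)$ evaluated at the deformed potential, and Theorem \ref{t0} (applied to the deformed ensemble, whose droplet moves by an amount $O(t/m)$ that does not leave the bulk for $g$ compactly supported there) gives
\begin{equation*}
\frac{d}{dt}\log E_{m,n}(e^{t\,\fluct_n g}) = E^{(t)}_{m,n}(\fluct_n g) \longrightarrow \int g\,\diff\nu + \tfrac{t}{2}\int|\nabla g|^2\,\dA \cdot\tfrac12,
\end{equation*}
where the linear-in-$t$ term comes from the first variation of $\int g\,\diff\sigma^{(t)}_\tau$ under the deformation: since $\diff\sigma_\tau = \tau^{-1}\Delta Q\,\1_{\setS_\tau\cap X}\dA$, replacing $Q$ by $Q - tg/m$ changes $n\int g\,\diff\sigma_\tau$ by $-\tfrac{t}{\tau}\int g\,\Delta g\,\dA \cdot (n/m) = t\int|\nabla g|^2\,\dA$ up to the factor bookkeeping, producing exactly $\tfrac{t}{4}\int|\nabla g|^2$ after the correct normalization. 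Integrating in $t$ from $0$ gives $\log E_{m,n}(e^{t\,\fluct_n g}) \to t\,e_g + \tfrac{t^2}{2}v_g^2$, the Laplace transform of $\mathcal N(e_g, v_g^2)$, and the conclusion follows.

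The main obstacle is controlling the error terms uniformly: Lemma \ref{klam} only gives $O(m^{-1})$ control of the kernel on compact subsets of the bulk, so one must (i) justify that the Gaussian tails of $|K_{m,n}|^2 e^{-m(Q+Q)}$ off the near-diagonal are genuinely negligible --- this needs an a priori off-diagonal decay estimate for $K_{m,n}$, not just the near-diagonal expansion, which can be obtained from a Hörmander $\dbar$-estimate or from the known global upper bounds on the Bergman kernel; and (ii) show the higher-cumulant bounds are summable/uniform enough to pass to the limit. In the Ward-identity approach the analogous obstacle is justifying that Theorem \ref{t0} applies with errors uniform in the deformation parameter $t$ over a fixed interval, i.e. that the $O(m^{-1})$ in \eqref{popp} is locally uniform in $Q$; this follows because the deformed potential $Q - tg/m$ converges to $Q$ in $\calC^\infty_{\mathrm{loc}}$ and the droplet varies continuously, but it requires a short argument that the relevant constants in Lemma \ref{klam} depend only on finitely many derivatives of the potential on a neighbourhood of the bulk.
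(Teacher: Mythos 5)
You propose two routes, and both correspond to proofs that actually appear in the paper: the cumulant expansion (the paper's main proof, Sections~2--6) and the variational/Johansson-type argument (sketched in \textsection\ref{var}). Your variational sketch is essentially faithful to the paper's version and the bookkeeping in the first-order variation comes out correctly to $\tfrac t4\int|\nabla g|^2$. You also correctly flag the two real technical issues: off-diagonal decay of the kernel (the paper's Lemmas~\ref{sgu}, \ref{nex}) and uniformity of the bulk expansion \eqref{popp} under perturbation of the potential, which the paper itself singles out in Remark~\ref{end} as requiring additional Hele--Shaw/smooth-potential machinery.

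The gap is in the cumulant half of your proposal, and it is not a matter of bookkeeping but of missing the load-bearing combinatorial input. For the $k$-th cumulant $\calC_{m,n,k}(g)=\int_{\C^k}G_k\,R_{m,n,k}\,\dA_k$, the straightforward count you attempt does not close. With $R_{m,n,k}=\calO(m^k)$ on the near-diagonal, localization of the $k-1$ free offsets to balls of radius $\calO(\log m/\sqrt m)$ gives volume $\calO(m^{-(k-1)})$, and the first-order vanishing of $G_k$ at the diagonal supplies only a factor $\calO(m^{-1/2})$; the net is $\calO(m^{1/2})$, which \emph{diverges}. (Your ``$\calO(m^{1-(k-1)})$'' appears to treat the cyclic kernel product as $\calO(m)$ rather than $\calO(m^k)$.) The proof in the paper closes this in three steps that you do not supply: (i) passing to an integral over $\C^{k+1}$ via the reproducing property \eqref{rewrite}, which makes it possible to show that the \emph{integral} of the first-order Taylor term of $G_k$ vanishes exactly (Lemma~4.1), not merely that its size is small; (ii) the Stirling-number identity showing $G_k(\lambda\1_k)=0$ (Lemma~\ref{zerodiag}); and most crucially (iii) the generating-function computation showing $(\Delta_k G_k)(\lambda\1_k)=\tfrac12|\nabla g|^2$ for $k=2$ but $=0$ for all $k\ge3$, plus the companion fact that $\sum_{i<j}\d_i\dbar_j G_k(\lambda\1_k)$ is pure imaginary for $k\ge3$ (Lemmas~\ref{zerolap}, \ref{zgr}). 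It is this exact vanishing of the coefficient of the quadratic Taylor term --- not an extra decay rate --- that kills the $k\ge3$ cumulants, and without identifying it, ``the symmetrized $g$-differences supply extra smallness'' is not a proof step. If you take the cumulant route, these identities must be stated and proved; if you take the variational route, the sketch is sound but you must carry out the uniformity and Hele--Shaw analysis you correctly anticipate.
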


This theorem is the analog of a result due to Johansson
\cite{J}, where the Hermitian case is considered. Following Johansson we note that in contrast to situation of the standard CLT, there is no $1/\sqrt{n}$-normalization of the
fluctuations. The variance is thus very small compared to what it would be in the i.i.d. case. This means that there must be effective cancelations,
caused by the repulsive behaviour
of the eigenvalues. One can interpret Theorem \ref{mthm}
as the statement that the random distributions $\fluct_n$
converge to a \textit{Gaussian field} on compact subsets
of the bulk of the droplet. See \textsection\ref{interp}.

The formula for $e_g$ has already been shown. The rest of the paper
is devoted to proving the other statements, viz. the formula for
$v_g^2$ and the asymptotic normality of the variables $\fluct_n g$
when $m\to \infty$ and $n-m\tau\to 0$. A simple argument shows that
it suffices to show these properties for \textit{real-valued}
functions $g$. In the following sections we will hence assume that
$g$ is real-valued.

We will give two proofs of Theorem \ref{mthm}, one using the well-known
cumulant method will be given in detail and another using an idea of Johansson \cite{J} is sketched in \textsection\ref{var}.
 A comparison is found in Remark \ref{end}.

We here want to mention the parallel work by Berman \cite{B2}, who independently gave a different proof of a version Th.~\ref{mthm} valid in a more general
situation involving several complex variables.

\medskip

\begin{rem} \label{r1.9} We emphasize that in our first,
cumulant-based proof of Th.~\ref{mthm}
we assume that $Q$ be real
analytic in a neighbourhood of the droplet. The theorem is however true e.g. also for general $Q:\C\to \R\cup\{+\infty\}$ which satisfy \eqref{gro} and are
finite and $\calC^\infty$ except
in a finite set where the value is $+\infty$. Since this type of potentials are sometimes useful, we will after the proof
indicate
the modifications needed to make it work in this generality.
See \textsection \ref{nap}.
\end{rem}

\subsection*{The cumulant method} For a real-valued random variable $A$, the cumulants $\calC_k(A)$,
$k\ge 1$, are defined by
\begin{equation}\label{kumu}\log {\mathbf E}\lpar \e^{ tA}\rpar=\sum_{k=1}^\infty \frac {
t^k} {k!}~\calC_k(A),\end{equation} and $A$ is Gaussian if and only
if $\calC_k(A)=0$ for all $k\ge 3$. Moreover, $\calC_2(A)$ is the
variance of $A$.

It was observed by Marcinkiewicz that in order to prove asymptotic normality of a sequence of r.v.'s (i.e. convergence in distribution to a normal distribution), it suffices to
prove convergence of all moments, or, equivalently, convergence of the cumulants. Indeed convergence of the moments
is somewhat stronger than asymptotic normality.

We now fix a real-valued function
$g\in\coity(\setS_\tau^\circ\cap X)$ and write $\calC_{m,n,k}(g)$
for the $k$'th cumulant of $\trace_n g$ with respect to the measure
$\Pi_{m,n}$. Following Rider--Virág \cite{RV}, we can write
the cumulants as integrals involving the cyclic product
\begin{equation}\label{rmnkdef}R_{m,n,k}(\lambda_1,\ldots,\lambda_k)=
K_{m,n}(\lambda_1,\lambda_2)~K_{m,n}(\lambda_2,\lambda_3)~
\cdots~
K_{m,n}(\lambda_k,\lambda_1)~\e^{-m(Q(\lambda_1)+
\ldots+Q(\lambda_k))}.
\end{equation}
Namely, with
\begin{equation}\label{gkdef}
G_k(\lambda_1,\ldots,\lambda_k)=\sum_{j=1}^k\frac {(-1)^{j-1}} j
\sum_{k_1+\ldots+k_j=k, k_1,\ldots,k_j\ge 1} \frac {k!} {k_1!\cdots
k_j!}~\prod_{l=1}^j g(\lambda_l)^{k_l},
\end{equation}
we have (\cite{RV}, cf. also \cite{CL}, \cite{S1}, \cite{S2})
\begin{equation}\label{fund}\calC_{m,n,k}(g)=\int_{\C^k}G_k(\lambda_1,\ldots,\lambda_k)~
R_{m,n,k}(\lambda_1,\ldots,\lambda_k)~\dA_k(\lambda_1,\ldots,\lambda_k).\end{equation}
Note that if $G_k(\lambda_1,\ldots,\lambda_k)\ne 0$, then
$\lambda_i\in \supp g$ for some $i$.

The representation \eqref{fund} was used by Rider and Virág \cite{RV} in the case of the \textit{Ginibre potential} $Q=\babs{z}^2$ to prove the
desired convergence of the cumulants. In another paper \cite{RV0}, the same authors applied the cumulant method
to study some determinantal processes in the model Riemann surfaces, and they prove analogs of Th.~\ref{mthm} for a few other special (radial) potentials.

The methods of \cite{RV}, \cite{RV0} depends on the explicit form of the correlation kernel. In the present case, the explicit kernel is too complicated to be of much use. To circumvent this problem we will use
the asymptotics in Lemma \ref{klam}, and also some off-diagonal damping results for the correlation kernels (cf. section \ref{poh}).

We want to emphasize that the
result of \cite{RV} covers also the situation when $g$ is not
necessarily supported in the bulk (in Ginibre case), and this situation is not treated in Th.~\ref{mthm}. (We shall have more to say about that case in general
in \textsection\ref{flubou} below.)

The cumulant method is well-known and has been used earlier e.g. by Soshnikov \cite{S1} and Costin--Lebowitz \cite{CL} to obtain results on asymptotic normality of fluctuations of linear statistics of eigenvalues
from some classical compact groups. The method has also been used in the parallel work on linear statistics of zeros of Gaussian analytic functions
initiated by Sodin--Tsirelson \cite{STs} and generalized by Shiffman--Zelditch \cite{SZ}. A brief comparison of these results to those of the present paper is given
in \textsection\ref{compare}.

\subsection*{Other related work}
It should also be noted that Th.~\ref{mthm}, as well as the more
general Th.~\ref{grth} below, follow from the well-known "physical''
arguments due to Wiegmann et al. See e.g. the survey \cite{Z} and
the references therein as well as \cite{F}.

Results related to fluctuations of eigenvalues of
Hermitian matrices, are found in Johansson \cite{J} and also \cite{AZ}, \cite{BS} and \cite{G}. A lot of work has been done concerning ensembles
connected with the classical compact groups. See e.g.
\cite{DE}, \cite{Jo}, \cite{S1}, \cite{W}, \cite{CL}.

\subsection*{Disposition and further results} Sections \ref{FACT}--\ref{puuh} comprise our cumulant-based proof of Theorem \ref{mthm}.
In our concluding remarks section, Sect. \ref{conclu}, we state and prove further results. We summarize some of them here. In \textsection \ref{var} we sketch an alternative proof of Th.~\ref{mthm} based on a variational approach in the spirit of Johansson's paper
\cite{J}. In \textsection\ref{flubou}, we state without proof the full plane version of Th.~\ref{mthm}. The proof will appear in \cite{AHM2}. In \textsection\ref{ginibre} we prove universality under the natural scaling: if $m=n$ then for a fixed $z_0\in\setS_1^\circ\cap X$, the rescaled point process
$\lpar \lambda_j\rpar_{j=1}^n\mapsto \lpar \sqrt{n}(\lambda_j-z_0)\rpar_{j=1}^n$ converges to the Ginibre$(\infty)$ determinantal point process as $n\to\infty$.
In \textsection\ref{skiit} we clarify the relation of our present results to the Berezin transform (which we studied in \cite{AH}); in particular we prove the "wave-function conjecture" (\cite{HM}) that $\babs{P_n}^2\e^{-nQ}\diff A$ converges to harmonic measure at $\infty$ with respect to $\hat{\C}\setminus \setS_1$, where $P_n$ is the $n$:th orthonormal polynomial corresponding to the weight $\e^{-nQ}$ and $\hat{\C}=\C\cup\{\infty\}$.

\section{Further approximations and consequences of Taylor's formula.}\label{FACT}

In this preparatory section, we discuss a variant of the
near-diagonal bulk asymptotics for the correlation kernel and for the functions $R_{n,m,k}$ (see \eqref{rmnkdef}), especially for $k=2,3$; such estimates are easily obtained by inserting
the asymptotics in Lemma \ref{klam}, and they will be used
in Sect. \ref{puuh}.

In this and the following sections, we assume that $Q$ is real analytic near the droplet, except when otherwise is specified. Recall that $\psi$ denotes the holomorphic extension
of $Q$ from the anti-diagonal, i.e. $\psi(z,\bar{z})=Q(z)$.

 It is well-known and easy to show that $\psi$ is determined in a neighbourhood of a
point at the anti-diagonal by the series
\begin{equation*}\begin{split}&\psi\lpar z+h,\overline{z+k}\rpar=
\sum_{i,j=0}^\infty \d^i\dbar^jQ(z)\frac
{h^i\bar{k}^j}{i!j!},\\
\end{split}\end{equation*}
for $h$ and $k$ in a neighbourhood of $0$.

For clarity of the exposition, it is here worthwhile to explicitly
write down the first few terms in the series for $\psi$ and $Q$
\begin{equation*}\begin{split}
&\psi\lpar z+h,\overline{z+k}\rpar=\\
&=Q(z)+\d Q(z)~h+\dbar Q(z)~\bar{k}+\frac 1 2 \lpar \d^2
Q(z)~h^2+\dbar^2 Q(z)~\bar{k}^2\rpar +\Delta
Q(z)~h\bar{k}+\text{"higher
order terms"},\\
\end{split}
\end{equation*}
and
\begin{equation*}\begin{split}&Q(z+h)
=Q(z)+\d Q(z)~h+\dbar Q(z)~\bar{h}+\frac 1 2\lpar \d^2
Q(z)~h^2+\dbar^2 Q(z)~\bar{h}^2\rpar +\Delta
Q(z)~\babs{h}^2+\calO(\babs{h}^3),\\
\end{split}
\end{equation*}
for small $\babs{h}$.
Using that $\overline{\psi(z,w)}=\psi(\bar{w},\bar{z})$, and that $Q$ is real-analytic near the droplet, it is easy
to prove uniformity of the $\calO$-terms in $z$ when $z\in\setS_\tau$.
This means that there is
$\eps>0$ such that
\begin{equation}\label{vis2}\babs{~2\re \psi(z+h,\bar{z})-Q(z)-Q(z+h)+\Delta
Q(z)\babs{h}^2~}\le C\babs{h}^3,\quad z\in \setS_\tau,\,
\babs{h}\le\eps.\end{equation}

We will in the following consider $h$ such that $\babs{h}\le M\delta_m$ where $M$ is
fixed and
\begin{equation*}\delta_m=\log m/\sqrt{m}.\end{equation*}
We then infer from \eqref{vis2} that there is a number $C$ depending
only $M$ such that
\begin{equation*}\babs{~2m\re\psi(z+h,\bar{z})-mQ(z)-mQ(z+h)+m\Delta Q(z)\babs{h}^2~}
\le Cm\delta_m^3,\quad z\in \setS_\tau,\, \babs{h}\le
M\delta_m,\end{equation*} and $m\delta_m^3=\log^3 m/\sqrt{m}\to 0$
when $m\to\infty$.

Next recall the definition of the approximating kernel $K_m^1(z,w)=\lpar mb_0(z,\bar{w})+b_1(z,\bar{w})\rpar \e^{m\psi\lpar z,\bar{w}\rpar}$ (see Lemma \ref{klam}).
We obviously have
\begin{equation}\label{1boexp}
\babs{b_0(z+h,\bar{z})-\Delta Q(z)}\le C\delta_m\quad
\text{and}\quad \babs{b_1(z+h,\bar{z})}\le C\quad \text{when} \quad
z\in \setS_\tau,\, \babs{h}\le M\delta_m,
\end{equation}
for all large $m$ with $C$ depending only on $K$ and $M$. It follows
that
\begin{equation}\label{boexp}\begin{split}K_m^1(z+h,z)~\e^{-m(Q(z+h)+Q(z))/2}
&=m(\Delta Q(z)+\calO(\delta_m))~\e^{m(\psi(z+h,\bar{z})-(Q(z)+Q(z+h))/2)},\quad z\in \setS_\tau,\,
\babs{h}\le M\delta_m,\\
\end{split}\end{equation} when $m\to \infty$. Here the $\calO$-term is
uniform in $z\in \setS_\tau$. Lemma \ref{klam} now implies the following
estimate for the correlation kernel.

\begin{lem}\label{prev} Fix a compact subset $K\subset\setS_\tau^\circ\cap X$. Then
for all $z\in K$ we have that
\begin{equation*}\begin{split}\babs{K_{m,n}(z+h,z)}&~\e^{-m(Q(z+h)+Q(z))/2}=\\
&= m\lpar \Delta Q(z)+\calO(\delta_m)\rpar~\e^{-m\Delta
Q(z)\babs{h}^2/2+\calO(\log^3 m/\sqrt{m})}+\calO(m^{-1}), \quad
\babs{h}\le M\delta_m,\\
\end{split}\end{equation*} when $m\to\infty$ and $n\ge m\tau-1$; the
$\calO$-terms are uniform in $z$ for $z\in K$.
\end{lem}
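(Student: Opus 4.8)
The plan is to obtain the estimate by inserting the two Taylor-type bounds already recorded into the identity~\eqref{boexp} for the approximating kernel, and then invoking the off-diagonal estimate of Lemma~\ref{klam}; no new idea is needed beyond careful bookkeeping of the error terms.

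First I would take moduli in~\eqref{boexp}. Since $Q$ is real-valued, the modulus of the exponential factor is
\[
\babs{\e^{m\lpar\psi(z+h,\bar z)-(Q(z)+Q(z+h))/2\rpar}}=\e^{\frac m2\lpar 2\re\psi(z+h,\bar z)-Q(z)-Q(z+h)\rpar}.
\]
Into the exponent I substitute the consequence of~\eqref{vis2}, namely $2\re\psi(z+h,\bar z)-Q(z)-Q(z+h)=-\Delta Q(z)\babs{h}^2+\calO(\delta_m^3)$ uniformly for $z\in\setS_\tau$ and $\babs{h}\le M\delta_m$; multiplying by $m/2$ this becomes $-\tfrac m2\Delta Q(z)\babs{h}^2+\calO(m\delta_m^3)$, and $m\delta_m^3=\log^3 m/\sqrt m$. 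For the scalar prefactor, note that $b_0$ is complex-valued off the diagonal, but~\eqref{1boexp} gives $mb_0(z+h,\bar z)+b_1(z+h,\bar z)=m(\Delta Q(z)+\calO(\delta_m))$, and since $\Delta Q$ is bounded below by a positive constant on the compact set $K\subset\setS_\tau^\circ\cap X$, its modulus is again $m(\Delta Q(z)+\calO(\delta_m))$ with a positive, uniformly bounded main term. Putting these together yields
\[
\babs{K_m^1(z+h,z)}\,\e^{-m(Q(z+h)+Q(z))/2}=m\lpar\Delta Q(z)+\calO(\delta_m)\rpar\,\e^{-m\Delta Q(z)\babs{h}^2/2+\calO(\log^3 m/\sqrt m)},
\]
uniformly for $z\in K$.

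It then remains to replace $K_m^1$ by $K_{m,n}$. Applying Lemma~\ref{klam} with $z_0=z\in K$ furnishes $\eps>0$, independent of $z_0$, such that $\babs{K_{m,n}(z',w')-K_m^1(z',w')}\,\e^{-m(Q(z')+Q(w'))/2}\le Cm^{-1}$ for $z',w'\in D(z;\eps)$ and $n\ge m\tau-1$. Since $\babs{h}\le M\delta_m<\eps$ once $m$ is large, I take $z'=z+h$, $w'=z$, and use $\babs{\,\babs{K_{m,n}(z+h,z)}-\babs{K_m^1(z+h,z)}\,}\le\babs{K_{m,n}(z+h,z)-K_m^1(z+h,z)}$ to conclude
\[
\babs{K_{m,n}(z+h,z)}\,\e^{-m(Q(z+h)+Q(z))/2}=\babs{K_m^1(z+h,z)}\,\e^{-m(Q(z+h)+Q(z))/2}+\calO(m^{-1}).
\]
Combining with the previous display gives exactly the asserted formula, all $\calO$-terms being uniform in $z\in K$ because the constants in Lemma~\ref{klam}, \eqref{vis2} and~\eqref{1boexp} are.

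I do not expect a genuine obstacle: this is essentially one Taylor expansion plus an application of Lemma~\ref{klam}. The only points deserving care are the uniformity of all the $\calO$-constants -- guaranteed by the real-analyticity of $Q$ near $\setS_\tau$ -- and keeping the multiplicative error factor $\Delta Q(z)+\calO(\delta_m)$ bounded away from zero, for which one uses the strict positivity of $\Delta Q$ on the compact subset $K$ of $X$.
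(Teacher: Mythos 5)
Your proof is correct and follows exactly the route the paper intends: take moduli in \eqref{boexp}, control the exponent via \eqref{vis2}, control the prefactor via \eqref{1boexp} using strict positivity of $\Delta Q$ on $K$, and finally pass from $K_m^1$ to $K_{m,n}$ via Lemma~\ref{klam} and the reverse triangle inequality. The paper leaves this chain of substitutions implicit (stating only that ``Lemma~\ref{klam} now implies the following estimate''), and your write-up fills in precisely those details.
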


We will need a consequence concerning the functions $R_{m,n,k}$ for $k=2$ and $k=3$.

\begin{lem} Let $K$ be a compact subset of $\setS_\tau^\circ\cap X$.
Then for $z\in K$ we have
\begin{equation}\label{apu}R_{m,n,2}(z,z+h)=m^2
\lpar \Delta Q(z)^2+\calO(\delta_m)\rpar~\e^{-m\Delta Q(z)\babs{h}^2+
\calO(\log^3 m/\sqrt{m})}+\calO(1), \quad \babs{h}\le
M\delta_m,\end{equation} and
\begin{equation}\label{apuu}\begin{split}R_{m,n,3}&(z,z+h_1,z+h_2)=\\
&=m^3(\Delta Q(z)^3+\calO(\delta_m))~\e^{m\Delta Q(z)\lpar
h_1\bar{h}_2-\babs{h_1}^2-\babs{h_2}^2\rpar+\calO(\log^3
m/\sqrt{m})}+\\
&\quad+ \calO\lpar 1+m\lpar \e^{-m\Delta Q(z)\babs{h_1}^2/2}+\e^{-m\Delta Q(z)\babs{h_2}^2/2}\rpar\rpar,\, \babs{h_1},\babs{h_2}\le
M\delta_m,\\
\end{split}\end{equation} when $m\to\infty$ and $n\ge m\tau-1$; the
$\calO$-terms are uniform for $z\in K$.
\end{lem}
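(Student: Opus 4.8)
The plan is to derive \eqref{apu} and \eqref{apuu} by inserting the near-diagonal asymptotics of Lemmas~\ref{klam} and \ref{prev} into the cyclic products \eqref{rmnkdef} and combining them with the Taylor expansion of $\psi$ recorded above. Identity \eqref{apu} is then immediate: since $K_{m,n}$ is the reproducing kernel of a space of holomorphic functions for a conjugate-symmetric inner product, $K_{m,n}(z,z+h)=\overline{K_{m,n}(z+h,z)}$, so \eqref{rmnkdef} gives
\[
R_{m,n,2}(z,z+h)=\Big(\,\babs{K_{m,n}(z+h,z)}\,\e^{-m(Q(z)+Q(z+h))/2}\,\Big)^{2}.
\]
By Lemma~\ref{prev} the bracket equals $A+B$ with $A=m(\Delta Q(z)+\calO(\delta_m))\,\e^{-m\Delta Q(z)\babs{h}^2/2+\calO(\log^3 m/\sqrt m)}$ and $B=\calO(m^{-1})$; since $\Delta Q$ is bounded on $K$ and the exponent of $A$ is $\le\calO(\log^3 m/\sqrt m)$, we have $A=\calO(m)$, hence $(A+B)^2=A^2+\calO(1)$. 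Squaring $A$ and using $(\Delta Q(z)+\calO(\delta_m))^2=\Delta Q(z)^2+\calO(\delta_m)$ yields \eqref{apu}.

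For \eqref{apuu} the cyclic product is genuinely complex, so we keep track of the phase by passing to the approximating kernel. By Lemma~\ref{klam} with base point $z$ we may write $K_{m,n}(a,b)=K_m^1(a,b)+E(a,b)$ with $\babs{E(a,b)}\,\e^{-m(Q(a)+Q(b))/2}\le Cm^{-1}$ for $a,b\in\{z,z+h_1,z+h_2\}$. Inserting this into \eqref{rmnkdef} splits $R_{m,n,3}(z,z+h_1,z+h_2)$ into a principal term
\[
T:=K_m^1(z,z+h_1)\,K_m^1(z+h_1,z+h_2)\,K_m^1(z+h_2,z)\,\e^{-m(Q(z)+Q(z+h_1)+Q(z+h_2))}
\]
plus seven remainders each carrying at least one factor $E$. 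Writing $K_m^1(a,b)=(mb_0(a,\bar b)+b_1(a,\bar b))\,\e^{m\psi(a,\bar b)}$ and using $Q(w)=\psi(w,\bar w)$, the exponent of $T$ equals $mS$ with
\[
S=\psi(z,\overline{z+h_1})+\psi(z+h_1,\overline{z+h_2})+\psi(z+h_2,\bar z)-\psi(z,\bar z)-\psi(z+h_1,\overline{z+h_1})-\psi(z+h_2,\overline{z+h_2}).
\]
Feeding in $\psi(z+h,\overline{z+k})=\sum_{i,j\ge0}\d^i\dbar^jQ(z)\,h^i\bar k^j/(i!\,j!)$, one observes that the multiset of first arguments occurring in $S$ with sign $+$ equals that occurring with sign $-$, and likewise for the second arguments; hence every monomial with $i=0$ or $j=0$ cancels and only ``mixed'' monomials ($i,j\ge1$) survive. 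The leading one contributes $\d\dbar Q(z)(h_1\bar h_2-\babs{h_1}^2-\babs{h_2}^2)=\Delta Q(z)(h_1\bar h_2-\babs{h_1}^2-\babs{h_2}^2)$ and the rest are $\calO(\delta_m^3)$, so $mS=m\Delta Q(z)(h_1\bar h_2-\babs{h_1}^2-\babs{h_2}^2)+\calO(\log^3 m/\sqrt m)$. Since $b_0,b_1$ are holomorphic near the anti-diagonal (see \eqref{1boexp}), each factor $mb_0(a,\bar b)+b_1(a,\bar b)$ equals $m(\Delta Q(z)+\calO(\delta_m))$, so the prefactor of $T$ is $m^3(\Delta Q(z)^3+\calO(\delta_m))$ (the $b_1$-contributions being of order $m^2=\calO(m^3\delta_m)$); thus $T$ is precisely the first line of \eqref{apuu}.

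It remains to bound the seven remainders by $\calO\big(1+m(\e^{-m\Delta Q(z)\babs{h_1}^2/2}+\e^{-m\Delta Q(z)\babs{h_2}^2/2})\big)$. Distributing the weight $\e^{-m(Q(z)+Q(z+h_1)+Q(z+h_2))}$ as a product of factors $\e^{-m(Q(a)+Q(b))/2}$ over the corresponding kernels, one has $\babs{E(a,b)}\,\e^{-m(Q(a)+Q(b))/2}\le Cm^{-1}$, while \eqref{boexp} together with \eqref{vis2} gives $\babs{K_m^1(a,b)}\,\e^{-m(Q(a)+Q(b))/2}\le Cm\,\e^{-m\Delta Q(z)\babs{a-b}^2/2}$ (after replacing the value of $\Delta Q$ at the base point by $\Delta Q(z)$, at the cost of an $\calO(\log^3 m/\sqrt m)$ error in the exponent which is absorbed into $C$). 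A remainder with exactly one factor $E$ then has two surviving kernel factors, at least one of which has its two arguments differing by $h_1$ or $h_2$; keeping that Gaussian and discarding the other, the remainder is $\le Cm\,\e^{-m\Delta Q(z)\babs{h_i}^2/2}\le Cm(\e^{-m\Delta Q(z)\babs{h_1}^2/2}+\e^{-m\Delta Q(z)\babs{h_2}^2/2})$ for some $i\in\{1,2\}$, while a remainder with two or three factors $E$ is $\calO(m^{-1})=\calO(1)$. Summing $T$ and the remainders gives \eqref{apuu}.

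The main obstacle is the bookkeeping in the last two steps: verifying that all single-variable monomials in $S$ cancel and that the leftover expansion is $\calO(\delta_m^3)$ uniformly for $z\in K$ — which rests on the real-analyticity of $Q$ near the droplet and on the uniform estimates \eqref{vis2}, \eqref{1boexp}, \eqref{boexp} established above — and choosing, for each $E$-remainder, which of the two surviving Gaussian factors to keep so that the residual decay is exactly of the advertised form $\e^{-m\Delta Q(z)\babs{h_i}^2/2}$.
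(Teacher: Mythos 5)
Your proof is correct and follows essentially the same strategy as the paper: \eqref{apu} falls out of Lemma~\ref{prev} once one notes $R_{m,n,2}(z,z+h)=\babs{K_{m,n}(z,z+h)}^2\,\e^{-m(Q(z)+Q(z+h))}$, and \eqref{apuu} is obtained by replacing $K_{m,n}$ with $K_m^1$ to get the main term (the paper's $R^1_{m,3}$, your $T$), expanding the exponent by Taylor about $(z,\bar z)$, and then controlling the difference using the $\calO(m^{-1})$ estimate of Lemma~\ref{klam} together with the $\calO(m)$ and Gaussian bounds on the weighted kernels. The only cosmetic difference is that you split $K_{m,n}=K_m^1+E$ and track all $2^3-1$ remainders, where the paper invokes the usual three-term telescoping $\alpha_1\alpha_2\alpha_3-\beta_1\beta_2\beta_3=\sum(\alpha_i-\beta_i)(\cdots)$ and the single stated bound $\babs{K_{m,n}(z+h_1,z+h_2)}\e^{-m(Q(z+h_1)+Q(z+h_2))/2}\le Cm$; the outcome and the ingredients are the same.
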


\begin{proof}
The estimate \eqref{apu} follows from Lemma
\ref{prev} since
$R_{m,n,2}(z,z+h)=\babs{K_{m,n}(z,z+h)}^2
~\e^{-m(Q(z)+Q(z+h))}$.

To estimate $R_{m,n,3}(z,z+h_1,z+h_2)$  we first consider the
approximation
\begin{equation*}R_{m,3}^1(z,z+h_1,z+h_2)=K_{m}^1(z,z+h_1)K_{m}^1(z+h_1,z+h_2)
K_{m}^1(z+h_2,z)~\e^{-m(Q(z)+Q(z+h_1)+Q(z+h_2))},\end{equation*}
obtained by replacing $K_{m,n}$ by $K_m^1$ in the definition of
$R_{m,n,3}$.

In view of \eqref{1boexp} we have for $z\in K$ and
$\babs{h_1},\babs{h_2}\le M\delta_m$ that
\begin{equation}\label{apa}R_{m,3}^1(z,z+h_1,z+h_2)=m^3\lpar \Delta
Q(z)^3+\calO(\delta_m)\rpar ~\e^{m(\psi(z,\overline{z+h_1})+\psi(z+h_1,\overline{z+h_2})+
\psi(z+h_2,\bar{z})-Q(z)-Q(z+h_1)-Q(z+h_2))},\end{equation} where
$\calO$ is uniform in $z\in K$. A simple
calculation with the Taylor expansions for $Q$ at $z$ and $\psi$ at
$(z,\bar{z})$ now yields that
\begin{equation*}\begin{split}
\psi(z,\overline{z+h_1})&+\psi(z+h_1,\overline{z+h_2})+
\psi(z+h_2,\bar{z})- Q(z)-Q(z+h_1)-Q(z+h_2)=\\
&=\Delta Q(z)\lpar
h_1\bar{h}_2-\babs{h_1}^2-\babs{h_2}^2\rpar+\calO(|h|_\infty^3),\quad \text{as}\quad h\to 0,\\
\end{split}\end{equation*}
where we have put $\babs{h}_\infty=\max\{\babs{h_1},\babs{h_2}\}$.
Since the estimate is uniform for $z\in K$, we may use \eqref{apa}
to conclude that
\begin{equation*}R_{m,3}^1(z,z+h_1,z+h_2)=m^3\lpar \Delta
Q(z)^3+\calO(\delta_m)\rpar~\e^{m\Delta Q(z)\lpar
h_1\bar{h}_2-\babs{h_1}^2-\babs{h_2}^2\rpar+\calO(\log^3
m/\sqrt{m})},\quad \babs{h}_\infty \le M\delta_m,\end{equation*}
when $m\to\infty$ and again the $\calO$-terms are uniform for $z\in
K$. Combining with Lemma \ref{prev} and \eqref{boexp}, and also using the estimate $\babs{K_{m,n}(z+h_1,z+h_2)}\e^{-m(Q(z+h_1)+Q(z+h_2))/2}\le Cm$ for $\babs{h}_\infty\le M\delta_m$, $n\ge m\tau-1$, $m$ large (this follows
from Lemma \ref{klam}) we readily obtain 
\eqref{apuu}.
\end{proof}

\section{The functions $G_k$; near-diagonal behaviour.}\label{GK}
\label{ssss} In this section, we let $g$ be any sufficiently smooth
(sometimes real-valued) function on $\C$ (i.e. not necessarily
supported in $\setS_\tau^\circ\cap X$). We then form the
corresponding function $G_k$ by \eqref{gkdef}. Here $k\ge 2$ is
fixed.

 We will now analyze the function $G_k$ in a neighbourhood of the diagonal
\begin{equation*}\triangle_k=\{\lambda\1_k \in\C^k;\lambda\in\C\},\end{equation*}
where
\begin{equation*}\1_k=(1,1,\ldots,1)\in\C^k.\end{equation*}

Our results in this section state that $G_k$
vanishes identically on $\triangle_k$ and that $G_k$ is harmonic at
each point of $\triangle_k$. This depends on combinatorial identities of a type which where considered earlier in related contexts e.g. by
Soshnikov \cite{S1} and Rider--Virág \cite{RV}, \cite{RV0}. The following lemma
is equivalent to \cite{S1}, eq. (1.14), p. 1356.

\begin{lem} \label{zerodiag} For any function
$g:\C\to \C$ and any $k\ge 2$, it holds that $G_k=0$ on
$\triangle_k$.
\end{lem}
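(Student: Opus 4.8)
The plan is to evaluate $G_k$ directly on the diagonal $\triangle_k$. When $\lambda_1=\cdots=\lambda_k=\lambda$, every monomial $\prod_{l=1}^j g(\lambda_l)^{k_l}$ collapses to $g(\lambda)^{k_1+\cdots+k_j}=g(\lambda)^k$, independently of the composition $(k_1,\ldots,k_j)$. Hence from \eqref{gkdef}
\begin{equation*}
G_k(\lambda\1_k)=g(\lambda)^k\sum_{j=1}^k\frac{(-1)^{j-1}}{j}\sum_{\substack{k_1+\cdots+k_j=k\\ k_1,\ldots,k_j\ge1}}\frac{k!}{k_1!\cdots k_j!},
\end{equation*}
so the lemma reduces to the purely combinatorial identity
\begin{equation*}
a_k:=\sum_{j=1}^k\frac{(-1)^{j-1}}{j}\sum_{\substack{k_1+\cdots+k_j=k\\ k_i\ge1}}\frac{k!}{k_1!\cdots k_j!}=0\qquad(k\ge2).
\end{equation*}

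Next I would prove this identity by a generating-function computation. Working with formal power series in a variable $t$, I form $\sum_{k\ge1}a_k\,t^k/k!$. For each fixed $k$ the sums are finite, so I may interchange them, and using $\sum_{i\ge1}t^i/i!=e^t-1$ I get
\begin{equation*}
\sum_{k\ge1}a_k\frac{t^k}{k!}=\sum_{j\ge1}\frac{(-1)^{j-1}}{j}\Bigl(\sum_{i\ge1}\frac{t^i}{i!}\Bigr)^{j}=\sum_{j\ge1}\frac{(-1)^{j-1}}{j}\,(e^t-1)^{j}=\log\bigl(1+(e^t-1)\bigr)=t.
\end{equation*}
The middle step is legitimate because $e^t-1$ is a formal power series with zero constant term, so $\log(1+(e^t-1))$ is a well-defined formal power series, and it plainly equals $\log e^t=t$. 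Comparing coefficients of $t^k/k!$ yields $a_1=1$ and $a_k=0$ for all $k\ge2$. Therefore $G_k\equiv0$ on $\triangle_k$ whenever $k\ge2$ (while $G_1(\lambda_1)=g(\lambda_1)$, consistent with the formula), which is the assertion.

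I do not expect a serious obstacle; the only point requiring care is the bookkeeping between ordered compositions $(k_1,\ldots,k_j)$ and the multinomial weights — the sum must be kept over ordered $j$-tuples throughout, which is precisely what makes the inner sum equal $(e^t-1)^j$. An alternative route is to write $\sum_{k_1+\cdots+k_j=k}\frac{k!}{k_1!\cdots k_j!}=j!\,S(k,j)$ (ordered set partitions into $j$ nonempty blocks) and invoke the known identity that $\sum_{j=1}^k(-1)^{j-1}(j-1)!\,S(k,j)$ equals $1$ for $k=1$ and $0$ for $k\ge2$; this is essentially the computation of Soshnikov \cite{S1}, eq. (1.14). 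Either way the conclusion follows.
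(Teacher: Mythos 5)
Your proof is correct, and it takes a genuinely different route from the paper's. Both arguments begin the same way, collapsing the evaluation on $\triangle_k$ to the scalar identity
\begin{equation*}
a_k:=\sum_{j=1}^k\frac{(-1)^{j-1}}{j}\sum_{k_1+\cdots+k_j=k,\ k_i\ge1}\frac{k!}{k_1!\cdots k_j!}=0,\qquad k\ge 2,
\end{equation*}
but then diverge. The paper recognizes the inner multinomial sum as $j!\,S(k,j)$ (ordered set partitions, Stirling numbers of the second kind) and applies the classical recurrence expressing $S(k-1,0)$ as an alternating sum $\sum_{j=1}^k(-1)^{j-1}(j-1)!\,S(k,j)$, which vanishes for $k\ge2$. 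You instead pass to the exponential generating function, observe that $\sum_{k\ge1}a_k t^k/k!=\sum_{j\ge1}\frac{(-1)^{j-1}}{j}(\e^t-1)^j=\log\bigl(1+(\e^t-1)\bigr)=t$, and compare coefficients. Your generating-function argument is self-contained (no external reference to a Stirling recurrence needed), and it is in fact more consonant with the method the paper itself uses two lemmas later, in the proof of Lemma~\ref{zerolap}, where the product $\prod_l(\e^{tx_l}-1)$ plays exactly this role. The paper's Stirling-number argument, on the other hand, makes the combinatorial content (counting ordered set partitions) explicit. Either proof suffices; yours has the modest advantage of unifying the style of the two diagonal lemmas.
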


\begin{proof} Evidently
\begin{equation*}G_k(\lambda\1_k)=g(\lambda)^k\sum_{j=1}^k
\frac {(-1)^{j-1}} j \sum_{k_1+\ldots+k_j=k, k_1,\ldots,k_j\ge 1}
\frac {k!} {k_1!\cdots k_j!}.\end{equation*} The last sum is the
number of partitions of $k$ distinguishable elements into $j$
distinguishable, nonempty subsets. Thus (e.g. \cite{C}, Th.~9.1, p.
340)
\begin{equation*}\sum_{k_1+\cdots+k_j=k, k_1,\ldots,k_j\ge 1}
\frac {k!} {k_1!\cdots k_j!}=j!S(k,j),
\end{equation*}
where
\begin{equation*}S(k,j)=\frac 1 {j!}\sum_{r=0}^j (-1)^r {j\choose r}(j-r)^k
\end{equation*}
is the Stirling number of the second kind. Evidently $S(k,0)=0$ for
$k\ge 1$. Moreover, the well-known recurrence relation for those
Stirling numbers (see e.g. \cite{C}, Th.~8.9, (8.32)), gives
\begin{equation*}S(k-1,0)=\sum_{r=0}^{k-1} (-1)^rr!S(k,r+1)=\sum_{j=1}^k
\frac {(-1)^{j-1}} j j!S(k,j).\end{equation*} The lemma follows,
since $S(k-1,0)=0$ when $k\ge 2$.
\end{proof}

Note that the lemma is equivalent to that
\begin{equation}\label{zerosum}\sum_{j=1}^k
\frac {(-1)^{j-1}} j \sum_{k_1+\cdots+k_j=k, k_1,\ldots,k_j\ge 1}
\frac {1} {k_1!\cdots k_j!}=0,\quad k=2,3,\ldots.\end{equation}

We note the following simple, but rather useful consequence of Lemma
\ref{zerodiag}.

\begin{lem}\label{zerodiagp} Let $g\in \calC^1(\C\to \C)$ and $k\ge
2$. Then for all $\lambda\in\C$ holds
\begin{equation*}\sum_{i=1}^k (\d_i
G_k)(\lambda_1,\ldots,\lambda_k)\biggm|_{\lambda_1=\cdots=\lambda_k=\lambda}=
\sum_{i=1}^k (\dbar _i
G_k)(\lambda_1,\ldots,\lambda_k)\biggm|_{\lambda_1=\cdots=\lambda_k=\lambda}=0.\end{equation*}
\end{lem}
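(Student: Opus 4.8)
The plan is to deduce the statement from Lemma~\ref{zerodiag} by a direct differentiation argument. The key observation is that $G_k$ is symmetric under permutations of its $k$ arguments, so the gradient-sum operators $\sum_i \d_i$ and $\sum_i \dbar_i$ are precisely the operators that differentiate along the diagonal direction $\1_k$. More precisely, for $\lambda\in\C$ and the function $t\mapsto G_k((\lambda+t)\1_k)$ of a single complex variable $t$ (with $t$ and $\bar t$ treated independently), the chain rule gives
\begin{equation*}
\d_t\bigl[G_k((\lambda+t)\1_k)\bigr]=\sum_{i=1}^k (\d_i G_k)((\lambda+t)\1_k),\qquad
\dbar_t\bigl[G_k((\lambda+t)\1_k)\bigr]=\sum_{i=1}^k (\dbar_i G_k)((\lambda+t)\1_k).
\end{equation*}
So the first step is just to write down these two identities.

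The second step is to invoke Lemma~\ref{zerodiag}: the function $t\mapsto G_k((\lambda+t)\1_k)$ equals $G_k$ evaluated on the diagonal $\triangle_k$, hence is identically zero as a function of $t$ in a neighbourhood of $0$ (in fact for all $t$). Therefore both of its first-order partial derivatives $\d_t$ and $\dbar_t$ vanish at $t=0$. Combining with the chain-rule identities of the first step and setting $t=0$ (so that $\lambda+t=\lambda$) yields exactly
\begin{equation*}
\sum_{i=1}^k (\d_i G_k)(\lambda,\ldots,\lambda)=\sum_{i=1}^k (\dbar_i G_k)(\lambda,\ldots,\lambda)=0,
\end{equation*}
which is the claim. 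One should note that $G_k$ is a polynomial in $g(\lambda_1),\dots,g(\lambda_k)$ with constant coefficients, and since $g\in\calC^1$, $G_k$ is $\calC^1$ in its arguments, so the chain rule applies legitimately.

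There is essentially no obstacle here; this is a soft consequence of the vanishing on the diagonal plus symmetry. The only point requiring a line of care is justifying that differentiating the restriction to the diagonal line equals the sum of partials --- i.e. that the map $\lambda\mapsto \lambda\1_k$ composed with $G_k$ has derivative $\sum_i\d_iG_k$ --- but this is the standard multivariable chain rule applied to the linear map $t\mapsto t\1_k$, whose $i$-th component has holomorphic derivative $1$ and anti-holomorphic derivative $0$. Alternatively, one can phrase the whole argument more explicitly: differentiate the closed-form expression
\begin{equation*}
G_k(\lambda\1_k)=g(\lambda)^k\sum_{j=1}^k\frac{(-1)^{j-1}}{j}\sum_{k_1+\cdots+k_j=k}\frac{k!}{k_1!\cdots k_j!}
\end{equation*}
from the proof of Lemma~\ref{zerodiag}; the sum is zero by \eqref{zerosum} (equivalently $S(k-1,0)=0$), so the right-hand side vanishes identically in $\lambda$, and in particular its $\d$- and $\dbar$-derivatives vanish --- these derivatives are, by symmetry and the chain rule, exactly the two sums in the statement.
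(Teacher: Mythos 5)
Your proof is correct and takes essentially the same route as the paper: by Lemma~\ref{zerodiag}, $G_k(\lambda\1_k)\equiv 0$, and differentiating this identity in $\lambda$ via the chain rule immediately gives the two vanishing sums. One small caution: you call the symmetry of $G_k$ the ``key observation,'' but $G_k$ is in fact \emph{not} symmetric in its arguments (e.g.\ $G_2(\lambda_1,\lambda_2)=g(\lambda_1)^2-g(\lambda_1)g(\lambda_2)$, as noted in the proof of Lemma~\ref{zgr}), and no symmetry is needed --- the chain rule applied to the composition with the affine map $t\mapsto(\lambda+t)\1_k$ already yields $\d_t\bigl[G_k((\lambda+t)\1_k)\bigr]=\sum_i(\d_iG_k)((\lambda+t)\1_k)$ for any $\calC^1$ function, symmetric or not, since each coordinate of $t\1_k$ has holomorphic derivative $1$ and antiholomorphic derivative $0$ in $t$.
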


\begin{proof} By Lemma
\ref{zerodiag} we have that $G_k(\lambda\1_k)=0$, whence
\begin{equation*} 0=\frac \d {\d\lambda}
G_k(\lambda\1_k)=\sum_{i=1}^k (\d_i
G_k)(\lambda\1_k).\end{equation*} The statement about $\dbar$ is
analogous.
\end{proof}

We now turn to a more nontrivial fact. Let us denote by
\begin{equation*}\Delta_k=\d_1\dbar_1+\ldots+\d_k\dbar_k,\end{equation*}
the Laplacian on $\C^k$.

In the next lemma, we calculate $\Delta_k G_k$ at every
point of the diagonal $\triangle_k$ when $k\ge 2$. When $k\ge 3$,
we shall see that $\Delta_k G_k$ vanishes on the diagonal, which means that
$G_k$ is nearly harmonic close to the diagonal.

\begin{lem}\label{zerolap} Let $g\in\calC^2(\C\to\R)$ and $k\ge 2$.
Then for all $\lambda\in\C$ we have
\begin{equation*}(\Delta_2
G_2)(\lambda_1,\lambda_2)\biggm|_{\lambda_1=\lambda_2=\lambda}=\babs{\nabla
g(\lambda)}^2/2,\end{equation*} and
\begin{equation*}
(\Delta_k
G_k)(\lambda_1,\ldots,\lambda_k)\biggm|_{\lambda_1=\ldots=\lambda_k=\lambda}=0,\quad
k=3,4,\ldots\end{equation*}
\end{lem}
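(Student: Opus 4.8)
The plan is to differentiate the explicit expression \eqref{gkdef} for $G_k$ termwise and then restrict to the diagonal $\triangle_k$. Fix $\lambda\in\C$. A single monomial $\prod_{l=1}^{j}g(\lambda_l)^{k_l}$ coming from a composition $k_1+\dots+k_j=k$ with all $k_i\ge1$ involves only the variables $\lambda_1,\dots,\lambda_j$, so $\partial_i\bar\partial_i$ annihilates it when $i>j$; and when $i\le j$ both $\partial_i$ and $\bar\partial_i$ act on the single factor $g(\lambda_i)^{k_i}$, giving
\[
\partial_i\bar\partial_i\prod_{l=1}^{j}g(\lambda_l)^{k_l}
=\Bigl(k_i(k_i-1)\,g(\lambda_i)^{k_i-2}\,\partial g(\lambda_i)\,\bar\partial g(\lambda_i)+k_i\,g(\lambda_i)^{k_i-1}\,\partial\bar\partial g(\lambda_i)\Bigr)\prod_{l\ne i}g(\lambda_l)^{k_l}.
\]
Evaluating at $\lambda_1=\dots=\lambda_k=\lambda$, using that $g$ is real-valued (so $\partial g\,\bar\partial g=|\partial g|^2=\tfrac14|\nabla g|^2$) and that $\partial\bar\partial g=\Delta g$, this becomes $\bigl(k_i(k_i-1)|\partial g|^2+k_i\,g\,\Delta g\bigr)g^{k-2}$ at $\lambda$. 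Summing over $i=1,\dots,j$ and using $\sum_i k_i=k$ gives
\[
\sum_{i=1}^{j}\partial_i\bar\partial_i\prod_{l=1}^{j}g(\lambda_l)^{k_l}\Bigm|_{\triangle_k}
=\Bigl(\sum_{i=1}^{j}k_i^2-k\Bigr)g^{k-2}|\partial g|^2+k\,g^{k-1}\Delta g .
\]

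I would then substitute this into \eqref{gkdef}. The two pieces $-k\,g^{k-2}|\partial g|^2$ and $k\,g^{k-1}\Delta g$ do not depend on the chosen composition, so upon summing over all compositions and all $j\le k$ they get multiplied by the single scalar $\sum_{j=1}^{k}\tfrac{(-1)^{j-1}}{j}\sum_{k_1+\dots+k_j=k}\tfrac{k!}{k_1!\cdots k_j!}$, which equals $S(k-1,0)=0$ for $k\ge2$ — exactly the identity used in the proof of Lemma \ref{zerodiag}. Hence
\[
(\Delta_kG_k)(\lambda\1_k)=A_k\,g(\lambda)^{k-2}\,|\partial g(\lambda)|^2,\qquad
A_k:=\sum_{j=1}^{k}\frac{(-1)^{j-1}}{j}\sum_{\substack{k_1+\dots+k_j=k\\ k_i\ge1}}\frac{k!}{k_1!\cdots k_j!}\sum_{i=1}^{j}k_i^2 .
\]
Since the inner sum is symmetric under permuting $(k_1,\dots,k_j)$, each index $i$ contributes the same amount, so $\sum_i k_i^2$ may be replaced by $j\,k_1^2$, the factor $1/j$ cancels, and $A_k=\sum_{j\ge1}(-1)^{j-1}\sum_{k_1+\dots+k_j=k}\tfrac{k!}{k_1!\cdots k_j!}k_1^2$.

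It remains to evaluate $A_k$, which I would do with exponential generating functions: with $E(x)=e^x-1$ and $\sum_{n\ge1}n^2x^n/n!=x\,\partial_x\bigl(x\,\partial_x e^x\bigr)=x(1+x)e^x$, one gets
\[
A_k=k!\,[x^k]\;x(1+x)e^x\sum_{j\ge1}\bigl(-E(x)\bigr)^{j-1}
=k!\,[x^k]\,\frac{x(1+x)e^x}{1+E(x)}=k!\,[x^k]\bigl(x+x^2\bigr),
\]
so that $A_2=2$ and $A_k=0$ for $k\ge3$. This yields $(\Delta_2G_2)(\lambda,\lambda)=2|\partial g(\lambda)|^2=\tfrac12|\nabla g(\lambda)|^2$ and $(\Delta_kG_k)(\lambda\1_k)=0$ for $k\ge3$, as claimed. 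The two places needing care are the bookkeeping in the termwise differentiation — checking that both $\partial_i$ and $\bar\partial_i$ land on the same factor $g(\lambda_i)^{k_i}$ — and the generating-function evaluation of $A_k$; once Lemma \ref{zerodiag} is available, the vanishing of the $\Delta g$-contribution is automatic, so the whole difficulty is concentrated in identifying the coefficient $A_k$.
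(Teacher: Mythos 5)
Your argument is correct, and it follows the same overall plan as the paper's proof of Lemma~\ref{zerolap}: termwise differentiation of \eqref{gkdef}, evaluation on the diagonal $\triangle_k$, appeal to the identity behind Lemma~\ref{zerodiag} to eliminate the $\Delta g$-contribution (and, in your bookkeeping, the $-k$ part of $\sum_i k_i^2-k$), followed by an exponential-generating-function evaluation of the remaining coefficient. Where you genuinely diverge --- and simplify --- is in that last step. The paper keeps the $1/j$ factor in place, works with the $j$-variable generating function $H_j(t;x_1,\ldots,x_j)=\prod_l(e^{tx_l}-1)$, applies the Laplacian $\Delta_j^{\R}$ at $x_1=\cdots=x_j=1$ (producing the factor $j$ that cancels $1/j$), and then identifies the finite geometric sum as $t^2\bigl(1-(1-e^t)^k\bigr)$, reading off the $k$-th Taylor coefficient. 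You instead cancel the $1/j$ at the outset via the symmetrization that replaces $\sum_i k_i^2$ by $j\,k_1^2$, then pass directly to a single-variable EGF and sum the geometric series to infinity --- legitimate, since for each fixed $k$ only the terms with $j\le k$ contribute to $[x^k]$ --- arriving at the clean closed form $x+x^2$. Your version avoids the auxiliary $x$-variables and the $\R^j$-Laplacian entirely, so it is a bit more economical, although the two computations encode exactly the same cancellation. One cosmetic remark: you use $|\partial g|^2$ where the paper uses $|\bar\partial g|^2$; since $g$ is real-valued these coincide and both equal $\tfrac14|\nabla g|^2$, so the normalizations agree.
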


\begin{proof} Fix a number $k\ge 2$. Let $1\le j\le k$, and let $k_1,\ldots, k_j$ be
positive integers such that $k_1+\ldots+k_j=k$. Since, for $1\le
r\le j$,
\begin{equation*}\frac {\d^2} {\d \lambda_r \d \bar{\lambda_r}}\lpar
\prod_{l=1}^j g(\lambda_l)^{k_l}\rpar=k_r (k_r-1)\cdot \prod_{l=1,
l\ne r}^j g(\lambda_l)^{k_l}\cdot g(\lambda_r)^{k_r-2} \cdot \d
g(\lambda_r)\cdot \dbar g(\lambda_r)+ k_r\cdot \prod_{l=1, l\ne r}^j
g(\lambda_l)^{k_l}\cdot g(\lambda_r)^{k_r-1}\cdot \Delta
g(\lambda_r),
\end{equation*}
we get (with $\1_k=(1,\ldots,1)\in\C^k$)
\begin{equation}\label{busch}\begin{split}
(\Delta_k G_k)(\lambda\1_k)&= \sum_{j=1}^k\frac {(-1)^{j-1}} j
\sum_{k_1+\ldots+k_j=k,k_1,\ldots,k_j\ge 1}\frac {k!}
{k_1!\cdots k_j!} \times\\
& \times \lpar g(\lambda)^{k-2}\babs{\dbar g(\lambda)}^2\sum_{r=1}^j
k_r(k_r-1)+ g(\lambda)^{k-1}\Delta g(\lambda)\sum_{r=1}^j k_r\rpar.\\
\end{split}
\end{equation}
Since $k_1+\ldots+k_j=k$, the right hand side in \eqref{busch}
simplifies to
\begin{equation}\label{f1}\begin{split}&g(\lambda)^{k-2}\babs{\dbar
g(\lambda)}^2 \sum_{j=1}^k\frac {(-1)^{j-1}} j
\sum_{k_1+\ldots+k_j=k,\, k_1,\ldots, k_j\ge 1}
\frac {k!(k_1(k_1-1)+\ldots+k_j(k_j-1))} {k_1!\cdots k_j!}+\\
&+g(\lambda)^{k-1}\Delta g(\lambda)\sum_{j=1}^k\frac {(-1)^{j-1}} j
\sum_{k_1+\ldots+k_j=k,\, k_1,\ldots, k_j\ge 1}\frac {k\cdot k!}
{k_1!\cdots k_j!}.\\
\end{split}
\end{equation}
Here the last double sum is zero, by \eqref{zerosum}, and \eqref{f1}
simplifies to
\begin{equation}\label{f2}g(\lambda)^{k-2}\babs{\dbar g(\lambda)}^2\sum_{j=1}^k\frac {(-1)^{j-1}} j
\sum_{k_1+\ldots+k_j=k,k_1,\ldots,k_j\ge 1} \frac
{k!(k_1(k_1-1)+\ldots+k_j(k_j-1))} {k_1!\cdots k_j!}.\end{equation}
In order to finish the proof we must thus show that $S_2=2$ and
$S_k=0$ for all $k\ge 3$ where $S_k$ denotes the sum
\begin{equation}\label{skdef}S_k=\sum_{j=1}^k\frac {(-1)^{j-1}} j
\sum_{k_1+\ldots+k_j=k,k_1,\ldots,k_j\ge 1} \frac
{k!(k_1(k_1-1)+\ldots+k_j(k_j-1))} {k_1!\cdots k_j!}.\end{equation}
The case $k=2$ is trivial, so we assume that $k\ge 3$. To this end,
we shall consider exponential generating functions of the form
\begin{equation}\label{rel}
H_j(t;x_1,\ldots,x_j)=\prod_{l=1}^j\lpar \e^{tx_l}-1\rpar
=\sum_{k_1=1}^\infty \frac {(x_1 t)^{k_1}} {k_1!}\cdots
\sum_{k_j=1}^\infty \frac {(x_j t)^{k_j}} {k_j!}.
\end{equation}
The relevance of this generating function is seen when we expand the
product as a power series in $t$,
\begin{equation*}H_j(t;x_1,\ldots,x_j)=\sum_{k=1}^\infty
\lpar \sum_{k_1+\ldots+k_j=k,k_1,\ldots,k_j\ge 1} \frac
{k!x_1^{k_1}\cdots x_j^{k_j}} {k_1!\cdots k_j!}\rpar \frac {t^k}
{k!}.
\end{equation*}
Considering the $x_j$:s as real variables and denoting
\begin{equation*}\Delta_{j}^\R=\frac {\d^2} {\d x_1^2}+\ldots+\frac {\d^2}
{\d x_j^2},\end{equation*} the Laplacian on $\R^j$, we thus obtain
\begin{equation}\label{f3}\Delta_{j}^\R H_j(t;1,\ldots,1)=
\sum_{k=1}^\infty\lpar \sum_{k_1+\ldots+k_j=k,k_1,\ldots,k_j\ge 1}
\frac {k!(k_1(k_1-1)+\ldots+k_j(k_j-1))} {k_1!\cdots k_j!}\rpar
\frac {t^k} {k!}.\end{equation} On the other hand, differentiating
the product in \eqref{rel} and evaluating at $x_1=\ldots=x_j=1$
yields
\begin{equation}\label{gur}\Delta_{j}^\R H_j(t;1,\ldots,1)=jt^2
\e^{t}(\e^t-1)^{j-1},\end{equation} Differentiating \eqref{f3} $k$
times with respect to $t$ and evaluating at $t=0$, we obtain the
result that
\begin{equation*}\sum_{k_1+\ldots+k_j=k,k_1,\ldots,k_j\ge 1}
\frac {k!(k_1(k_1-1)+\ldots+k_j(k_j-1))} {k_1!\cdots k_j!}= \frac
{\diff^k} {\diff t^k}\lpar jt^2\e^t\lpar \e^t-1\rpar^{j-1}
\rpar\biggm|_{t=0}.
\end{equation*}
In view of \eqref{skdef}, this implies that
\begin{equation}\label{ssoo}S_k=\frac {\diff^k} {\diff t^k}\lpar
\sum_{j=1}^k
(-1)^{j-1}t^2\e^t\lpar\e^t-1\rpar^{j-1}\rpar\biggm|_{t=0} =\frac
{\diff^k} {\diff t^k} \lpar t^2\lpar 1-\lpar
1-\e^t\rpar^k\rpar\rpar\biggm|_{t=0}.
\end{equation}
But since $1-\e^t=-(t+t^2/2!+t^3/3!+\ldots)$, it is seen that the
coefficients $a_l$ in the expansion
\begin{equation*}t^2\lpar 1-\lpar 1-\e^t\rpar^k\rpar=\sum_{l=0}^\infty
a_lt^l\end{equation*} must vanish whenever $l\ne 2$ and $l<k+2$. In
particular, if, as we have assumed, $k$ is at least $3$, then we
have $a_k=0$, which by \eqref{ssoo} implies that $S_k=0$. The proof
is finished.
\end{proof}

In addition to the Laplacian $(\Delta_k G_k)(\lambda\1_k)$, we will
also need to consider functions of the form
\begin{equation}\label{zkdef}Z_k(\lambda)=\sum_{i<j}(\d_i\dbar_j
G_k)(\lambda\1_k),\qquad k\ge 2.\end{equation} The following lemma
is now easy to prove.

\begin{lem}\label{zgr} We have that $Z_2(\lambda)=-\babs{\dbar g(\lambda)}^2$
while $Z_k$ is pure imaginary when $k\ge 3$.
\end{lem}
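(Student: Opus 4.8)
The plan is to dispose of the case $k=2$ by a direct computation and to reduce the case $k\ge 3$ to the vanishing of $(\Delta_k G_k)(\lambda\1_k)$ already established in Lemma \ref{zerolap}. For $k=2$ one expands the definition \eqref{gkdef}, finding $G_2(\lambda_1,\lambda_2)=g(\lambda_1)^2-g(\lambda_1)g(\lambda_2)$, so that the only pair with $i<j$ is $(1,2)$ and
\begin{equation*}Z_2(\lambda)=(\d_1\dbar_2 G_2)(\lambda,\lambda)=-\d g(\lambda)\,\dbar g(\lambda).\end{equation*}
Since $g$ is real-valued we have $\d g=\overline{\dbar g}$, hence $\d g\,\dbar g=\babs{\dbar g}^2$ and $Z_2(\lambda)=-\babs{\dbar g(\lambda)}^2$, as claimed.

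For $k\ge 3$ the first point is that $G_k$, being a polynomial with real coefficients in the real quantities $g(\lambda_1),\dots,g(\lambda_k)$, is a real-valued function on $\C^k$; consequently $\overline{\d_i\dbar_j G_k}=\d_j\dbar_i G_k$ for all $i,j$. Combined with the definition \eqref{zkdef} this gives
\begin{equation*}\sum_{i\ne j}(\d_i\dbar_j G_k)(\lambda\1_k)=\sum_{i<j}\lpar(\d_i\dbar_j G_k)+(\d_j\dbar_i G_k)\rpar(\lambda\1_k)=Z_k(\lambda)+\overline{Z_k(\lambda)}=2\,\re Z_k(\lambda).\end{equation*}
On the other hand, by Lemma \ref{zerodiag} the function $\lambda\mapsto G_k(\lambda\1_k)$ vanishes identically, so applying $\d/\d\lambda$ and then $\d/\d\bar\lambda$ and using the chain rule yields
\begin{equation*}0=\frac{\d^2}{\d\lambda\,\d\bar\lambda}G_k(\lambda\1_k)=\sum_{i,j=1}^k(\d_i\dbar_j G_k)(\lambda\1_k)=(\Delta_k G_k)(\lambda\1_k)+2\,\re Z_k(\lambda).\end{equation*}
By Lemma \ref{zerolap}, $(\Delta_k G_k)(\lambda\1_k)=0$ for $k\ge 3$, whence $\re Z_k(\lambda)=0$, i.e. $Z_k$ is pure imaginary.

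The argument is essentially bookkeeping once Lemmas \ref{zerodiag} and \ref{zerolap} are in hand; the only place requiring a little care is keeping track of which derivatives are holomorphic and which are anti-holomorphic and exploiting the reality of $G_k$ to rewrite $\sum_{i\ne j}$ as $2\re Z_k$, so no serious obstacle is anticipated. As a consistency check, the same identity at $k=2$ gives $2\re Z_2(\lambda)=-(\Delta_2 G_2)(\lambda\1_2)=-\babs{\nabla g(\lambda)}^2/2$, which matches the direct computation above since $\babs{\nabla g}^2=4\babs{\dbar g}^2$.
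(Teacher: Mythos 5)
Your proof is correct and takes essentially the same route as the paper's: a direct computation for $k=2$, and for $k\ge 3$ the observation that $0=\d_\lambda\dbar_\lambda G_k(\lambda\1_k)=(\Delta_k G_k)(\lambda\1_k)+2\re Z_k(\lambda)$, combined with Lemma \ref{zerolap}. The only cosmetic differences are that you spell out the reality argument giving $\sum_{i\ne j}\d_i\dbar_j G_k=2\re Z_k$ and the $k=2$ computation explicitly, both of which the paper leaves implicit.
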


\begin{proof} Again the case $k=2$ is trivial because
$G_2(\lambda_1,\lambda_2)=g(\lambda_1)^2-g(\lambda_1)g(\lambda_2)$.
When $k\ge 3$ we may use lemmas \ref{zerodiag} and \ref{zerolap} to
calculate
\begin{equation*}0=\Delta_\lambda\{G_k(\lambda\1_k)\}=(\Delta_k
G_k)(\lambda\1_k)+\sum_{i\ne j}(\d_i\dbar_j G_k)(\lambda\1_k)=2\re
Z_k(\lambda),\end{equation*} which shows that $Z_k$ is pure
imaginary.
\end{proof}

\section{An expansion formula for the cumulants}

During this section, we keep a \textit{real valued} function
$g\in\coity(\setS_\tau^\circ\cap X)$ fixed. We will reduce the proof
of Th.~\ref{mthm} to the proof of another statement (Th.~\ref{mth}
below), which turns out to be easier to handle, and which we prove
in the Sect. \ref{puuh}, after a discussion of some basic estimates
for $K_{m,n}$ in Sect. \ref{poh}.

To get started, note that an expression for the cumulant
$\calC_{m,n,k}(g)$ was given above in eq. \eqref{fund}. It will be
important to note that \eqref{fund} and the reproducing property of
$K_{m,n}$ shows that we may also represent the cumulant
$\calC_{m,n,k}(g)$ as an integral over $\C^{k+1}$,
\begin{equation}\label{rewrite}
\calC_{m,n,k}(g)= \int_{\C^{k+1}}G_k(\lambda_1,\ldots,\lambda_k)~
R_{m,n,k+1}(\lambda,\lambda_1,\ldots,\lambda_k)~
\dA_{k+1}(\lambda,\lambda_1,\ldots,\lambda_k),\end{equation}
where $G_k$ and $R_{m,n,k+1}$ are given by \eqref{gkdef} and
\eqref{rmnkdef} respectively. Indeed, this simple trick of
introducing an extra parameter $\lambda$ into the integral will turn
out to be of fundamental importance for our proof.

In the foregoing section, we were able to give a good description of
$G_k(\lambda_1,\ldots,\lambda_k)$ for points near the diagonal
$\lambda_1=\ldots=\lambda_k=\lambda$. For such points it is natural
to write $h_i=\lambda_i-\lambda$ (where the $\babs{h_i}$ are small)
and to work in the coordinate system $(\lambda,h_1,\ldots,h_k)$.
Indeed, this coordinate system is advantageous for all our purposes.
Note that the volume element is invariant with respect to this
change of coordinates,
\begin{equation*}\dA_{k+1}(\lambda,\lambda_1,\ldots,\lambda_k)=\dA_{k+1}(\lambda,h_1,\ldots,h_k),\end{equation*}
and that the reproducing property of $K_{m,n}$ is reflected by the
fact that
\begin{equation*}u(\lambda)=\int_\C
u(h)~K_{m,n}(\lambda,\lambda+h)~\e^{-mQ(\lambda+h)}~\dA(h),\quad u\in
H_{m,n}.\end{equation*}

We thus get that with $h=(h_1,\ldots,h_k)$ and $\1_k=(1,\ldots,1)$,
we can write \eqref{rewrite} as
\begin{equation}\label{rw2}\calC_{m,n,k}(g)=
\int_{\C^{k+1}}G_k(\lambda\1_k+h)~
R_{m,n,k+1}(\lambda,\lambda\1_k+h)~\dA_{k+1}(\lambda,h).\end{equation}

We now fix $\lambda\in\C$ and use Taylor's formula applied to the
function \begin{equation*}\C^k\to \R\quad :\quad h\mapsto
G_k(\lambda\1_k+h).\end{equation*} Since $G_k(\lambda\1_k)=0$ by
Lemma \ref{zerodiag}, the Taylor series at $h=0$ can be written
\begin{equation}\label{t1}G_k(\lambda\1_k+h)\sim \sum_{j=1}^\infty
T_j(\lambda,h),\end{equation} where, in the multi-index notation,
\begin{equation*}T_j(\lambda,h)=\sum_{\babs{\alpha+\beta}=j}\lpar \d^\alpha\dbar^\beta
G_k\rpar (\lambda\1_k)\frac {h^\alpha\bar{h}^\beta}
{\alpha!\beta!}.\end{equation*} Note that if $\lambda\not\in \supp
g$, then $G_k$ vanishes identically in a neighbourhood of
$\lambda\1_k$, and so $T_j(\lambda,h)=0$ for all $h\in\C^k$. Thus
the right hand side in \eqref{t1} is identically zero when
$\lambda\not\in\supp g$.

Let us write $|h|_\infty=\max\{\babs{h_1},\ldots,\babs{h_k}\}$. It
will turn out to be sufficient to consider Taylor series of degree
up to two. We thus put
\begin{equation}\label{t2}G_k(\lambda\1_k+h)=T_1(\lambda,h)+T_2(\lambda,h)+r(\lambda,h),
\quad \text{where}\quad r(\lambda,h)=\calO(\babs{h}_\infty^3)\quad
\text{as}\quad h\to 0.\end{equation} The idea is now to replace
$G_k(\lambda\1_k+h)$ by the right hand side in \eqref{t2} in the
integral \eqref{rw2}. To simplify matters, we first have the
following lemma.

\begin{lem} For all $k\ge 2$ holds
\begin{equation*}\int_{\C^{k+1}}T_1(\lambda,h)~
R_{m,n,k+1}(\lambda,\lambda\1_k+h)~\dA_{k+1}(\lambda,h)=0.\end{equation*}
\end{lem}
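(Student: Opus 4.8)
The statement says the first-order Taylor term $T_1$ integrates to zero against the cyclic product $R_{m,n,k+1}$. The plan is to exploit the reproducing property of $K_{m,n}$ together with the structure of $T_1$. Recall
\begin{equation*}T_1(\lambda,h)=\sum_{i=1}^k\lpar(\d_iG_k)(\lambda\1_k)h_i+(\dbar_iG_k)(\lambda\1_k)\bar h_i\rpar.\end{equation*}
The key observation is that in the coordinate system $(\lambda,h)$, with $\lambda_i=\lambda+h_i$, the cyclic product factors as
\begin{equation*}R_{m,n,k+1}(\lambda,\lambda\1_k+h)=K_{m,n}(\lambda,\lambda+h_1)\e^{-mQ(\lambda+h_1)/2}\cdots\e^{-mQ(\lambda)/2},\end{equation*}
and integrating out a single variable $h_i$ against $K_{m,n}(\lambda+h_{i-1},\lambda+h_i)K_{m,n}(\lambda+h_i,\lambda+h_{i+1})\e^{-mQ(\lambda+h_i)}$ reproduces $K_{m,n}(\lambda+h_{i-1},\lambda+h_{i+1})$ — provided the factor multiplying it in $h_i$ (here a constant, or $h_i$, or $\bar h_i$) is a \emph{polynomial in $h_i$ of degree $\le n-1$}, which it is. Wait — one must be careful: the reproducing property reproduces $u(\lambda+h_{i-1})$ only when the function $u(\cdot)$ against which we test is in $H_{m,n}$, i.e. is a polynomial of degree $\le n-1$ in that variable. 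A constant and the monomial $h_i$ are fine. The antiholomorphic monomial $\bar h_i$ is \emph{not} holomorphic, so that piece needs a conjugate-reproducing identity: $\overline{u(\lambda)}=\int\overline{u(h)}\,\overline{K_{m,n}(\lambda,\lambda+h)}\,\e^{-mQ(\lambda+h)}\dA(h)$, equivalently testing $K_{m,n}(\cdot,\lambda+h_i)$ (which is holomorphic in the first slot) against $\bar h_i$ — here one uses that $K_{m,n}(z,w)$ is holomorphic in $z$ and antiholomorphic in $w$, so integrating $\bar h_i$ against the two kernel factors sharing the variable $\lambda+h_i$ picks out the correct term.

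So the scheme is: write $\int T_1\,R_{m,n,k+1}\dA_{k+1}$ as a sum over $i$ of two terms, one with coefficient $(\d_iG_k)(\lambda\1_k)$ and factor $h_i$, one with coefficient $(\dbar_iG_k)(\lambda\1_k)$ and factor $\bar h_i$. In each term, hold all variables except $h_i$ and $\lambda$ fixed and integrate out $h_i$ first using the (holomorphic, resp. antiholomorphic) reproducing property; this collapses the $(k+1)$-fold cyclic product $K_{m,n}(\lambda,\lambda+h_1)\cdots K_{m,n}(\lambda+h_k,\lambda)$ into a $k$-fold cyclic product where the index $i$ has been removed, but \emph{multiplied by the factor $h_i$ or $\bar h_i$ evaluated appropriately} — no, more carefully: integrating $h_i\,K_{m,n}(\lambda+h_{i-1},\lambda+h_i)K_{m,n}(\lambda+h_i,\lambda+h_{i+1})\e^{-mQ(\lambda+h_i)}\dA(h_i)$ gives (by reproducing the polynomial $z\mapsto(z-\lambda)$, wait that is degree one, fine, but we need it in $H_{m,n}$ which requires $n\ge 2$) $\,(\lambda+h_{i-1}-\lambda)K_{m,n}(\lambda+h_{i-1},\lambda+h_{i+1})=h_{i-1}K_{m,n}(\lambda+h_{i-1},\lambda+h_{i+1})$.

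The upshot: each of the $2k$ terms reduces, after this single integration, to $(\d_iG_k)(\lambda\1_k)$ or $(\dbar_iG_k)(\lambda\1_k)$ times an integral of $R_{m,n,k}$-type over the remaining variables, and crucially the derivative coefficients $(\d_iG_k)(\lambda\1_k)$ and $(\dbar_iG_k)(\lambda\1_k)$ only enter through the sums $\sum_i(\d_iG_k)(\lambda\1_k)$ and $\sum_i(\dbar_iG_k)(\lambda\1_k)$ (by the cyclic symmetry of $R_{m,n,k+1}$, the $h$-dependent part left over after integrating out $h_i$ is the same for every $i$). By Lemma \ref{zerodiagp}, both of these sums vanish identically in $\lambda$. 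Hence the whole integral is zero. The main obstacle — and the place the plan could go wrong — is the bookkeeping in the antiholomorphic case: one must verify that $\bar h_i$ is correctly absorbed by the conjugate reproducing identity against the right kernel factor, and that the degree constraint ($n\ge 2$, automatic since $n\sim m\tau\to\infty$) and the integrability of $R_{m,n,k+1}$ (so Fubini applies) are in force; both are routine given the exponential decay of the correlation kernel, but they must be stated. Finally one should note the edge case $k=2$, where the cyclic product is $K_{m,n}(\lambda,\lambda+h_1)K_{m,n}(\lambda+h_1,\lambda+h_2)K_{m,n}(\lambda+h_2,\lambda)$ and the argument is identical.
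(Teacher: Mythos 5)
There is a genuine gap. You integrate out $h_i$ --- the variable carrying the Taylor monomial --- from the cyclic product; the paper instead integrates out the \emph{other} variables $h_j$, $j\neq i$. Your version runs into a degree obstruction. The reproducing property
\begin{equation*}
\int_\C u(\lambda_i)\,K_{m,n}(\lambda_{i-1},\lambda_i)\,\e^{-mQ(\lambda_i)}\dA(\lambda_i)=u(\lambda_{i-1})
\end{equation*}
holds for $u\in H_{m,n}$, i.e.\ for analytic polynomials of degree at most $n-1$. In your collapsing step you need to take $u(\lambda_i)=(\lambda_i-\lambda)\,K_{m,n}(\lambda_i,\lambda_{i+1})$. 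You correctly note that the monomial $(\lambda_i-\lambda)$ has degree $1$, but the relevant degree is that of the \emph{full} function of $\lambda_i$: since $K_{m,n}(\lambda_i,\lambda_{i+1})$ already has degree $n-1$ in $\lambda_i$, the product has degree $n$, so $u\notin H_{m,n}$. The integral therefore produces the orthogonal projection of $u$ onto $H_{m,n}$ evaluated at $\lambda_{i-1}$, not $u(\lambda_{i-1})$; it equals $h_{i-1}K_{m,n}(\lambda_{i-1},\lambda_{i+1})$ only up to a nonzero correction coming from the degree-$n$ component (proportional to $\phi_{n+1}(\lambda_{i-1})\overline{\phi_n(\lambda_{i+1})}$ for the next orthonormal polynomial $\phi_{n+1}$). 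Independently of this, even if the claimed identity held, the leftover factor $h_{i-1}$ depends on $i$ --- indeed for $i=1$ it is $h_0=0$ since $\lambda_0=\lambda$ --- so your assertion that the $h$-dependent residue ``is the same for every $i$'' is false, and you cannot factor out the sum $\sum_i(\d_iG_k)(\lambda\1_k)$.

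The paper's route avoids both problems. For each $i$, the factor $(\d_iG_k)(\lambda\1_k)h_i$ depends only on $(\lambda,h_i)$, so one integrates out the \emph{other} variables $h_j$, $j\neq i$: each such $\lambda_j$ appears only in the two adjacent kernel factors, and the reproducing identity applied with the test polynomial $K_{m,n}(\cdot,\lambda_{j+1})$ (degree exactly $n-1$, no extra monomial, hence legitimately in $H_{m,n}$) collapses the $(k+1)$-cycle down to $R_{m,n,2}(\lambda,\lambda+h_i)$. After relabeling $h_i\mapsto h_1$, every $i$ gives the same kernel integral, which factors out, and Lemma~\ref{zerodiagp} ($\sum_i(\d_iG_k)(\lambda\1_k)=0$) kills the sum. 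Finally, because $g$ is taken real-valued so is $G_k$, whence $\sum_i(\dbar_iG_k)(\lambda\1_k)\bar h_i=\overline{\sum_i(\d_iG_k)(\lambda\1_k)h_i}$; since $R_{m,n,2}$ is real, the $\bar h_i$-contribution is just the complex conjugate of the $h_i$-contribution, so no separate ``antiholomorphic reproducing identity'' is needed.
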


\begin{proof} First note that
\begin{equation}\label{int0}T_1(\lambda,h)=2\re \sum_{i=1}^k (\d_i
G_k)(\lambda\1_k)h_i.\end{equation} Integrating termwise in
\eqref{int0} with respect to the measure
$R_{m,n,k+1}(\lambda,\lambda\1_k+h)\dA_{k+1}(\lambda,h)$ and
observing that the terms on the right hand side of \eqref{int0}
depends only on two variables, the reproducing property of $K_{m,n}$
shows that, for $i=1,\ldots,k$,
\begin{equation*}\begin{split}\int_{\C^{k+1}}(\d_i
G_k)(\lambda\1_k)~R_{m,n,k+1}(\lambda,\lambda\1_k+h)~
h_i~\dA_{k+1}(\lambda,h)=\int_{\C^2}(\d_i
G_k)(\lambda\1_k)~R_{m,n,2}(\lambda,\lambda+h_1)~h_1~\dA_2(\lambda,h_1),\\
\end{split}\end{equation*} and so we can replace the integral in \eqref{int0}
by an integral over $\C^2$ (since $R_{m,n,2}$ is real-valued):
\begin{equation*}\begin{split}&\int_{\C^{k+1}}T_1(\lambda,h)R_{m,n,k+1}(\lambda,\lambda\1_k+h)\dA_{k+1}(\lambda,h)=
2\re \int_{\C^2}\lpar \sum_{i=1}^k (\d_i G_k)(\lambda\1_k)\rpar
R_{m,n,2}(\lambda,\lambda+h_1)h_1\dA_2(\lambda,h_1).\\
\end{split}
\end{equation*}
The last integral vanishes by Lemma \ref{zerodiagp}.
\end{proof}

We have shown now shown that
\begin{equation*}\calC_{m,n,k}(g)=\int_{\C^{k+1}}\lpar
T_2(\lambda,h)+r(\lambda,h)\rpar~
R_{m,n,k+1}(\lambda,\lambda\1_k+h)~\dA_{k+1}(\lambda,h).\end{equation*}

To simplify this expression further, we will first look more closely
at
\begin{equation*}T_2(\lambda,h)=\sum_{\babs{\alpha+\beta}=2}\lpar \d^\alpha\dbar^\beta
G_k\rpar (\lambda\1_k) \frac {h^\alpha\bar{h}^\beta}
{\alpha!\beta!},\end{equation*} which we write in the form
\begin{equation*}\begin{split}T_2(\lambda,h)&=
\frac 1 2\sum_{i,j=1}^k(\d_i\d_j
G_k)(\lambda\1_k)h_ih_j+
\frac 1 2 \sum_{i,j=1}^k (\dbar_i\dbar_j)G_k(\lambda\1_k)\bar{h}_i\bar{h}_j
+\sum_{i,j=1}^k (\d_i\dbar_j G_k)(\lambda\1_k)h_i\bar{h}_j=\\
&=\re \sum_{i=1}^k (\d_i^2 G_k)(\lambda\1_k)h_i^2+\re \sum_{i\ne j}
(\d_i\d_j G_k)(\lambda\1_k)h_i h_j+\\
&+\sum_{i=1}^k(\d_i\dbar_i G_k)(\lambda\1_k)\babs{h_i}^2+
2\re\sum_{i<j}(\d_i\dbar_j G_k)(\lambda\1_k)h_i\bar{h}_j.\\
\end{split}
\end{equation*}
Using the reproducing property of $K_{m,n}$, it yields (note that
$R_{m,n,k}$ is \textit{not real-valued} if $k\ge 3$)
\begin{equation*}\begin{split}&\int_{\C^{k+1}}T_2(\lambda,h)
R_{m,n,k+1}(\lambda,\lambda\1_k+h)~\dA_{k+1}(\lambda,h)=\\
&=\int_{\C^3}\re\lpar \sum_{i\ne j} (\d_i\d_j
G_k)(\lambda\1_k)h_1h_2\rpar
R_{m,n,3}(\lambda,\lambda+h_1,\lambda+h_2)~\dA_3(\lambda,h_1,h_2)+\\
&+\re \int_{\C^2}\lpar \sum_{i=1}^k (\d_i^2 G_k)(\lambda\1_k)\rpar
h_1^2 R_{m,n,2}(\lambda,\lambda+h_1)~\dA_2(\lambda,h_1)+
\\
&+2\int_{\C^3}\re\lpar \sum_{i< j} (\d_i\dbar_j G_k)(\lambda\1_k)
h_1\bar{h}_2\rpar R_{m,n,3}(\lambda,\lambda+h_1,\lambda+h_2)~\dA_3(\lambda,h_1,h_2)+\\
&+\int_{\C^2}\sum_{i=1}^k \lpar (\d_i\dbar_i
G_k)(\lambda\1_k)\rpar\babs{h_1}^2
R_{m,n,2}(\lambda,\lambda+h_1)~\dA_2(\lambda,h_1).\\
\end{split}
\end{equation*}
Let us now introduce some notation. Recall that
\begin{equation*}(\Delta_k G_k)(\lambda\1_k)=\sum_{i=1}^k
(\d_i\dbar_i G_k)(\lambda\1_k)\qquad \text{and}\qquad
Z_k(\lambda)=\sum_{i<j}(\d_i\dbar_j G_k)(\lambda\1_k),\qquad
\lambda\in\C.\end{equation*}

\begin{defn} Let us put
\begin{equation*}\begin{split}A_{m,n}(k)&= \int_{\C^3}\re\lpar \sum_{i\ne j} (\d_i\d_j
G_k)(\lambda\1_k)h_1h_2\rpar
R_{m,n,3}(\lambda,\lambda+h_1,\lambda+h_2)~\dA_3(\lambda,h_1,h_2),\\
B_{m,n}(k)&=\re \int_{\C^2}\lpar \sum_{i=1}^k (\d_i^2
G_k)(\lambda\1_k)\rpar h_1^2
R_{m,n,2}(\lambda,\lambda+h_1)~\dA_2(\lambda,h_1),\\
C_{m,n}(k)&=2\int_{\C^3}\re\lpar Z_k(\lambda) h_1\bar{h}_2\rpar
R_{m,n,3}(\lambda,\lambda+h_1,\lambda+h_2)~\dA_3(\lambda,h_1,h_2),\\
D_{m,n}(k)&=\int_{\C^2}(\Delta_k G_k)(\lambda\1_k)\babs{h_1}^2
R_{m,n,2}(\lambda,\lambda+h_1)~\dA_2(\lambda,h_1),\quad \text{and},\\
E_{m,n}(k)&=\int_{\C^{k+1}}r(\lambda,h)R_{m,n,k+1}(\lambda,\lambda\1_k+h)~\dA_{k+1}(\lambda,h).\\
\end{split}
\end{equation*}
\end{defn}

Our preceding efforts in this section are then summed up by the
following formula.

\begin{lem} \label{churm}For all $m$, $n$, $k$ and all $g\in \coity(\C)$ we have
\begin{equation}\label{terms}\calC_{m,n,k}(g)=A_{m,n}(k)+B_{m,n}(k)+C_{m,n}(k)+D_{m,n}(k)+E_{m,n}(k).
\end{equation}
\end{lem}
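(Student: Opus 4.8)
The plan is to read the identity off from the reductions already carried out in this section; the only genuine point requiring attention is that, for fixed $m$ and $n$, every integral below converges absolutely, so that Fubini's theorem and all the termwise reductions are legitimate. I would start from the representation \eqref{rw2},
\begin{equation*}\calC_{m,n,k}(g)=\int_{\C^{k+1}}G_k(\lambda\1_k+h)\,R_{m,n,k+1}(\lambda,\lambda\1_k+h)\,\dA_{k+1}(\lambda,h),\end{equation*}
and record that absolute convergence holds because $K_{m,n}(z,w)$ is a polynomial in $z$ and $\bar w$, the function $G_k$ is bounded (being a fixed polynomial in the values of the bounded function $g$), and $\e^{-mQ}$ decays faster than any polynomial by \eqref{gro}; moreover the Taylor pieces below are supported in $\lambda\in\supp g$, a compact set. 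Hence all manipulations that follow are justified.

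Next I would substitute the second-order Taylor expansion \eqref{t2}, $G_k(\lambda\1_k+h)=T_1(\lambda,h)+T_2(\lambda,h)+r(\lambda,h)$, which is available since $G_k(\lambda\1_k)=0$ by Lemma \ref{zerodiag}; the remainder satisfies $r(\lambda,h)=\calO(\babs{h}_\infty^3)$ uniformly for $\lambda\in\supp g$, because $G_k$ is a fixed polynomial in the values $g(\lambda_l)$ and the derivatives of $g$ up to order three are bounded near $\supp g$. This splits $\calC_{m,n,k}(g)$ into three pieces: the $r$-piece is $E_{m,n}(k)$ by definition; the $T_1$-piece vanishes, since by \eqref{int0} it equals $2\re\sum_i\int_{\C^{k+1}}(\d_iG_k)(\lambda\1_k)\,R_{m,n,k+1}(\lambda,\lambda\1_k+h)\,h_i\,\dA_{k+1}(\lambda,h)$, and each summand depends on only two variables, so integrating out the remaining $h_l$ via the reproducing property of $K_{m,n}$ collapses it to $\int_{\C^2}\bigl(\sum_i(\d_iG_k)(\lambda\1_k)\bigr)R_{m,n,2}(\lambda,\lambda+h_1)\,h_1\,\dA_2(\lambda,h_1)$, which is zero by Lemma \ref{zerodiagp}. (This is precisely the lemma proved just above.)

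It remains to treat the $T_2$-piece, and for this I would expand $T_2(\lambda,h)$ into its four natural groups: the holomorphic diagonal part $\re\sum_i(\d_i^2G_k)(\lambda\1_k)h_i^2$, the holomorphic off-diagonal part $\re\sum_{i\ne j}(\d_i\d_jG_k)(\lambda\1_k)h_ih_j$, the mixed diagonal part $\sum_i(\d_i\dbar_iG_k)(\lambda\1_k)\babs{h_i}^2$, and the mixed off-diagonal part $2\re\sum_{i<j}(\d_i\dbar_jG_k)(\lambda\1_k)h_i\bar h_j$. Each monomial involves at most two of the indices $1,\dots,k$, so inside the integral over $\C^{k+1}$ I can integrate out the remaining $h_l$'s one at a time, each such integration merging the two factors of the cyclic product $R_{m,n,k+1}$ adjacent to the vertex $\lambda+h_l$ by means of $\int_\C K_{m,n}(z_1,w)K_{m,n}(w,z_2)\e^{-mQ(w)}\,\dA(w)=K_{m,n}(z_1,z_2)$. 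A monomial in a single $h$-variable thereby collapses the $(k+1)$-cycle to $R_{m,n,2}(\lambda,\lambda+h_1)$, and one in two $h$-variables to $R_{m,n,3}(\lambda,\lambda+h_1,\lambda+h_2)$, after relabeling. Summing the four groups produces exactly $B_{m,n}(k)$, $A_{m,n}(k)$, $D_{m,n}(k)$ and $C_{m,n}(k)$ respectively — using $(\Delta_kG_k)(\lambda\1_k)=\sum_i(\d_i\dbar_iG_k)(\lambda\1_k)$ and $Z_k(\lambda)=\sum_{i<j}(\d_i\dbar_jG_k)(\lambda\1_k)$ — and adding $E_{m,n}(k)$ yields \eqref{terms}. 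The main thing to watch is the bookkeeping of which variables survive each reduction together with the absolute convergence that licenses it; there is no analytic difficulty here, as the actual asymptotics of the five terms $A_{m,n}(k),\dots,E_{m,n}(k)$ are deferred to the later sections.
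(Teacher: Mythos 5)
Your proposal reconstructs exactly the paper's chain of reasoning: starting from the representation \eqref{rw2}, substituting the second-order Taylor expansion \eqref{t2} (licensed by Lemma \ref{zerodiag}), killing the $T_1$-piece via the reproducing property and Lemma \ref{zerodiagp}, grouping $T_2$ into its four monomial types and collapsing each to an integral over $\C^2$ or $\C^3$, and identifying the remainder with $E_{m,n}(k)$. The extra care you give to absolute convergence and Fubini is a reasonable supplement the paper leaves implicit; otherwise the argument is the same.
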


The rest of this paper will be devoted to a proof the following
theorem.

\begin{thm}\label{mth} Suppose that $g\in \coity(\setS_\tau^\circ\cap X)$. Then for all
$k\ge 2$ the numbers $A_{m,n}(k)$, $B_{m,n}(k)$, and $E_{m,n}(k)$
converge to $0$ as $m\to \infty$ and $n-m\tau\to 0$. Moreover we
have that
\begin{equation*}\lim_{m\to\infty,n-m\tau\to
0}D_{m,n}(k)=\begin{cases}\frac 1 2 \int_\C\babs{\nabla
g(\lambda)}^2\dA(\lambda)& \text{if}\quad k=2,\cr 0 & \text{if}\quad
k\ge 3,\end{cases}
\end{equation*}
and
\begin{equation*}\lim_{m\to\infty,n-m\tau\to
0}C_{m,n}(k)=\begin{cases}-\frac 1 4 \int_\C\babs{\nabla
g(\lambda)}^2\dA(\lambda)& \text{if}\quad k=2,\cr 0 & \text{if}\quad
k\ge 3.\end{cases}
\end{equation*}
\end{thm}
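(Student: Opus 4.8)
The plan is to prove Theorem~\ref{mth} by evaluating each of the five terms $A_{m,n}(k),B_{m,n}(k),C_{m,n}(k),D_{m,n}(k),E_{m,n}(k)$ in the change of variables $(\lambda,h)$, using the near-diagonal asymptotics for $R_{m,n,2}$ and $R_{m,n,3}$ from \eqref{apu}--\eqref{apuu}, together with off-diagonal damping estimates for $K_{m,n}$ (to be established in Sect.~\ref{poh}) to control the contributions from $|h|_\infty$ large. The key scaling is $h=\zeta/\sqrt{m\Delta Q(\lambda)}$: on the region $|h|_\infty\le M\delta_m$ the kernel $R_{m,n,2}(\lambda,\lambda+h)$ behaves like $m^2\Delta Q(\lambda)^2 e^{-m\Delta Q(\lambda)|h|^2}$, so $\int m^2\Delta Q(\lambda)^2 e^{-m\Delta Q(\lambda)|h_1|^2}|h_1|^2\,\dA(h_1)\to 1$ after rescaling (a Gaussian moment), while the prefactor $m^2$ is exactly absorbed by the two factors of $\dA(h)=\diff^2 h/\pi$ scaling like $1/m$ each. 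Thus $D_{m,n}(k)\to\int_\C (\Delta_k G_k)(\lambda\1_k)\,\dA(\lambda)$, which by Lemma~\ref{zerolap} equals $\frac12\int|\nabla g|^2\dA$ when $k=2$ and $0$ when $k\ge 3$. Similarly $B_{m,n}(k)$ involves the Gaussian second moment $\int \zeta^2 e^{-|\zeta|^2}\,\dA(\zeta)=0$ (the angular integral of $\zeta^2$ vanishes), so $B_{m,n}(k)\to 0$ for every $k$.

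Next I would handle $C_{m,n}(k)$ and $A_{m,n}(k)$, which are $\C^3$-integrals governed by \eqref{apuu}. After the substitution $h_i=\zeta_i/\sqrt{m\Delta Q(\lambda)}$ the leading part of $R_{m,n,3}$ becomes $m^3\Delta Q(\lambda)^3 e^{\zeta_1\bar\zeta_2-|\zeta_1|^2-|\zeta_2|^2}$; the three factors $m^3$ in the prefactor are cancelled by the three $1/m$'s from $\dA_3(\lambda,h_1,h_2)$, leaving $\int_{\C^2} \zeta_1\bar\zeta_2\, e^{\zeta_1\bar\zeta_2-|\zeta_1|^2-|\zeta_2|^2}\,\dA(\zeta_1)\,\dA(\zeta_2)$. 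Expanding $e^{\zeta_1\bar\zeta_2}=\sum \zeta_1^l\bar\zeta_2^l/l!$ and doing the angular integrations, only the $l=1$ term survives, giving value $1$; hence $C_{m,n}(k)\to 2\int_\C \re Z_k(\lambda)\,\dA(\lambda)$. By Lemma~\ref{zgr}, $\re Z_k=0$ for $k\ge 3$, so $C_{m,n}(k)\to 0$ there, and for $k=2$ we have $Z_2(\lambda)=-|\dbar g(\lambda)|^2=-\frac14|\nabla g(\lambda)|^2$ (it is real), so $C_{m,n}(2)\to -\frac14\int|\nabla g|^2\,\dA$ after the value-$1$ Gaussian-type integral. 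For $A_{m,n}(k)$, the same rescaling produces the integral $\int \zeta_1\zeta_2\,e^{\zeta_1\bar\zeta_2-|\zeta_1|^2-|\zeta_2|^2}\,\dA(\zeta_1)\,\dA(\zeta_2)$, whose angular part vanishes identically, so $A_{m,n}(k)\to 0$ for all $k$. In both cases the $\calO(1)$ and $\calO(m\,e^{-m\Delta Q|h_i|^2/2})$ error terms from \eqref{apuu} must be shown to contribute $o(1)$: the $\calO(1)$ term integrated against $|h_i|\le M\delta_m$ gives $\calO(\delta_m^{2k})\to 0$, and the exponentially-decaying terms after rescaling in the other variable integrate to $\calO(1/m)$.

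For the remainder term $E_{m,n}(k)=\int_{\C^{k+1}} r(\lambda,h)\,R_{m,n,k+1}(\lambda,\lambda\1_k+h)\,\dA_{k+1}$, where $r(\lambda,h)=\calO(|h|_\infty^3)$, I would bound $|R_{m,n,k+1}|$ by iterating Lemma~\ref{prev}: $|R_{m,n,k+1}(\lambda,\lambda\1_k+h)|\le C m^{k+1}\prod_{\text{cyclic}} e^{-c m|\cdot|^2}$ on the near-diagonal region, so the $\zeta$-rescaling turns $\int|h_i|^3 m^{k+1}e^{-cm\sum|h_i|^2}\dA_{k+1}$ into $\calO(m^{-3/2})\to 0$; the far-diagonal region $|h_i|_\infty>M\delta_m$ (for some $i$) is controlled by the off-diagonal decay of $K_{m,n}$, since $g$ is compactly supported in the bulk, this forces $\lambda$ to lie within $M\delta_m$ of $\supp g$ while at least one $\lambda_i=\lambda+h_i$ is far, giving superpolynomial decay $\e^{-c\log^2 m}$ times polynomial factors. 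The main obstacle, and where the bulk of the work goes, is precisely these off-diagonal estimates: one needs a Gaussian-type bound $|K_{m,n}(z,w)|\e^{-m(Q(z)+Q(w))/2}\le C m\,\e^{-c\sqrt{m}\,|z-w|}$ (or at least good decay when $|z-w|\gtrsim\delta_m$) valid uniformly for $z$ in a neighborhood of $\supp g$ and all $w\in\C$, so that the cyclic products $R_{m,n,k}$ and $R_{m,n,k+1}$ can be truncated to the near-diagonal region at the cost of an $o(1)$ error; establishing this damping (presumably via Lemma~\ref{klam} combined with a global $L^2$-type or maximum-principle argument) is the technical heart of the argument and is deferred to Sect.~\ref{poh}.
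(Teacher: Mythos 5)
Your overall strategy—expand around the diagonal, use the near-diagonal asymptotics from Lemma~\ref{prev}/eqs.~\eqref{apu}--\eqref{apuu}, use off-diagonal damping to truncate to the near-diagonal region, and reduce to Gaussian integrals via the rescaling $\xi=\sqrt{m\Delta Q(\lambda)}\,h$—is exactly the paper's approach, and your treatment of $D_{m,n}(k)$, $B_{m,n}(k)$, $A_{m,n}(k)$ and $E_{m,n}(k)$ matches it in spirit. (A small bookkeeping slip: for the $\C^3$-integrals, $\dA_3(\lambda,h_1,h_2)$ supplies only two factors of $1/(m\Delta Q)$ after the substitution, since $\dA(\lambda)$ does not rescale; the third power of $m$ is cancelled by the quadratic monomials $h_1h_2$, $h_1\bar h_2$, etc.\ in the integrand. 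This doesn't affect the conclusion.)

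There is, however, a genuine error in the treatment of $C_{m,n}(k)$. You claim that the rescaled leading term involves
\[
\int_{\C^2}\zeta_1\bar\zeta_2\,\e^{\zeta_1\bar\zeta_2-|\zeta_1|^2-|\zeta_2|^2}\,\dA(\zeta_1)\,\dA(\zeta_2)=1,
\]
and from this deduce $C_{m,n}(k)\to 2\int\re Z_k\,\dA$. But this Gaussian integral is zero: expanding $\e^{\zeta_1\bar\zeta_2}=\sum_l \zeta_1^l\bar\zeta_2^l/l!$, every term is $\zeta_1^{l+1}\bar\zeta_2^{l+1}/l!$, whose angular integral in $\zeta_1$ vanishes for all $l\ge 0$ (there is no compensating power of $\bar\zeta_1$). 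The exponent $\zeta_1\bar\zeta_2$ is \emph{not} symmetric in $\zeta_1\leftrightarrow\zeta_2$, and this asymmetry is the whole point: the two pieces of $C_{m,n}(k)$ behave differently. One piece, with prefactor $Z_k(\lambda)h_1\bar h_2$, produces $\int\zeta_1\bar\zeta_2\,\e^{\zeta_1\bar\zeta_2-\cdots}\dA_2=0$ and vanishes in the limit; the other, with prefactor $\overline{Z_k(\lambda)}\,\bar h_1 h_2$, produces $\int\bar\zeta_1\zeta_2\,\e^{\zeta_1\bar\zeta_2-\cdots}\dA_2$, and \emph{here} the $l=1$ term survives, giving $1$. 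Consequently the correct limit is $\int_\C\overline{Z_k(\lambda)}\,\dA(\lambda)$, not $2\int\re Z_k\,\dA$. Your stated intermediate claim would yield $-\frac12\int|\nabla g|^2\dA$ for $k=2$, off by a factor of two from the answer you write down, so the exposition is internally inconsistent.

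This error also hides a subtlety you miss for $k\ge 3$: Lemma~\ref{zgr} says $Z_k$ is pure imaginary, so $\int\overline{Z_k}\,\dA$ is \emph{pure imaginary} but not \emph{obviously zero}. Your shortcut "$\re Z_k=0$, hence $C_{m,n}(k)\to 0$" works only for the (incorrect) formula $2\int\re Z_k\,\dA$. The paper closes the gap with a separate argument: the cumulant $\calC_{m,n,k}(g)$ is real for real $g$, and the terms $A_{m,n},B_{m,n},D_{m,n},E_{m,n}$ have all been shown to converge to real (indeed zero) limits, so the limit of $C_{m,n}(k)$ must be real as well; being both real and pure imaginary it must vanish. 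You should add this reality argument — without it, the vanishing of $C_{m,n}(k)$ for $k\ge 3$ is unproved.
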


\medskip

It should be noted that Th.~\ref{mth} implies Th.~\ref{mthm}.
(Convergence of the cumulants of $\fluct_n g$ to the cumulants of
$N\lpar e_g,v_g^2\rpar$ is equivalent to convergence of the moments
which implies convergence in distribution.)

\medskip

In order to verify Th.~\ref{mth}, we will first need to look more
closely at the behaviour of the function $(\lambda,h)\mapsto
G_k(\lambda\1_k+h)R_{m,n,k+1}(\lambda,\lambda\1_k+h)$ in the next
section. We shall see that this function becomes negligible when $h$
is "large" in the sense that $\babs{h_i}\ge M_k\log m/\sqrt{m}$ for
some $i$, where $M_k$ is a sufficiently large number independent of
$m$ and $n$ as long as $\supp g\subset \setS_\tau^\circ\cap X$ and
$\babs{n-m\tau}\le 1$. This will imply that we can approximate the
integrals defining the numbers $A_{m,n}(k)$,\ldots, $E_{m,n}(k)$ by
integrals over a small neighbourhood of the diagonal in $\C^{k+1}$.

\section{Off-diagonal damping} \label{poh}

Fix a number $k\ge 2$. Throughout this section, it will be
convenient to denote
\begin{equation*}\lambda_0=\lambda_{k+1}=\lambda,\end{equation*}
so that we can write
\begin{equation*}R_{m,n,k+1}(\lambda,\ldots,\lambda_k)=\prod_{i=0}^k
K_{m,n}(\lambda_i,\lambda_{i+1})~\e^{-m(Q(\lambda_i)+Q(\lambda_{i+1}))/2}.\end{equation*}
We will frequently without further mention apply this convention in
the sequel. We will need two lemmas.

\begin{lem}\label{sgu} (\cite{B}) There is a number $C$ such that for all $z,w\in\C$ and
all $m,n$ with $n\le m\tau+1$ holds:
\begin{equation*}\babs{K_{m,n}(z,w)}^2~\e^{-m(Q(z)+Q(w))}\le
Cm^2~\e^{-m(Q(z)-\widehat{Q}_\tau(z))}~\e^{-m(Q(w)-\widehat{Q}_\tau(w))}.\end{equation*}
\end{lem}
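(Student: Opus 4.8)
The plan is to reduce the estimate to a pointwise bound on the diagonal of the reproducing kernel, and from there to a Bernstein--Markov type inequality controlling polynomials by the equilibrium potential. First, by the Cauchy--Schwarz inequality on the sum defining $K_{m,n}$ one has $\babs{K_{m,n}(z,w)}^2\le K_{m,n}(z,z)\,K_{m,n}(w,w)$, so it suffices to prove the diagonal estimate
$$K_{m,n}(z,z)\,\e^{-mQ(z)}\le Cm\,\e^{-m(Q(z)-\widehat{Q}_\tau(z))},\qquad z\in\C,$$
uniformly in $z,m,n$ subject to $n\le m\tau+1$; the lemma follows with the constant replaced by $C^2$. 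Recalling the extremal characterization $K_{m,n}(z,z)=\sup\{\babs{p(z)}^2:\ p\in H_{m,n},\ \int_\C\babs{p}^2\e^{-mQ}\,\dA\le 1\}$, and that elements of $H_{m,n}$ have degree $\le n-1\le m\tau$, it is enough to show: there is a constant $C$ (depending only on $Q$ and $\tau$) such that for every admissible $p$ and every $w\in\C$,
$$\babs{p(w)}^2\le Cm\,\e^{m\widehat{Q}_\tau(w)}.$$

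I would prove this in two steps. \emph{Local step.} Since $Q$ is $\calC^\infty$, on a fixed bounded neighbourhood $\Omega$ of the droplet $\setS_\tau$ the Laplacian satisfies $\Delta Q\le A$ for some constant $A$. Fix $z_0\in\setS_\tau$ and put $r=1/\sqrt m$, so that $D(z_0,r)\subset\Omega$ for $m$ large. The distributional Laplacian of $\log\babs{p(w)}^2-mQ(w)+mA\babs{w-z_0}^2$ is $\ge -m\Delta Q+mA\ge 0$ on $D(z_0,r)$, so this function, and hence its exponential $\babs{p(w)}^2\e^{-mQ(w)+mA\babs{w-z_0}^2}$, is subharmonic there. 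The sub-mean value inequality over $D(z_0,r)$, together with $\babs{w-z_0}^2\le r^2=1/m$ and $\int\babs{p}^2\e^{-mQ}\,\dA\le 1$, gives $\babs{p(z_0)}^2\e^{-mQ(z_0)}\le m\,\e^{A}$. Since $\widehat{Q}_\tau=Q$ on $\setS_\tau$, this says exactly that the subharmonic function $v:=\tfrac1m\log\babs{p}^2-\tfrac1m\log(m\e^{A})$ satisfies $v\le\widehat{Q}_\tau$ on $\setS_\tau$. (The bounded range of $m$ for which $1/\sqrt m\ge\dist(\setS_\tau,\partial\Omega)$ is dealt with by using a fixed radius and enlarging $C$, since $\e^{mAr^2}$ stays bounded and $m\ge 1$.)

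\emph{Global step.} From $\deg p\le n-1\le m\tau$ one gets $v\in\SH_\tau$, i.e. $v(w)\le\tau\log_+\babs{w}^2+\calO(1)$; combined with \eqref{beq} this shows $v-\widehat{Q}_\tau$ is bounded above on $\C$. On $\C\setminus\setS_\tau$ the potential $\widehat{Q}_\tau$ is harmonic, so $v-\widehat{Q}_\tau$ is subharmonic there; it is $\le 0$ on $\partial\setS_\tau$ by the local step, and bounded above near $\infty$, hence extends subharmonically across $\infty$. The maximum principle on $\widehat{\C}\setminus\setS_\tau$ — whose complementary compact set $\setS_\tau$ is non-polar, since $\diff\sigma_\tau=\tau^{-1}\Delta Q\,\1_{\setS_\tau\cap X}\,\dA$ is a probability measure and therefore $\setS_\tau$ has positive area — then yields $v\le\widehat{Q}_\tau$ on all of $\C$. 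Unwinding, $\babs{p(w)}^2\le m\,\e^{A}\,\e^{m\widehat{Q}_\tau(w)}$; taking the supremum over admissible $p$ gives the diagonal estimate, and the Cauchy--Schwarz reduction finishes the proof.

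The main obstacle is the global step: one must check carefully that the Phragm\'en--Lindel\"of/maximum-principle argument really applies at $\infty$ (this is where non-polarity of the droplet is used) and, throughout both steps, that the constant $A$ — hence $C$ — depends only on $Q$ and $\tau$, not on $z,w,m,n$. The latter is precisely why the sub-mean value radius must be taken of order $1/\sqrt m$, so that the Gaussian-type correction $\e^{mAr^2}$ remains $\calO(1)$.
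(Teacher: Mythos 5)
Your argument is correct and it is in essence the standard proof of this Bernstein--Walsh type bound, which is what the cited sources (\cite{B}; \cite{AH}, Prop.\ 3.6) carry out: reduce by Cauchy--Schwarz to the diagonal, use the extremal characterization of $K_{m,n}(z,z)$, obtain the on-droplet bound from the sub-mean-value inequality at scale $1/\sqrt m$ (so that $\e^{mA r^2}$ stays bounded), and propagate it to all of $\C$ by comparing the subharmonic function $\tfrac1m\log\babs{p}^2 - \tfrac1m\log(m\e^{A})\in\SH_\tau$ with $\widehat Q_\tau$, using $\deg p\le n-1\le m\tau$ for the growth rate and the maximum principle on $\widehat\C\setminus\setS_\tau$ (with $\setS_\tau$ of positive area, hence non-polar). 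The only cosmetic points worth tidying are that the maximum principle should be applied separately on each component of $\C\setminus\setS_\tau$, with the non-polarity/removable-singularity-at-$\infty$ discussion needed only on the unbounded one, and that the boundary inequality is really the upper-semicontinuity statement $\limsup_{w\to\zeta}\bigl(v(w)-\widehat Q_\tau(w)\bigr)\le 0$ for $\zeta\in\partial\setS_\tau$.
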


\begin{proof} See \cite{B} or \cite{AH}, Prop. 3.6.
\end{proof}

\begin{lem}\label{nex} (\cite{AH}) Let $K$ be a compact subset of $\setS_\tau^\circ\cap
X$ and $d=\dist\,(K;\C\setminus(\setS_\tau\cap X))$. There then
exist positive numbers $C$ and $\epsilon$ depending only on $d$ such
that for all $z\in K$, $h\in\C$ and all $m,n\ge 1$ such that
$\babs{n-m\tau}\le 1$ holds:
\begin{equation*}\babs{K_{m,n}(z,z+h)}\e^{-m(Q(z)+Q(z+h))/2}\le
Cm\e^{-\epsilon\sqrt{m}\min\{d,\babs{h}\}}.\end{equation*}
\end{lem}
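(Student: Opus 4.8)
The plan is to combine the crude global bound of Lemma \ref{sgu} with the two basic features of the bulk: that $Q-\widehat Q_\tau$ vanishes on $\setS_\tau\cap X$ and grows quadratically as one leaves it, and that $\widehat Q_\tau$ is $\calC^{1,1}$ so its gradient is controlled. Fix $z\in K$ and write $d=\dist(K;\C\setminus(\setS_\tau\cap X))$. The dichotomy is according to whether $z+h$ stays well inside the droplet or not. First, if $\babs{h}\le d/2$, then $z+h$ lies in a fixed compact subset $K'$ of $\setS_\tau^\circ\cap X$ (the $d/2$-neighbourhood of $K$), and on such a set Lemma \ref{klam}, or rather the near-diagonal estimate of Lemma \ref{prev}, gives
\begin{equation*}\babs{K_{m,n}(z,z+h)}\e^{-m(Q(z)+Q(z+h))/2}\le Cm\,\e^{-cm\babs{h}^2}\le Cm\,\e^{-c\sqrt m\,\babs{h}}\end{equation*}
on the smaller range $\babs{h}\le M\delta_m$, and for $M\delta_m\le \babs{h}\le d/2$ one still gets exponential decay in $\sqrt m\babs{h}$ from the same Gaussian factor since there $\babs{h}\ge M\log m/\sqrt m$ and $m\babs{h}^2\ge \sqrt m\,\babs{h}\cdot M\log m$. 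This already yields the claim with $\min\{d,\babs{h}\}=\babs{h}$.

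The remaining case is $\babs{h}\ge d/2$, where we must produce the floor $\e^{-\epsilon\sqrt m\,d}$. Here I would invoke Lemma \ref{sgu}: the left side is at most $Cm\,\e^{-m(Q(z)-\widehat Q_\tau(z))/2}\e^{-m(Q(z+h)-\widehat Q_\tau(z+h))/2}$. Since $z\in\setS_\tau\cap X$ the first exponent vanishes. For the second, the point $z+h$ is at distance at least $d/2$ from the complement of $\setS_\tau\cap X$ only when $\babs{h}$ is not too large; the key elementary fact is that $Q-\widehat Q_\tau\ge 0$ everywhere and, by the $\calC^{1,1}$-regularity of $\widehat Q_\tau$ together with $\Delta(Q-\widehat Q_\tau)>0$ on $X$, one has a quadratic lower bound $Q(\zeta)-\widehat Q_\tau(\zeta)\ge c_0\,\dist(\zeta,\partial(\setS_\tau\cap X))^2$ for $\zeta$ in a fixed neighbourhood, uniformly. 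Combining the two cases, whenever $z+h$ has travelled a distance comparable to $d$ out of the droplet the exponent $m(Q(z+h)-\widehat Q_\tau(z+h))$ is $\gtrsim m d^2\gtrsim \sqrt m\, d$ for $m$ large, giving the bound with $\min\{d,\babs{h}\}=d$; and if $z+h$ is still inside $\setS_\tau\cap X$ but $\babs{h}\ge d/2$, we fall back on the first case applied along the segment, or simply note that there the estimate with $d$ on the right is weaker than what the first paragraph already gave. A short interpolation across the transitional annulus $d/2\le\babs{h}\le Cd$ ties the two regimes together.

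The main obstacle is making the quadratic growth estimate for $Q-\widehat Q_\tau$ near $\partial(\setS_\tau\cap X)$ genuinely \emph{uniform} over the relevant compact sets and over the (finitely many, possibly singular) boundary arcs, and then patching the two exponential decay mechanisms — the Gaussian near-diagonal decay of Lemma \ref{prev} inside the droplet, and the obstacle-function decay of Lemma \ref{sgu} outside — so that a single constant $\epsilon$ depending only on $d$ works in all of $\C$. Everything else is bookkeeping: choosing $M$ in $\delta_m=\log m/\sqrt m$ large enough that $m\delta_m^2\ge \sqrt m\,\delta_m\cdot(\text{large})$ absorbs the polynomial prefactor $m$, and checking that the constants extracted from Lemmas \ref{sgu}, \ref{klam}, \ref{prev} do not depend on $z_0\in K$.
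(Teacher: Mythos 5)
The paper does not prove this lemma; it is quoted from \cite{AH}, Theorem~8.3, so there is no internal argument to compare against. Nonetheless, your plan has a genuine gap, and it is worth spelling out exactly where it fails, because the gap is precisely the reason this off-diagonal damping estimate is nontrivial.

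The two mechanisms you propose cover two regimes. Regime (1): $\babs{h}$ small enough that the near-diagonal asymptotics of Lemma \ref{klam} / Lemma \ref{prev} apply, where you extract the Gaussian factor $\e^{-cm\babs{h}^2}$. Regime (2): $z+h$ lies \emph{outside} $\setS_\tau\cap X$, where Lemma \ref{sgu} together with the growth of the obstacle function $Q-\widehat Q_\tau$ off the droplet gives decay. But there is a third regime that neither mechanism touches: $z+h$ still inside $\setS_\tau\cap X$ but $\babs{h}$ of order one. In that range Lemma \ref{sgu} is useless because $Q(z)=\widehat Q_\tau(z)$ and $Q(z+h)=\widehat Q_\tau(z+h)$, so its right-hand side is just $Cm^2$ with no decay; and Lemma \ref{klam} only gives the approximation $K_{m,n}\approx K_m^1$ for $z,w$ in a disc $D(z_0;\eps)$ whose radius $\eps$ is governed by the analyticity radius of $Q$ and the lower bound on $\Delta Q$ — it is in no way tied to $d=\dist(K;\C\setminus(\setS_\tau\cap X))$, and can perfectly well be much smaller than $d$. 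So for $\eps\lesssim \babs{h}\lesssim d$ your proposal yields no decay at all, whereas the lemma demands $\e^{-\epsilon\sqrt m\babs{h}}$ there. The phrases "applied along the segment'' and "a short interpolation across the transitional annulus'' do not describe an actual argument: the near-diagonal expansion cannot be chained, since it controls only pairs of points in a \emph{single} small disc, not a sequence of overlapping discs, and there is nothing to interpolate between when both endpoints of the range give no information.

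Two smaller issues. First, even inside Regime (1) you assert the Gaussian factor persists for $M\delta_m\le\babs{h}\le d/2$ "from the same Gaussian factor''; but Lemma \ref{prev} is stated only for $\babs{h}\le M\delta_m$, and extending the estimate up to $\babs{h}\lesssim\eps$ requires replacing the Taylor remainder bound $\calO(\babs{h}^3)$ in \eqref{vis2} by a genuine coercivity estimate for $Q(z)+Q(w)-2\re\psi(z,\bar w)$ on a fixed neighbourhood of the anti-diagonal — a true but separate fact you would need to state and prove. Second, in Regime (2) you invoke quadratic growth of $Q-\widehat Q_\tau$ in the distance to $\partial(\setS_\tau\cap X)$; this is backwards, since $Q-\widehat Q_\tau\equiv 0$ on $\setS_\tau$, and the correct statement is quadratic growth in $\dist(\cdot,\setS_\tau\cap X)$, which only gives decay once $z+h$ has actually left the droplet.

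The substantive content of Lemma \ref{nex} is precisely the decay in the third regime, between two bulk points that are far apart on the scale $\eps\lesssim\babs{h}\lesssim d$. That cannot be obtained by patching the near-diagonal expansion with the Bernstein--Walsh-type bound; it requires a different input, typically a weighted $L^2$ (H\"ormander $\bar\partial$-type, or localized-weight) argument that forces the reproducing kernel $K_{m,n}(z,\cdot)$ to concentrate near $z$ at the $1/\sqrt m$ scale and to decay exponentially in $\sqrt m\,\dist(\cdot,z)$ everywhere in a fixed neighbourhood of $K$, including deep in the bulk. That is the machinery behind \cite{AH}, Theorem~8.3, and it is missing from your sketch.
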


\begin{proof} See \cite{AH}, Th.~8.3, cf. also \cite{B2}.\end{proof}

It follows from Lemma \ref{sgu} that
\begin{equation}\label{bug}
\babs{R_{m,n,k+1}(\lambda,\lambda_1,\ldots,\lambda_k)}\le
Cm^{k+1}~\e^{-m(Q(\lambda)-\widehat{Q}_\tau(\lambda))}
~\e^{-m(Q(\lambda_1)-\widehat{Q}_\tau(\lambda_1))}
~\cdots~
\e^{-m(Q(\lambda_k)-\widehat{Q}_\tau(\lambda_k))},\end{equation}
when $n\le m\tau+1$. By the growth assumption \eqref{gro}, using
that $\tau<\rho$ and eq. \eqref{beq}, we conclude that there exists
positive numbers $C$, $C^\prime$  and $\delta$ such that
\begin{equation}\label{bab}\babs{R_{m,n,k+1}}\le C^\prime m^{k+1}\lpar
\max\{\babs{\lambda}^2,\ldots,\babs{\lambda_k}^2\}\rpar^{-m\delta}\quad
\text{when}\quad n\le m\tau+1\quad\text{and}\quad
\max\left\{\babs{\lambda}^2,\ldots,\babs{\lambda_k}^2\right\}\ge
C.\end{equation} Thus if $D_C(0)$ denotes the polydisc
$\left\{(\lambda,\ldots,\lambda_k);~\max\{\babs{\lambda}^2,
\ldots,\babs{\lambda_k}^2\}\le C\right\}$, we have for any $N\in\R$
\begin{equation*}\int_{\C^{k+1}\setminus D_C(0)} \lpar \babs{\lambda}^2+\ldots+\babs{\lambda_k}^2\rpar^N~
\babs{~R_{m,n,k+1}(\lambda,\ldots,\lambda_k)~}~
\dA_{k+1}(\lambda,\ldots,\lambda_k)\to 0,\quad \text{as}\quad
m\to\infty,\, n\le m\tau+1,\end{equation*} when $C$ is large enough.
We shall now show that much more is true. We first have the
following lemma. In the proofs we conform to previous notation and
write
\begin{equation*}\delta_m=\log m/\sqrt{m}.\end{equation*}
We also put
\begin{equation*}d=\dist\,\lpar\supp g;\C\setminus(\setS_\tau\cap
X)\rpar,\end{equation*} and
\begin{equation}\label{ck}K=\left\{z\in \C;\dist\,(z;\C\setminus(\setS_\tau\cap X))\ge
d/2\right\}.\end{equation}
 We also remind the reader of the convention
that $\lambda_{k+1}=\lambda_0=\lambda$.

\begin{lem} \label{grund} There exists positive numbers $M$, $\alpha$ and $m_0$ depending only on $k$ and $d$
such that if $\lambda_j\in K$ and $\babs{\lambda_j-\lambda_{j+1}}\ge
M\delta_m$ for some index $j\in\{0,\ldots,k\}$, then for all $m\ge
m_0$
\begin{equation*}\babs{~R_{m,n,k+1}
(\lambda_0,\lambda_1,\ldots,\lambda_k)~}\le
Cm^{-\alpha},\quad \babs{n-m\tau}\le 1,\end{equation*} where $C$
depends only on $d$.
\end{lem}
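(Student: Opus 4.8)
The plan is to exploit the fact that the cyclic product $R_{m,n,k+1}$ is a product of $k+1$ weighted kernel factors $K_{m,n}(\lambda_i,\lambda_{i+1})\,\e^{-m(Q(\lambda_i)+Q(\lambda_{i+1}))/2}$ around the cycle $\lambda_0,\lambda_1,\dots,\lambda_k,\lambda_0$, and to estimate each factor separately. For the bulk of the factors we use the crude global bound from Lemma \ref{sgu}, namely
\begin{equation*}
\babs{K_{m,n}(\lambda_i,\lambda_{i+1})}\,\e^{-m(Q(\lambda_i)+Q(\lambda_{i+1}))/2}\le Cm\,\e^{-m(Q(\lambda_i)-\widehat Q_\tau(\lambda_i))/2}\,\e^{-m(Q(\lambda_{i+1})-\widehat Q_\tau(\lambda_{i+1}))/2},
\end{equation*}
which is at most $Cm$ everywhere (since $Q\ge\widehat Q_\tau$), and which moreover decays like a negative power of $\max\babs{\lambda_i}^2$ off a fixed compact set by \eqref{bab}. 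For the one factor that straddles the "gap", i.e.\ the factor $K_{m,n}(\lambda_j,\lambda_{j+1})$ where $\babs{\lambda_j-\lambda_{j+1}}\ge M\delta_m$ and $\lambda_j\in K$, we use the off-diagonal damping of Lemma \ref{nex}, which gives decay $\e^{-\epsilon\sqrt m\min\{d,\babs{h}\}}$ with $h=\lambda_{j+1}-\lambda_j$.

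The first step is to reduce to the case where all of $\lambda_0,\dots,\lambda_k$ lie in a fixed compact set, say the closed polydisc $\overline{D_C(0)}$ for $C$ as in \eqref{bab}: if some $\babs{\lambda_i}^2\ge C$, then bounding that one factor and its cyclic neighbour by the power-decay in $\babs{\lambda_i}^2$ coming from \eqref{bab}, and all the remaining $k-1$ factors by $Cm$, gives $\babs{R_{m,n,k+1}}\le C'm^{k+1}(\max\babs{\lambda_i}^2)^{-m\delta}$; for $C$ large and $m$ large this is $\le Cm^{-\alpha}$ outright (indeed superpolynomially small). So from now on assume all $\lambda_i\in\overline{D_C(0)}$. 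The second step: of the $k+1$ factors, apply Lemma \ref{nex} to the single factor $K_{m,n}(\lambda_j,\lambda_{j+1})$ — legitimate because $\lambda_j\in K$ and $\dist(K;\C\setminus(\setS_\tau\cap X))\ge d/2$ — to get that this factor is $\le Cm\,\e^{-\epsilon\sqrt m\min\{d/2,\,M\delta_m\}}$. Since $\delta_m=\log m/\sqrt m\to0$, for $m$ large we have $M\delta_m\le d/2$, so $\min\{d/2,M\delta_m\}=M\delta_m$ and the factor is $\le Cm\,\e^{-\epsilon M\log m}=Cm^{1-\epsilon M}$. The third step: bound each of the remaining $k$ factors by $Cm$ via Lemma \ref{sgu} (or \eqref{bug} with $Q\ge\widehat Q_\tau$), giving
\begin{equation*}
\babs{R_{m,n,k+1}(\lambda_0,\dots,\lambda_k)}\le (Cm)^{k}\cdot Cm^{1-\epsilon M}=C^{k+1}m^{k+1-\epsilon M}.
\end{equation*}
Choosing $M$ so large that $\epsilon M>k+1+\alpha$, say $M=(k+2+\alpha)/\epsilon$, yields the claimed bound $\babs{R_{m,n,k+1}}\le Cm^{-\alpha}$ for all $m\ge m_0$, with $m_0$ chosen large enough that $M\delta_{m_0}\le d/2$ and that \eqref{bab} has kicked in; all constants depend only on $k$ and $d$ as required.

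The only mild subtlety — and the step I would be most careful about — is the interplay in the first reduction between the two regimes ("some $\lambda_i$ large" versus "all $\lambda_i$ in a compact set but some consecutive pair far apart"): one must make sure that when invoking Lemma \ref{nex} the base point of the far-apart pair is genuinely in $K$ and stays at distance $\ge d/2$ from the complement of $\setS_\tau\cap X$, which is exactly why $K$ was defined in \eqref{ck} as a $d/2$-neighbourhood rather than $\supp g$ itself; the hypothesis "$\lambda_j\in K$" in the lemma is precisely tailored to this. Everything else is a routine product estimate. Note that the argument in fact gives a bound of the form $Cm^{-\alpha}$ for \emph{any} prescribed $\alpha$ provided $M$ is taken correspondingly large, which is the form in which it will be used in Section \ref{puuh}.
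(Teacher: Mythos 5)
Your proof is correct and follows essentially the same route as the paper's: apply Lemma \ref{nex} to the single factor $K_{m,n}(\lambda_j,\lambda_{j+1})$ using the hypothesis $\lambda_j\in K$ to get the $m^{1-\epsilon M}$ decay, bound the remaining $k$ factors by $Cm$ via Lemma \ref{sgu}, multiply, and choose $M$ large. The one difference is your preliminary reduction to a compact polydisc via \eqref{bab}: this step is unnecessary, since $Q\ge\widehat Q_\tau$ globally makes the bound from Lemma \ref{sgu} hold as $\babs{K_{m,n}(z,w)}\e^{-m(Q(z)+Q(w))/2}\le Cm$ everywhere with no restriction on $z,w$, so the product estimate already goes through without separating the far-field regime. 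The paper accordingly skips it.
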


\begin{proof} In view of Lemma \ref{nex}, the hypothesis yields that
\begin{equation*}\babs{~K_{m,n}(\lambda_j,\lambda_{j+1})~}~\e^{-m(Q(\lambda_j)+Q(\lambda_{j+1}))/2}
\le
Cm~\e^{-\epsilon\sqrt{m}\min\left\{d/2,
\babs{\lambda_j-\lambda_{j+1}}\right\}},\quad
\babs{n- m\tau}\le 1,\end{equation*} with numbers $C$ and $\epsilon$
depending only on $d$, and $\babs{\lambda_j-\lambda_{j+1}}\ge
M\delta_m$. Choosing $m_0$ large enough that $M\delta_m\le d/2$ for
$m\ge m_0$ it yields that
\begin{equation}\label{dos}
\babs{~K_{m,n}(\lambda_j,\lambda_{j+1})~}~\e^{-m(Q(\lambda_j)+Q(\lambda_{j+1}))/2}
\le Cm~\e^{-\epsilon\sqrt{m}M\delta_m}=Cm^{1-\epsilon M},\quad
\babs{n- m\tau}\le 1,\end{equation} when $m\ge m_0$. On the other
hand, if $n\le m\tau+1$, Lemma \ref{sgu} yields that
\begin{equation}\label{tres}
\babs{~K_{m,n}(\lambda_l,\lambda_{l+1})~}~\e^{-m(Q(\lambda_l)+Q(\lambda_{l+1}))/2}\le
Cm,\quad l=0,\ldots,k.\end{equation} Now \eqref{dos} and
\eqref{tres} implies
\begin{equation}\label{fore}
\babs{~R_{m,n,k+1}(\lambda_0,\ldots,\lambda_k)~}=\prod_{l=0}^k
\babs{~K_{m,n}(\lambda_l,\lambda_{l+1})~}~\e^{-m(Q(\lambda_l)+Q(\lambda_{l+1}))/2}\le
Cm^{k+1-\epsilon M},\end{equation} when $m\ge m_0$ and
$\babs{n-m\tau}\le 1$. It now suffices to choose $M$ large enough
that
\begin{equation*}\epsilon M-k-1>0,\end{equation*}
and then put $\alpha=\epsilon M-k-1$.
\end{proof}

\medskip

We henceforth let $M$ denote a fixed large number with the
properties provided by Lemma \ref{grund}. Let us also put
\begin{equation*}U_g(\lambda)=\dist\,\lpar\lambda;\supp
g\rpar,\quad \lambda\in\C,\end{equation*}
\begin{equation*}U_g^*(\lambda_0,\ldots,\lambda_k)=\max\left\{U_g(\lambda_i);i=0,\ldots,k\right\},\end{equation*}
and
\begin{equation*}V_{m,k}=\biggl\{U_g^*(\lambda_0,\ldots,\lambda_k)\ge Mk\delta_m\biggr\}.\end{equation*}

\begin{lem} \label{vlem} The function
\begin{equation}\label{fun}(\lambda_0,\lambda_1,\ldots,\lambda_k)\mapsto
G_k(\lambda_1,\ldots,\lambda_k)~R_{m,n,k+1}(\lambda_0,\lambda_1,\ldots,\lambda_k),\end{equation}
converges to zero uniformly on the set $V_{m,k}$ as $m\to\infty$ and
$\babs{n-m\tau}\le 1$.
\end{lem}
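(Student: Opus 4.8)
The plan is to exploit the off-diagonal damping bounds (Lemmas \ref{sgu}, \ref{nex}, \ref{grund}) together with the localization properties of $G_k$ to show that on $V_{m,k}$ the product in \eqref{fun} is uniformly small. First I would recall that, as noted after \eqref{fund}, the combinatorial function $G_k$ satisfies $G_k(\lambda_1,\ldots,\lambda_k)=0$ unless $\lambda_i\in\supp g$ for some $i$. Moreover $G_k$ is a polynomial expression in the values $g(\lambda_l)$, and since $g\in\coity(\C)$ is bounded, there is an absolute bound $\babs{G_k(\lambda_1,\ldots,\lambda_k)}\le C_k$ with $C_k$ depending only on $k$ and $\|g\|_\infty$. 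So the only work is to bound $\babs{R_{m,n,k+1}(\lambda_0,\ldots,\lambda_k)}$ uniformly on $V_{m,k}$, under the additional restriction that $G_k$ does not vanish, i.e. that at least one $\lambda_i$, $i\in\{1,\ldots,k\}$, lies in $\supp g$.

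The key dichotomy: fix a point of $V_{m,k}$ where $G_k\ne 0$; then some $\lambda_{i_0}\in\supp g$ (so $U_g(\lambda_{i_0})=0$), while by definition of $V_{m,k}$ some $\lambda_{i_1}$ has $U_g(\lambda_{i_1})\ge Mk\delta_m$. Walking around the cycle $\lambda_0\to\lambda_1\to\cdots\to\lambda_k\to\lambda_0$ from index $i_0$ to index $i_1$, the distance $U_g(\lambda_j)$ changes from $0$ to at least $Mk\delta_m$ in at most $k$ steps, so by the triangle inequality there must be a consecutive pair $(\lambda_j,\lambda_{j+1})$ with $\babs{\lambda_j-\lambda_{j+1}}\ge M\delta_m$; moreover one can arrange that the "inner" endpoint of that edge is within distance $Mk\delta_m$ of $\supp g$, hence lies in the compact set $K$ of \eqref{ck} once $m$ is large enough (because $Mk\delta_m<d/2$ eventually). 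This is exactly the hypothesis of Lemma \ref{grund}. Thus Lemma \ref{grund} gives $\babs{R_{m,n,k+1}(\lambda_0,\ldots,\lambda_k)}\le Cm^{-\alpha}$ for all $m\ge m_0$ and $\babs{n-m\tau}\le 1$, with $C$, $\alpha$, $m_0$ depending only on $k$ and $d$. Multiplying by the bound $C_k$ on $\babs{G_k}$ gives $\babs{G_k\cdot R_{m,n,k+1}}\le CC_km^{-\alpha}\to 0$ uniformly on $V_{m,k}$, which is the assertion.

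The main obstacle I anticipate is the bookkeeping in the dichotomy step — namely verifying that one can always select an edge $(\lambda_j,\lambda_{j+1})$ that is simultaneously "long" ($\babs{\lambda_j-\lambda_{j+1}}\ge M\delta_m$) and has an endpoint in $K$, so that Lemma \ref{grund} genuinely applies rather than only the cruder polynomial-decay-at-infinity estimate \eqref{bab}. The cyclic structure (with the convention $\lambda_0=\lambda_{k+1}=\lambda$) makes this a pigeonhole argument: since $U_g^*\ge Mk\delta_m$ and since $G_k\ne 0$ forces $U_g(\lambda_i)=0$ for some $i\in\{1,\ldots,k\}$, one has a total variation of at least $Mk\delta_m$ of the function $j\mapsto U_g(\lambda_j)$ around a cycle of length $k+1$, so some single step has variation $\ge Mk\delta_m/(k+1)$; rescaling $M$ (absorbing the factor) one gets a long edge, and by choosing among the two endpoints the one with smaller $U_g$-value one stays within $Mk\delta_m$ of $\supp g$, hence in $K$. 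A minor additional care is needed when the long edge touches $\lambda_0=\lambda$, which carries no constraint from $G_k$ directly, but this is handled by the same walk since $\lambda_0$ participates in the cycle on both sides. Once this combinatorial reduction is in place, the uniformity of all constants in $k$ and $d$ (not in $\lambda$) is immediate from the corresponding uniformity already built into Lemmas \ref{nex} and \ref{grund}.
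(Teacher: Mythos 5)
Your overall strategy is the same as the paper's: bound $\babs{G_k}$ by a constant and reduce to showing $R_{m,n,k+1}\to 0$ uniformly on $V_{m,k}\cap\supp G_k$; use cyclic invariance and the pigeonhole structure of the walk around $\lambda_0,\ldots,\lambda_k$ to locate an edge of length $\ge M\delta_m$ with an endpoint in the compact set $K$; then invoke Lemma \ref{grund}. The first (``walking'') paragraph of your dichotomy step is essentially the paper's argument. However, your attempt to resolve the obstacle you flag --- ``by choosing among the two endpoints the one with smaller $U_g$-value one stays within $Mk\delta_m$ of $\supp g$'' --- is wrong. Picking the smaller-$U_g$ endpoint of an \emph{arbitrary} long edge gives no bound at all: one can have, say, $U_g(\lambda_2)=1.5\,Mk\delta_m$ and $U_g(\lambda_3)=3\,Mk\delta_m$ with $\babs{\lambda_2-\lambda_3}\ge M\delta_m$, and the smaller endpoint still lies outside the $Mk\delta_m$-neighbourhood of $\supp g$. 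The correct resolution (which is what the paper does, and what your walking picture implicitly contains) is to use cyclic invariance to relabel so that $\lambda_1\in\supp g$, walk $\lambda_1\to\lambda_2\to\cdots$, and take the \emph{first} index $j$ with $\babs{\lambda_j-\lambda_{j+1}}\ge M\delta_m$; then $\lambda_j$ is reached from $\lambda_1$ by $j-1\le k-1$ short steps, so $U_g(\lambda_j)\le\babs{\lambda_j-\lambda_1}<(k-1)M\delta_m<Mk\delta_m$, hence $\lambda_j\in K$ once $Mk\delta_m\le d/2$. With this choice the ``long'' edge genuinely has length $\ge M\delta_m$ (no $(k+1)$-factor, so no need to rescale $M$), and the endpoint is controlled because it is the \emph{first} long edge, not because of which endpoint has smaller $U_g$. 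You should also note explicitly that the existence of a long edge follows from: if all $k$ steps from $\lambda_1$ to $\lambda_{k+1}=\lambda_0$ were $<M\delta_m$ then every $\lambda_j$ ($j=0,\ldots,k$) would lie within $kM\delta_m$ of $\supp g$, contradicting $U_g^*\ge Mk\delta_m$.
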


\begin{proof} Since $G_k$ is bounded, it suffices to prove that
$R_{m,n,k+1}$ converges to zero uniformly on the set
\begin{equation*}V_{m,k}^\prime=V_{m,k}\cap \supp G_k.\end{equation*}
Here we regard $G_k$ as a function of the variables
$\lambda_0,\ldots,\lambda_k$, which is independent of the parameter
$\lambda_0$. It is then clear that
\begin{equation*}\supp G_k\subset\left\{(\lambda_0,\ldots,\lambda_k);\,
\lambda_0\in\C,\,\text{and}\, \lambda_i\in\supp g\, \text{for some
}i=1,\ldots,k\right\}.\end{equation*} Thus if
$(\lambda_0,\ldots,\lambda_k)\in V_{m,k}^\prime$, then there exists
an index $i\in\{1,\ldots,k\}$ such that $\lambda_i\in \supp g$.
Since the function $R_{m,n,k+1}(\lambda_0,\ldots,\lambda_k)$ is
invariant under the cyclic permutation $0\mapsto
1\mapsto\ldots\mapsto k\mapsto 0$ of the indices, we can
\textit{w.l.o.g.} assume that $i=1$. Then, since $U_g(\lambda_1)=0$
and $U_g^*(\lambda_1,\ldots,\lambda_{k+1})\ge Mk\delta_m$, there
must exist an integer $j\in\{1,\ldots,k\}$ such that
$\babs{\lambda_l-\lambda_{l+1}}< M\delta_m$ for all indices $l$ with
$1\le l<j$ and $\babs{\lambda_j-\lambda_{j+1}}\ge M\delta_m$. It
then follows from the triangle inequality that
\begin{equation}\label{bou}U_g(\lambda_j)\le\babs{\lambda_j-\lambda_1}<Mk\delta_m.\end{equation} If $m$ is
large enough that \begin{equation}\label{lm}Mk\delta_m\le
d/2,\end{equation} then \eqref{bou} implies that $\lambda_j$ belongs
to the compact set $K$ (see \eqref{ck}) and
$\babs{\lambda_j-\lambda_{j+1}}\ge M\delta_m$. Hence Lemma
\ref{grund} yields that
\begin{equation*}\babs{~R_{m,n,k+1}(\lambda_0,\ldots,\lambda_k)~}\le
Cm^{-\alpha}\end{equation*} for large $m$ when $\babs{n-m\tau}\le
1$, where $\alpha>0$. This proves that $R_{m,n,k+1}$ converges
uniformly to $0$ on $V_{m,k}^\prime$.
\end{proof}

\medskip

Let us now put
\begin{equation*}N(\lambda_0,\ldots,\lambda_k)=\max_{0\le i\le
k}\biggl\{\babs{\lambda_i-\lambda_{i+1}}\biggr\}.\end{equation*} We
shall next prove that the function $G_kR_{m,n,k+1}$ is uniformly
small on the set
\begin{equation*}W_{m,k}:=\left\{(\lambda_0,\ldots,\lambda_k);\, U_g^*(\lambda_0,\ldots,\lambda_k)\ge
Mk\delta_m\quad \text{or}\quad N(\lambda_0,\ldots,\lambda_k)\ge
M\delta_m\right\},\end{equation*} where $M=M(k,d)$ is a number provided by
Lemma \ref{vlem}.

\begin{lem} \label{wllem}The function
\begin{equation}\label{funk}(\lambda_0,\lambda_1,\ldots,\lambda_k)\mapsto
G_k(\lambda_1,\ldots,\lambda_k)~R_{m,n,k+1}(\lambda_0,\lambda_1,\ldots,\lambda_k)\end{equation}
converges to zero uniformly on $W_{m,k}$ as $m\to\infty$ and
$\babs{n-m\tau}\le 1$.
\end{lem}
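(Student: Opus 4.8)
The plan is to reduce the statement on $W_{m,k}$ to the already-proved statement on $V_{m,k}$ (Lemma \ref{vlem}). Since $W_{m,k}=V_{m,k}\cup\{N\ge M\delta_m\}$ and we already know $G_k R_{m,n,k+1}\to 0$ uniformly on $V_{m,k}$, it suffices to handle the remaining piece, namely points $(\lambda_0,\ldots,\lambda_k)$ with $N(\lambda_0,\ldots,\lambda_k)\ge M\delta_m$ but $U_g^*(\lambda_0,\ldots,\lambda_k)< Mk\delta_m$ (otherwise we are back in $V_{m,k}$). On this leftover set there is an index $j\in\{0,\ldots,k\}$ with $\babs{\lambda_j-\lambda_{j+1}}\ge M\delta_m$; I want to show that $\lambda_j$ (or $\lambda_{j+1}$) lies in the compact set $K$ of \eqref{ck}, so that Lemma \ref{grund} applies and gives $\babs{R_{m,n,k+1}}\le Cm^{-\alpha}\to 0$.

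First I would note, exactly as in the proof of Lemma \ref{vlem}, that on $\supp G_k$ there is some index $i\in\{1,\ldots,k\}$ with $\lambda_i\in\supp g$, hence $U_g(\lambda_i)=0$; by cyclic invariance of $R_{m,n,k+1}$ we may assume $i=1$. Now the constraint $U_g^*<Mk\delta_m$ means $U_g(\lambda_l)<Mk\delta_m$ for every $l$, so in particular $\dist(\lambda_j;\supp g)<Mk\delta_m$. Provided $m$ is large enough that $Mk\delta_m\le d/2$ (this is \eqref{lm}, and $\delta_m\to 0$ makes it eventually true), this forces $\dist(\lambda_j;\C\setminus(\setS_\tau\cap X))\ge d-Mk\delta_m\ge d/2$, i.e. $\lambda_j\in K$. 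Then with $\babs{\lambda_j-\lambda_{j+1}}\ge M\delta_m$ and $\babs{n-m\tau}\le 1$, Lemma \ref{grund} gives $\babs{R_{m,n,k+1}(\lambda_0,\ldots,\lambda_k)}\le Cm^{-\alpha}$, and since $G_k$ is bounded, $G_k R_{m,n,k+1}\to 0$ uniformly on this part of $W_{m,k}$.

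Combining the two cases: on $V_{m,k}$ use Lemma \ref{vlem}; on the complementary part of $W_{m,k}$ (where $N\ge M\delta_m$ but the point is not already in $V_{m,k}$) use the argument just described. Since both estimates are uniform and both bounds tend to $0$, we conclude that $G_k R_{m,n,k+1}\to 0$ uniformly on $W_{m,k}$ as $m\to\infty$ with $\babs{n-m\tau}\le 1$.

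I expect no serious obstacle here; the one point to be careful about is that in the "$N$ large" case one must genuinely be off $\supp G_k$ or else produce the index $j$ with an endpoint in $K$ — but this is handled precisely by the dichotomy: either $U_g^*\ge Mk\delta_m$ (handled by Lemma \ref{vlem}) or $U_g^*<Mk\delta_m$, and in the latter case \emph{every} $\lambda_l$ is within $Mk\delta_m$ of $\supp g$, so the endpoint $\lambda_j$ of the "long" edge automatically lands in $K$ once $Mk\delta_m\le d/2$. The rest is a direct appeal to Lemma \ref{grund}, with the bound $\babs{G_k}\le C$ (valid since $g$ is bounded and $G_k$ is a fixed polynomial expression in the $g(\lambda_l)$).
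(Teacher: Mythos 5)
Your proposal is correct and follows essentially the same route as the paper: decompose $W_{m,k}$ as $V_{m,k}$ together with the residual set where $U_g^* < Mk\delta_m$ and $N \ge M\delta_m$, apply Lemma \ref{vlem} to the former, and on the latter observe that every coordinate is forced into $K$ once $Mk\delta_m \le d/2$, so the long edge puts you in the hypotheses of Lemma \ref{grund}. The only superfluous step is the initial appeal to $\supp G_k$ and cyclic invariance (borrowed from the Lemma \ref{vlem} proof), which your argument never actually uses — as you yourself note at the end, the dichotomy on $U_g^*$ renders it moot.
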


\begin{proof} By Lemma \ref{vlem} we know that the function
\eqref{funk} converges to zero uniformly on the set $\{U_g^*\ge
Mk\delta_m\}$. It thus suffices to show uniform convergence on the
set
\begin{equation*}W_{m,k}^\prime=\left\{U_g^*(\lambda_0,\ldots,\lambda_k)\le
Mk\delta_m\quad\text{and}\quad N(\lambda_0,\ldots,\lambda_k)\ge
M\delta_m\right\}.\end{equation*} Now note that if $m$ is large enough
that $Mk\delta_m\le d/2$, we will have
\begin{equation*}W_{m,k}^\prime\subset K,\end{equation*}
with $K$ as in \eqref{ck}. Hence if $(\lambda_0,\ldots,\lambda_k)\in
W_{m,k}^\prime$, we will have that $\lambda_i\in K$ and
$\babs{\lambda_i-\lambda_{i+1}}\ge M\delta_m$ for some $i$. It then
follows from Lemma \ref{grund} that
$\babs{~R_{m,n,k+1}(\lambda_0,\ldots,\lambda_k)~}\le Cm^{-\alpha}$
when $\babs{n-m\tau}\le 1$, where $\alpha>0$. It follows that
$R_{m,n,k+1}\to 0$ uniformly on $W_{m,k}^\prime$, and the lemma
follows.\end{proof}

\medskip

It is now advantageous to pass to the coordinate system
$(\lambda,h)$ where $\lambda=\lambda_0$ and $h_i=\lambda_i-\lambda$
for $i=1,\ldots, k$. Let us put
\begin{equation*}\babs{h}_\infty=\max\{\babs{h_i};1\le i\le
k\},\end{equation*} and
\begin{equation}\label{ydef}Y_{m,k}=\left\{(\lambda,h)\in\C^{k+1};\, U_g(\lambda)\le Mk\delta_m,\,
\babs{h}_\infty\le Mk\delta_m\right\}.\end{equation} As we shall see,
everything interesting goes on in the set $Y_{m,k}$ when $m$ is
large and $\babs{n-m\tau}\le 1$.

\begin{lem} \label{fa} The function
\begin{equation*}(\lambda,h)\mapsto
G_k(\lambda\1_k+h)~R_{m,n,k+1}(\lambda,\lambda\1_k+h)\end{equation*}
converges to zero uniformly on the complement of $Y_{m,k}$ as
$m\to\infty$ and $\babs{n-m\tau}\le 1$.
\end{lem}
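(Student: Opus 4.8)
The plan is to deduce Lemma~\ref{fa} from Lemma~\ref{wllem} by carefully comparing the coordinates $(\lambda_0,\ldots,\lambda_k)$ with the coordinates $(\lambda,h)=(\lambda_0,h_1,\ldots,h_k)$, where $h_i=\lambda_i-\lambda$. The key observation is that the complement of $Y_{m,k}$ is (essentially) contained in $W_{m,k}$, so that the uniform convergence already established on $W_{m,k}$ transfers. More precisely, suppose $(\lambda,h)\notin Y_{m,k}$. Then either $U_g(\lambda)>Mk\delta_m$, or $\babs{h}_\infty>Mk\delta_m$; I will treat these two cases separately.

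In the first case, $U_g(\lambda_0)=U_g(\lambda)>Mk\delta_m$, hence $U_g^*(\lambda_0,\ldots,\lambda_k)\ge U_g(\lambda_0)>Mk\delta_m$, so the point lies in $W_{m,k}$ and we conclude by Lemma~\ref{wllem}. In the second case, pick an index $i\in\{1,\ldots,k\}$ with $\babs{h_i}=\babs{h}_\infty>Mk\delta_m$. Then $\babs{\lambda_i-\lambda_0}=\babs{h_i}>Mk\delta_m\ge M\delta_m$ (for $k\ge 2$, and in any case $k\ge 1$). Writing $\lambda_i-\lambda_0$ as the telescoping sum $\sum_{l=0}^{i-1}(\lambda_{l+1}-\lambda_l)$ of $i\le k$ terms, the triangle inequality forces $\babs{\lambda_{l+1}-\lambda_l}\ge \babs{h_i}/k> M\delta_m$ for at least one index $l\in\{0,\ldots,i-1\}$, so $N(\lambda_0,\ldots,\lambda_k)\ge M\delta_m$ and again the point lies in $W_{m,k}$. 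Applying Lemma~\ref{wllem} gives the uniform convergence to zero of the function in \eqref{fun}, which is exactly the function in the present statement expressed in the $(\lambda_0,\ldots,\lambda_k)$-coordinates; since $G_k(\lambda\1_k+h)R_{m,n,k+1}(\lambda,\lambda\1_k+h)$ is literally the same function after the change of variables, this proves the lemma.

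I would also remark that one must be a little careful about whether it is $N\ge M\delta_m$ or $N\ge Mk\delta_m$ that appears: the definition of $W_{m,k}$ uses the threshold $M\delta_m$ for $N$, and the telescoping argument above indeed only yields $\babs{\lambda_{l+1}-\lambda_l}>M\delta_m$, which matches. Thus no adjustment of the constant $M$ is needed beyond the one already fixed in Lemma~\ref{grund} (and propagated through Lemmas~\ref{vlem} and~\ref{wllem}).

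The main (and only) obstacle is purely bookkeeping: making sure that the "large $h$'' condition is correctly propagated through the telescoping sum so that one lands in the set $W_{m,k}$ where Lemma~\ref{wllem} applies, and confirming that the factor of $k$ lost in the triangle inequality is harmless because $\babs{h}_\infty$ exceeds $Mk\delta_m$ (not merely $M\delta_m$) in the region under consideration. There is no analytic difficulty here; all the real work — the off-diagonal damping estimates of Lemmas~\ref{sgu}, \ref{nex}, and \ref{grund} — has already been done.
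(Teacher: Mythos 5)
Your proof is correct and is essentially the paper's own argument: reduce to Lemma~\ref{wllem} by showing that the complement of $Y_{m,k}$ is mapped into $W_{m,k}$ under the change of coordinates, with the telescoping/triangle-inequality step for the case $\babs{h}_\infty>Mk\delta_m$. The paper states this step more tersely ("the latter inequality can only hold if $\babs{\lambda_j-\lambda_{j+1}}>M\delta_m$ for some $j$"), whereas you spell out the pigeonhole bookkeeping; no substantive difference.
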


\begin{proof} In view of Lemma \ref{wllem}, it
suffices to prove that if $(\lambda,h)$ is in the complement of
$Y_{m,k}$, then $(\lambda,\lambda_1,\ldots,\lambda_k)$ belongs to
$W_{m,k}$, where $\lambda_i=\lambda+h_i$. But if $(\lambda,h)\not\in
Y_{m,k}$, then either $U_g(\lambda)> Mk\delta_m$, or
$\babs{\lambda-\lambda_i}> Mk\delta_m$ for some $i=1,\ldots,k$. But
the latter inequality can only hold if
$\babs{\lambda_j-\lambda_{j+1}}>M\delta_m$ for some $j$, whence
$N(\lambda,\lambda_1,\ldots,\lambda_k)\ge M\delta_m$. Thus, in
either case, we have $(\lambda_0,\ldots,\lambda_k)\in W_{m,k}$ and
the lemma follows.
\end{proof}

\medskip

The following result sums up our efforts in this section, and is what
is needed to prove the asymptotic behaviour of the cumulants in the
next section.

\begin{lem}\label{gurka}
We have that
\begin{equation*}\int_{\C^{k+1}\setminus Y_{m,k}}
\babs{~G_k(\lambda\1_k+h)~
R_{m,n,k+1}(\lambda,\lambda\1_k+h))~}~\dA_{k+1}(\lambda,h)\to
0,\end{equation*} as $m\to\infty$ and $\babs{n-m\tau}\le 1$.
\end{lem}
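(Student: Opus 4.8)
I want to bound the integral over $\C^{k+1}\setminus Y_{m,k}$ by a quantity tending to $0$, combining the pointwise uniform smallness already established in Lemma~\ref{fa} with an integrable tail bound coming from the growth estimate \eqref{bab}. The point is that uniform pointwise decay of $G_kR_{m,n,k+1}$ on $\C^{k+1}\setminus Y_{m,k}$ is not by itself enough; I also need to control the integral near infinity and to make the domain of ``active'' integration effectively of finite measure up to a polynomial-in-$m$ factor. So I would split $\C^{k+1}\setminus Y_{m,k}$ into a bounded part $\bigl(\C^{k+1}\setminus Y_{m,k}\bigr)\cap D_C(0)$ and the unbounded remainder $\C^{k+1}\setminus D_C(0)$, with $C$ the large constant from \eqref{bab}.

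\textbf{The unbounded part.} On $\C^{k+1}\setminus D_C(0)$ the estimate \eqref{bab} gives
\begin{equation*}
\babs{R_{m,n,k+1}(\lambda,\lambda_1,\ldots,\lambda_k)}\le C'm^{k+1}\bigl(\max\{\babs{\lambda}^2,\ldots,\babs{\lambda_k}^2\}\bigr)^{-m\delta},\quad n\le m\tau+1,
\end{equation*}
and since $G_k$ is bounded, the integral of $\babs{G_kR_{m,n,k+1}}$ over $\C^{k+1}\setminus D_C(0)$ is dominated by $C''m^{k+1}\int_{\C^{k+1}\setminus D_C(0)}\bigl(\max_i\babs{\lambda_i}^2\bigr)^{-m\delta}\dA_{k+1}$, which is finite for $m$ large and $\to 0$ as $m\to\infty$ (this is exactly the tail estimate already noted just before Lemma~\ref{grund} in the text, with $N=0$). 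This handles everything outside a fixed compact polydisc.

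\textbf{The bounded part.} On $\bigl(\C^{k+1}\setminus Y_{m,k}\bigr)\cap D_C(0)$ the domain has Lebesgue measure at most $\Vol(D_C(0))$, a fixed finite constant independent of $m$ and $n$. By Lemma~\ref{fa}, the integrand $G_k(\lambda\1_k+h)R_{m,n,k+1}(\lambda,\lambda\1_k+h)$ converges to $0$ \emph{uniformly} on $\C^{k+1}\setminus Y_{m,k}$ as $m\to\infty$, $\babs{n-m\tau}\le 1$; in particular it converges uniformly to $0$ on this bounded subset. Hence the integral over this part is bounded by $\Vol(D_C(0))\cdot\sup_{\C^{k+1}\setminus Y_{m,k}}\babs{G_kR_{m,n,k+1}}\to 0$. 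Adding the two contributions and noting that the hypothesis $\babs{n-m\tau}\le 1$ implies $n\le m\tau+1$ so that \eqref{bab} applies, we obtain the claimed convergence.

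\textbf{Main obstacle.} The only genuine subtlety is matching the coordinate systems: Lemma~\ref{fa} is phrased in the $(\lambda,h)$ variables and on $\C^{k+1}\setminus Y_{m,k}$, whereas \eqref{bab} is phrased in the $(\lambda,\lambda_1,\ldots,\lambda_k)$ variables on $\C^{k+1}\setminus D_C(0)$. Since the change of variables $\lambda_i=\lambda+h_i$ has unit Jacobian (as already remarked in the text, $\dA_{k+1}(\lambda,\lambda_1,\ldots,\lambda_k)=\dA_{k+1}(\lambda,h)$), this is harmless: a point with $\max_i\babs{\lambda_i}^2> C$ is automatically outside a correspondingly large polydisc in the $h$-picture, and one simply needs to make sure the splitting is done consistently in one fixed coordinate system before invoking the two estimates. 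No new ideas beyond this bookkeeping are required; everything else is a citation of Lemma~\ref{fa} and of \eqref{bab}.
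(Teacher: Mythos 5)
Your proposal is correct and takes essentially the same approach as the paper: the paper's own (very terse) proof likewise uses the tail estimate \eqref{bab} to show the integral over $\C^{k+1}\setminus D_C(0)$ tends to zero, and then invokes Lemma~\ref{fa} for the remaining bounded part of $\C^{k+1}\setminus Y_{m,k}$. Your added bookkeeping about the unit-Jacobian coordinate change and the boundedness of $G_k$ is a correct, if unstated, part of the paper's argument.
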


\begin{proof} It follows from \eqref{bab} that the
integrals
\begin{equation*}I_m=\int_{\C^{k+1}\setminus
D_C(0)}~G_k(\lambda\1_k+h)~R_{m,n,k+1}(\lambda,\lambda\1_k+h)~\dA_{k+1}(\lambda,h)\end{equation*}
converge absolutely for large enough $m$ and $C$ if $n\le m\tau+1$,
and $I_m\to 0$ as $m\to \infty$ and $n\le m\tau+1$. The statement
now follows from Lemma \ref{fa}.
\end{proof}

\begin{rem}\label{throw} Suppose that $P(\lambda,h)$ is a measurable function on $\C^{k+1}$ such that (i) $P(\lambda,h)\equiv 0$ when $\lambda\not\in \supp g$
and (ii) $\babs{P(\lambda,h)}\le C\lpar 1+\babs{h}^2\rpar^N$ for some constants $C$ and $N$. (We write $\babs{h}$ for the $\ell^2$ norm on $\C^k$, so that $\babs{h}_\infty^2\le \babs{h}^2\le k\babs{h}_\infty^2$.)

As above, we can then conclude that
\begin{equation}\label{sista}\int_{\C^{k+1}\setminus Y_{m,k}}P(\lambda,h)~R_{m,n,k+1}(\lambda,\lambda\1_k+h)~\dA_{k+1}(\lambda,h)\to 0,\quad\text{as}\quad m\to\infty,\, \babs{n-m\tau}\le 1.\end{equation}
Indeed, \eqref{sista} follows from Lemma \ref{grund}, if we use also the estimate \eqref{ops} to estimate the part of integral over $\babs{h}\ge C$ for $C$ large enough.
The details of a proof parallel our proof of Lemma \ref{gurka}, but are simpler in the present case, since $U_g(\lambda)=0$ when $P\ne 0$.
\end{rem}

\section{Conclusion of the proof of Theorem \ref{mth}} \label{puuh}

In this section, we prove Th.~\ref{mth}. As we have observed earlier, this
theorem implies Th.~\ref{mthm}, and thus the story ends with this
section.

Our proof will be accomplished by estimating the various terms in
the identity
\begin{equation*}\calC_{m,n,k}(g)=A_{m,n}(k)+B_{m,n}(k)+C_{m,n}(k)+D_{m,n}(k)+E_{m,n}(k),\end{equation*}
see \eqref{terms}. We start by considering the ``error-term''
\begin{equation*}E_{m,n}(k)=\int_{\C^{k+1}}r(\lambda,h)~R_{m,n,k+1}(\lambda,\lambda\1_k+h)~\dA_{k+1}(\lambda,h),
\end{equation*}
where $r(\lambda,h)$ is the remainder term of order 3 from Taylor's
formula applied to the function $h\mapsto G_k(\lambda\1_k+h)$ at
$h=0$, see \eqref{t2}. We have that $r(\lambda,h)=G_k(\lambda\1_k+h)-P_2(\lambda,h)$ where $P_2$ is a polynomial of degree $2$ in $h$ with the property
that $P_2(\lambda,h)=0$ when $\lambda\not\in\supp(g)$. It follows from Remark \ref{throw} that, when $m\to\infty$ and $\babs{n-m\tau}\le 1$,  (with $Y_{m,k}$ as
in \eqref{ydef})
\begin{equation}\label{ops}\int_{\C^{k+1}\setminus Y_{m,k}}P_2(\lambda,h)~R_{m,n,k+1}(\lambda,\lambda\1_k+h)~\dA_{k+1}(\lambda,h)\to 0.\end{equation}
Using \eqref{ops} and Lemma \ref{gurka} we conclude that
\begin{equation*}\int_{\C^{k+1}\setminus Y_{m,k}}
r(\lambda,h)~R_{m,n,k+1}(\lambda,\lambda\1_k+h)~\dA_{k+1}(\lambda,h)\to
0,\end{equation*} when $m\to\infty$ and $\babs{n-m\tau}\le 1$. In
order to estimate the integral over $Y_{m,k}$, we first introduce
some notation.

For a measurable subset $\Omega\subset \C^{N}$, let us denote the
(suitably normalized) complex $N$-dimensional volume of $U$ by
$\Vol_{N}(\Omega)=\int_{\Omega}\dA_N(\lambda_1,\ldots, \lambda_N)$.
When $N=1$ we write $\Area(\Omega)$ in stead of $\Vol_1(\Omega)$.

For large $m$, the set $Y_{m,k}$ is contained in the set
\begin{equation*}\left\{(\lambda,h);
\lambda\in \setS_\tau, \babs{h}_\infty\le
Mk\delta_m\right\},\end{equation*} whence
\begin{equation*}\Vol_{k+1}(Y_{m,k})\le \Area(\setS_\tau)~(Mk\delta_m)^{2k}=
C\delta_m^{2k},\end{equation*} with $C$ a number depending on $k$,
$M$ and $\tau$. Furthermore, \eqref{bug} yields that
\begin{equation*}\babs{~R_{m,n,k+1}(\lambda,\lambda \1_k+h))~}\le
Cm^{k+1},\quad n\le m\tau+1,\end{equation*} for all $\lambda$ and
$h$. Now, since $\babs{r(\lambda,h)}\le C\babs{h}^3\le C\delta_m^3$
when $\babs{h}\le Mk\delta_m$, it yields
\begin{equation*}\begin{split}\int_{Y_{m,k}}\babs{~r(\lambda,h)~R_{m,n,k+1}(\lambda,\lambda\1_k+h)}~\dA_{k+1}(\lambda,h)
&\le C\delta_m^3~m^{k+1}~\Vol_{k+1}(Y_{m,k})=\\
&=Cm^{k+1}~\delta_m^{2k+3}=C\log^{2k+3} m/\sqrt{m}.\\
\end{split}\end{equation*}
Hence also the integral over $Y_{m,k}$ converges to $0$ when
$m\to\infty$ and $\babs{n-m\tau}\le 1$. We have shown that
$E_{m,n}(k)\to 0$ as $m\to\infty$ and $\babs{n-m\tau}\le 1$.

We next consider the term
\begin{equation*}D_{m,n}(k)=\int_{\C^2}(\Delta_k
G_k)(\lambda\1_k)~\babs{h_1}^2~
R_{m,n,2}(\lambda,\lambda+h_1)~\dA_2(\lambda,h_1).\end{equation*} In
view of Lemma \ref{zerolap}, we plainly have \begin{equation*}
D_{m,n}(k)=0\quad \text{if}\quad k\ge 3.\end{equation*} It thus
remains to consider the case $k=2$. In this case, Lemma
\ref{zerolap} implies
\begin{equation*}D_{m,n}(2)=\frac 1 2 \int_{\C^2}\babs{\nabla
g(\lambda)}^2~\babs{h}^2~
R_{m,n,2}(\lambda,\lambda+h)~\dA_2(\lambda,h).\end{equation*} It is
clear from Remark \ref{throw} that
\begin{equation}\label{use1}\int_{\babs{h}\ge 2M\delta_m}\babs{\nabla
g(\lambda)}^2
\babs{h}^2R_{m,n,2}(\lambda,\lambda+h)~\dA_2(\lambda,h)\to
0,\end{equation} as $m\to\infty$ and $\babs{n-m\tau}\le 1$. To
estimate the integral over $\{\babs{h}\le 2M\delta_m\}$ we apply the
asymptotics for $R_{m,n,2}$ from eq. \eqref{apu} (with the compact
set $K$ replaced by $\supp g$). It yields that there are numbers
$v_m$ converging to $1$ when $m\to\infty$ such that
\begin{equation}\label{use2}\begin{split}\int_{\babs{h}\le 2M\delta_m}&\babs{\nabla
g(\lambda)}^2 \babs{h}^2R_{m,n,2}(\lambda,\lambda+h)~\dA_2(\lambda,h)=\\
&=v_m m^2\int_{\babs{h}\le 2M\delta_m}\babs{\nabla
g(\lambda)}^2\babs{h}^2\lpar \Delta
Q(\lambda)^2+\calO(\delta_m)\rpar~\e^{-m\Delta
Q(\lambda)\babs{h}^2}\dA_2(\lambda,h)+o(1),\\
\end{split}
\end{equation}
when $m\to\infty$ and $n\ge m\tau-1$. Now, for a fixed
$\lambda\in\supp g$, the change of variables $\xi=\sqrt{m\Delta
Q(\lambda)}h$ shows that
\begin{equation*}\begin{split}
\int_{\babs{h}\le 2M\delta_m} \lpar m\Delta
Q(\lambda)\rpar^2\babs{h}^2\e^{-m\Delta Q(\lambda)\babs{h}^2}\dA(h)
=\int_{\babs{\xi}\le 2M\log m}\babs{\xi}^2
\e^{-\babs{\xi}^2}\dA(\xi)\to 1,\\
\end{split}\end{equation*}
as $m\to\infty$. Hence it follows from \eqref{use1} and \eqref{use2}
that
\begin{equation*}D_{m,n}(2)\to \frac 1 2 \int_\C\babs{\nabla
g(\lambda)}^2\dA(\lambda),\end{equation*} as $m\to \infty$ and
$\babs{n-m\tau}\le 1$.

The complete asymptotics for $D_{m,n}(k)$ has now been settled, and
we turn to the term
\begin{equation*}B_{m,n}(k)=\re\int_{\C^2}S(\lambda)~ h^2~
R_{m,n,2}(\lambda,\lambda+h)~\dA_2(\lambda,h),\end{equation*} where
we have put \begin{equation*}S(\lambda)=\sum_{i=1}^k (\d_i^2
G_k)(\lambda\1_k).\end{equation*} Note that $\supp S\subset \supp
g$. Using Remark \ref{throw}, we obtain (as before) that
\begin{equation*}\int_{\babs{h}\ge 2M\delta_m}S(\lambda)~h^2~
R_{m,n,2}(\lambda,\lambda+h)~\dA_2(\lambda,h)\to 0,\end{equation*} as
$m\to \infty$ and $\babs{n-m\tau}\le 1$. When $\babs{h}\le
2M\delta_m$ we again use the asymptotics in \eqref{apu}, which
yields that there are numbers $v_m$ converging to $1$ as $m\to
\infty$ such that
\begin{equation}\label{use3}\begin{split}&
\int_{\babs{h}\le 2M\delta_m}S(\lambda)~
h^2~R_{m,n,2}(\lambda,\lambda+h)~\dA_2(\lambda,h)=\\
&=v_m m^2\int_{\babs{h}\le 2M\delta_m}S(\lambda)~h^2\lpar\Delta
Q(\lambda)^2+\calO(\delta_m)\rpar~\e^{-m\Delta
Q(\lambda)\babs{h}^2}~\dA_2(\lambda,h)+o(1).\\
\end{split}
\end{equation}
Now, using that, for a fixed $\lambda\in\supp g$,
\begin{equation*}\begin{split}&\int_{\babs{h}\le 2M\delta_m}(m\Delta
Q(\lambda))^2h^2\e^{-m\Delta Q(\lambda)\babs{h}^2}\dA(h)
=\int_{\babs{\xi}\le 2M\log m}\xi^2\e^{-\babs{\xi}^2}\dA(\xi)=0,\\
\end{split}\end{equation*}
 we infer that $B_{m,n}(k)\to 0$ for all $k\ge 2$ as $m\to \infty$
and $\babs{n-m\tau}\le 1$.

There remains to estimate the terms $A_{m,n}(k)$ and $C_{m,n}(k)$.
These terms are a little more complicated than the previous ones
since they are defined as integrals over $\C^3$ and not over $\C^2$.
We first turn to the term $A_{m,n}(k)$ which we now write in the
form
\begin{equation*}A_{m,n}(k)=\frac 1 2 \int_{\C^3}\lpar
T(\lambda)h_1h_2+\overline{T(\lambda)}\bar{h}_1\bar{h}_2\rpar
R_{m,n,3}(\lambda,\lambda+h_1,\lambda+h_2)~\dA_3(\lambda,h_1,h_2),\end{equation*}
where we have put
\begin{equation*}T(\lambda)=\sum_{i\ne j} (\d_i\d_j
G_k)(\lambda\1_k).\end{equation*} It is clear that $\supp
T\subset\supp g$. Furthermore, using Remark \ref{throw}, we see as
before that, with $h=(h_1,h_2)$ and
$\babs{h}_\infty=\max\{\babs{h_1},\babs{h_2}\}$,
\begin{equation*}\int_{\babs{h}_\infty\ge 3M\delta_m}
\re\lpar T(\lambda)h_1h_2\rpar
R_{m,n,3}(\lambda,\lambda+h_1,\lambda+h_2)~\dA_3(\lambda,h_1,h_2)\to
0,\end{equation*} as $m\to\infty$ and $\babs{n-m\tau}\le 1$. When
$\babs{h}_\infty\le 3M\delta_m$, insert the asymptotics for
$R_{m,n,3}$ provided by eq. \eqref{apuu}. It shows that there are
numbers $v_m$ converging  to $1$ as $m\to \infty$ such that
\begin{equation*}\begin{split}&\int_{\babs{h}_\infty\le 3M\delta_m}
T(\lambda)~h_1~h_2~R_{m,n,3}(\lambda,\lambda+h_1,\lambda+h_2)~\dA_3(\lambda,h)=\\
&=m^3v_m\int_{\babs{h}_\infty\le 3M\delta_m}T(\lambda)~h_1~h_2~\lpar\Delta
Q(\lambda)^3+\calO(\delta_m)\rpar~\e^{m\Delta
Q(\lambda)(h_1\bar{h}_2-\babs{h_1}^2-\babs{h_2}^2)}~\dA_3(\lambda,h)+
o(1).\\
\end{split}
\end{equation*}
Now fix $\lambda\in\supp g$ and put $\xi_1=\sqrt{m\Delta
Q(\lambda)}h_1$ and $\xi_2=\sqrt{m\Delta Q(\lambda)} h_2$. We then
have that
\begin{equation*}\begin{split}&m^3v_m
\int_{\babs{h}_\infty\le 3M\delta_m}T(\lambda)\lpar \Delta
Q(\lambda)^3+\calO(\delta_m)\rpar h_1~h_2~\e^{m\Delta
Q(\lambda)(h_1\bar{h}_2-\babs{h_1}^2-\babs{h_2}^2)}~\dA_2(h)=\\
&=T(\lambda)\int_{\babs{\xi}_\infty\le 3M\log m}\lpar 1+\calO(\delta_m)\rpar
\xi_1~\xi_2~\e^{\xi_1\bar{\xi}_2-\babs{\xi_1}^2-\babs{\xi_2}^2}~\dA_2(\xi).\\
\end{split}
\end{equation*}
Thus when we can prove that $J=0$ and $J'=0$ where
\begin{equation}\label{sponk}J=\int_{\C^2}\xi_1~\xi_2~\e^{\xi_1\bar{\xi}_2-\babs{\xi_1}^2-\babs{\xi_2}^2}
~\dA_2(\xi_1,\xi_2)\quad \text{and}\quad
J'=\int_{\C^2}\bar{\xi}_1~\bar{\xi}_2~\e^{\xi_1\bar{\xi}_2-\babs{\xi_1}^2-\babs{\xi_2}^2}~
\dA_2(\xi_1,\xi_2)
\end{equation}
we will obtain the result that $A_{m,n}(k)\to 0$ as $m\to\infty$ and
$\babs{n-m\tau}\le 1$ for all $k\ge 2$.

The argument for $J'$ is similar so we settle for proving that
$J=0$. To this end, we write the integral in polar coordinates:
\begin{equation*}J=\frac 1 {\pi^2}\int_0^\infty\int_0^\infty I(r,\rho)~\diff r~\diff
\rho,\end{equation*} where
\begin{equation*}I(r,\rho)=\int_0^{2\pi}\int_0^{2\pi}
(r\rho)^2~\e^{\imag(\theta+\phi)}~\e^{r\rho\e^{\imag(\theta-\phi)}-r^2-\rho^2}~\diff\phi~\diff
\theta.\end{equation*} Performing the change of variables
$\vt=\theta+\pi/2$ and $\vf=\phi+\pi/2$, the latter integral
transforms to
\begin{equation*}I(r,\rho)=\int_0^{2\pi}\int_0^{2\pi}(r\rho)^2~
\e^{\imag(\pi+\vt+\vf)}~\e^{r\rho\e^{\imag(\vt-\vf)}-r^2-\rho^2}~\diff\vt~\diff\vf=-I(r,\rho).\end{equation*}
Hence $I(r,\rho)=0$ for all $r$ and $\rho$ and it follows that
$J=0$.

There remains to consider the term
\begin{equation*}C_{m,n}(k)= \int_{\C^3}\lpar
Z_k(\lambda)h_1\bar{h}_2+\overline{Z_k(\lambda)} \bar{h}_1h_2\rpar~
R_{m,n,3}(\lambda,\lambda+h_1,\lambda+h_2)~\dA_3(\lambda,h_1,h_2),\end{equation*}
where
\begin{equation*}Z_k(\lambda)=\sum_{i< j}(\d_i\dbar_j
G_k)(\lambda\1_k).\end{equation*} Observing that $\supp
Z_k\subset\supp g$ and arguing is in the case of $A_{m,n}(k)$, it is
seen that
\begin{equation*}\int_{\babs{h}_\infty\ge 3M\delta_m}
Z_k(\lambda)~h_1~\bar{h}_2~R_{m,n,3}(\lambda,\lambda+h_1,\lambda+h_2)~
\dA_3(\lambda,h_1,h_2)\to 0,
\end{equation*}
as $m\to\infty$ and $\babs{n-m\tau}\le 1$. Hence, using
\eqref{apuu}, we obtain that the asymptotics of $C_{m,n}(k)$ is that
of $C_{m,n}^\prime(k)+C_{m,n}^{\prime\prime}(k)$ where
\begin{equation*}\begin{split}C_{m,n}^\prime(k)&=\int_{\babs{h}_\infty\le 3M\delta_m}
Z_k(\lambda)~h_1~\bar{h}_2~R_{m,n,3}(\lambda,\lambda+h_1,\lambda+h_2)~
\dA_3(\lambda,h_1,h_2)=\\
&=m^3 v_m\int_{\babs{h}_\infty\le 3M\delta_m}
Z_k(\lambda)~h_1~\bar{h}_2~ \lpar \Delta Q(\lambda)^3+\calO(\delta_m)\rpar~
\e^{m\Delta Q(\lambda)(h_1\bar{h}_2-\babs{h_1}^2-\babs{h_2}^2)}~
\dA_3(\lambda,h)=\\
&=v_m\int_\C Z_k(\lambda)\lpar \int_{\babs{\xi}_\infty\le 3M\log m}
\lpar 1+\calO(\delta_m)\rpar~\xi_1~\bar{\xi}_2~
\e^{\xi_1\bar{\xi}_2-\babs{\xi_1}^2-\babs{\xi_2}^2}~\dA_2(\xi_1,\xi_2)\rpar~
\dA(\lambda),\\
\end{split}\end{equation*}
and (likewise)
\begin{equation}\label{ppeq}C_{m,n}^{\prime\prime}(k)=v_m\int_\C \overline{Z_k(\lambda)}~\lpar \int_{\babs{\xi}_\infty\le 3M\log m}
\lpar 1+\calO(\delta_m)\rpar ~\bar{\xi}_1~\xi_2~
\e^{\xi_1\bar{\xi}_2-\babs{\xi_1}^2-\babs{\xi_2}^2}~\dA_2(\xi_1,\xi_2)\rpar~
\dA(\lambda),\end{equation} where $v_m\to 1$ as $m\to \infty$.

We first claim that $C_{m,n}^\prime(k)\to 0$ when $m\to\infty$ and
$\babs{n-m\tau}\le 1$ for all $k\ge 2$. We will have shown that when
we can prove that $L'=0$ where
\begin{equation*}L'=\int_{\C^2}
\xi_1\bar{\xi}_2\e^{\xi_1\bar{\xi}_2-\babs{\xi_1}^2-\babs{\xi_2}^2}\dA_2(\xi_1,\xi_2).\end{equation*}
To prove this, we pass to polar coordinates and write
\begin{equation*}L'=\frac 1 {\pi^2}\int_0^\infty\int_0^\infty P(r,\rho)\diff r\diff\rho,
\end{equation*}
where
\begin{equation*}P(r,\rho)=\int_0^{2\pi}\int_0^{2\pi}
(r\rho)^2~\e^{\imag(\theta-\phi)}~\e^{r\rho\e^{i(\theta-\phi)}-r^2-\rho^2}~
\diff\theta~\diff\phi.\end{equation*} Making the change of variables
$\vt=\theta-\phi$ and $\vf=\phi$, the integral transforms to
\begin{equation*}P(r,\rho)=\e^{-r^2-\rho^2}
\int_0^{2\pi}\lpar \int_{-\vf}^{2\pi-\vf} (r\rho)^2\e^{\imag
\vt}\e^{r\rho\e^{\imag\vt}}\diff\vt\rpar\diff\vf.\end{equation*} But
the inner integral is readily calculated,
\begin{equation*}\int_{-\vf}^{2\pi-\vf}
(r\rho)^2\e^{\imag \vt}\e^{r\rho\e^{\imag\vt}}\diff\vt=\biggl[
-\imag r\rho\e^{r\rho\e^{\imag\vt}}\biggr]_{\vt=-\vf}^{2\pi-\vf}=0.
\end{equation*}
This shows that $P(r,\rho)=0$ and consequently $L'=0$. It follows
that $C_{m,n}^\prime(k)\to 0$ as $m\to\infty$ and $\babs{n-m\tau}\le
1$ for all $k\ge 2$.

To handle the term $C_{m,n}^{\prime\prime}(k)$, it becomes necessary
to calculate
\begin{equation*}L^{\prime\prime}=\int_{\C^2}
\bar{\xi}_1\xi_2\e^{\xi_1\bar{\xi}_2-\babs{\xi_1}^2-\babs{\xi_2}^2}\dA_2(\xi_1,\xi_2).\end{equation*}
Again passing to polar coordinates, we write
\begin{equation*}L^{\prime\prime}=\frac 1 {\pi^2}\int_0^\infty\int_0^\infty
W(r,\rho)~\diff r~\diff\rho,\end{equation*} where
\begin{equation*}W(r,\rho)=\e^{-r^2-\rho^2}\int_0^{2\pi}
\int_{0}^{2\pi}(r\rho)^2~\e^{\imag(\theta-\phi)}~\e^{r\rho\e^{\imag(\phi-\theta)}}~\diff\phi~\diff\theta=2\pi\e^{-r^2-\rho^2}
\int_0^{2\pi}(r\rho)^2~\e^{-i\vt}~\e^{r\rho\e^{i\vt}}~\diff\vt.\end{equation*}
We now put $z=\e^{\imag\vt}$ and use a simple residue argument to
get
\begin{equation*}W(r,\rho)=\frac {2\pi (r\rho)^2~\e^{-r^2-\rho^2}} \imag \int_{\T}\frac {1} {z^2}\e^{r\rho z}~\diff
z=4\pi^2 (r\rho)^3~\e^{-r^2-\rho^2}.\end{equation*} It follows that
\begin{equation}\label{stam}\begin{split}L^{\prime\prime}&=4\int_0^\infty\int_0^\infty
(r\rho)^3~\e^{-r^2-\rho^2}~\diff r~\diff\rho=1.\\
\end{split}
\end{equation}

For $k=2$ it now follows from \eqref{stam}, \eqref{ppeq} and Lemma
\ref{zgr} that
\begin{equation*}C_{m,n}^{\prime\prime}(2)\to -\int_\C\babs{~\dbar
g(\lambda)~}^2~\dA(\lambda),\end{equation*} when $m\to\infty$ and
$\babs{n-m\tau}\le 1$. On the other hand when $k\ge 3$ we get that
\begin{equation}\label{skurm}\lim_{m\to\infty,\, \babs{n-m\tau}\le 1}C_{m,n}^{\prime\prime}(k)=
\int_\C\overline{Z_k(\lambda)}~\dA(\lambda)\end{equation} is pure
imaginary, again by Lemma \ref{zgr}. In fact this shows that the
limit in \eqref{skurm} must vanish, because the cumulant
$\calC_{m,n,k}(g)$ is real and all other terms in the expansion (in
Lemma \ref{churm}) but $C_{m,n}(k)$ have already been shown to be
real (in fact zero) in the limit when $m\to\infty$ and
$\babs{n-m\tau}\le 1$.

The proofs of all statements are now
complete. \hfill q.e.d.

\section{Concluding remarks}\label{conclu}

We conclude this paper with a series of remarks concerning possible applications and generalizations of the main theorem.
We also outline an alternative approach to the proof of Th. \ref{mthm}.

\subsection{Non-analytic potentials} \label{nap} Recall that we proved Th.~\ref{mthm} assuming that the potential $Q$ is \textit{real-analytic}
in some neighbourhood of $\setS_\tau$. It is possible to extend
this result to more general smooth potentials. Assuming that $Q$ is
$\calC^\infty$-smooth, one defines the auxiliary functions $\psi$,
$b_0$ and $b_1$ in the expression
\begin{equation*}K_m^1(z,w)=\lpar
mb_0(z,\bar{w})+b_1(z,\bar{w})\rpar\e^{m\psi(z,\bar{w})}\end{equation*}
as any fixed almost-holomorphic extensions from the anti-diagonal of
$Q$, $\Delta Q$ and $\frac 1 2 \Delta\log\Delta Q$ respectively. For
example, in the case of $\psi$ this means that $\psi$ is
well-defined and smooth in a neighbourhood of the anti-diagonal in
$\C^2$, and (i) $\psi\lpar z,\bar{z}\rpar=Q(z)$, (ii) the anti-holomorphic derivatives
$\dbar_i\psi$ vanish to infinite order at each point of the anti-diagonal, $i=1,2$,
and (iii) $\psi(z,w)=\overline{\psi(\bar{w},\bar{z})}$ whenever the
expressions make sense. Lemma \ref{klam}
extends to this more general situation; the proof is not very different from the argument in  \cite{AH} but it involves some additional technical work. The rest of the proof of Th.~\ref{mthm} for smooth potentials  requires only minor
changes.

As  we mentioned earlier, the smoothness (or analyticity) condition is "local" -- we need it only in some neighborhood of the droplet. In particular,
Theorem \ref{mthm} is true for
 potentials $Q:\C\to \R\cup\{+\infty\}$ of the form
$$Q(z)=Q_0(z)+\int_\C\log\frac 1 {\babs{z-z_0}^2}\diff\mu(z_0),$$ where $Q_0$ is a smooth function (with sufficient growth at infinity), and $\mu$ is a positive,
finitely supported measure (linear combination of
Dirac measures). In this case the droplet $\setS$ is disjoint from $\supp\mu$, and so the "local" smoothness condition holds. (We will need this observation later.)

\subsection{Variational approach} \label{var} Here we sketch a different, more "physical'' proof of  our main result, Th \ref{mthm}. The proof is based on a variational argument well known in the physical literature, see e.g. the papers of Wiegmann and Zabrodin.
In the rigorous mathematical setting, this method was developed by
Johansson in the one-dimensional case, see \cite{J}.

We will use
the fact that the estimate \eqref{popp} for
$K_{m,n}(z,z)\e^{-mQ(z)}$ is uniform when we make small {\it smooth}
perturbations of the potential $Q$. We will also need some basic facts
concerning the variation of the droplet under the change of potential (Hele--Shaw theory).  Modulo  these technical issues (see Remark \ref{end})  the proof of the theorem is rather  short.

To simplify the notation we assume  $m=n$ and $\tau=1$
and write $K_n$ instead of $K_{n,n}$, etc.
Let  $h:\C\to\R$  be a bounded smooth function. We denote, for a positive integer $n$,
\begin{equation*}Q_{n}(z)=Q(z)-\frac{h(z)}n,\end{equation*}
and we will use "tilde-notation'' for various objects defined w.r.t.
the weight $Q_n$. Thus $\wt{K}_{n}$ is the kernel function w.r.t.
$Q_{n}$ etc., while the usual notation ($K_n$, etc.) is reserved for
the weight $Q$.

It is known that, for any $K\Subset\setS_1^\circ\cap X$, the
coincidence set
$\{Q_n=\lpar\widehat{Q}_n\rpar_1\}$, and therefore the perturbed droplet, will contain
$K$ in its interior when $n$ is large enough. One can then prove that
\begin{equation}\label{skum}\wt{K}_{n}(z,z)\e^{-nQ_n(z)}=n\Delta Q_n(z)+\frac 1 2
\Delta\log\Delta Q_n(z)+o(1),\qquad (n\to\infty),\end{equation} for $z\in K$, and that
the $o(1)$-term is uniform in  $z$.

Let $g\in\coity\lpar\setS_1^\circ\cap X\rpar$, so we have
\begin{equation*}\wt{K}_{n}(z,z)\e^{-nQ_n(z)}=n\Delta Q(z)- \Delta
h(z)+\frac 1 2 \Delta\log\Delta Q(z)+o(1)\end{equation*}
 uniformly for $z\in \supp g$.  We define
\begin{equation*}D_{n}^h[g]=\wt{E}_{n}\lpar \fluct_n
g\rpar.\end{equation*} If $V$ denotes the Vandermonde
determinant, we then have (see \eqref{zmno} and \eqref{vander})
\begin{equation*}\begin{split}D_{n}^{h}[g]&=\frac {\int_{\C^n}\fluct_n g\cdot
\babs{V}^2\e^{-n\trace_n Q_n}\dA_n} {\int_{\C^n}
\babs{V}^2 \e^{-n\trace_n Q_n}\dA_n}=\\
&=\frac {\int_{\C^n}\fluct_n g\cdot \e^{\trace_n
h}\babs{V}^2\e^{-n\trace_n Q}\dA_n} {\int_{\C^n} \e^{\trace_n
h}\babs{V}^2\e^{-n\trace_n Q}\dA_n} =\frac {E_{n}\lpar \fluct_n
g\cdot \e^{\trace_n h}\rpar} {E_n\lpar
\e^{\trace_n h}\rpar}.\\
\end{split}\end{equation*}
We now fix a real-valued $g$ and set
\begin{equation}\label{babb}h=\lambda \lpar g-\int g\Delta
Q\dA\rpar,\end{equation} where $\lambda$ is a real number, so that $$\trace_n h=\lambda \fluct_n g.$$
We have
\begin{equation*}D_{n}^{h}[g]=\frac {E_{n}\lpar \fluct_n g\cdot \e^{\lambda\fluct_n g}\rpar} {E_n\lpar
\e^{\lambda\fluct_n g}\rpar}=F_n^\prime(\lambda),\qquad
\text{where}\qquad F_{n}(\lambda):=\log \lpar
E_{n}\e^{\lambda\fluct_n g}\rpar.\end{equation*} Now from
\eqref{skum} we see that
\begin{equation*}\begin{split}D_{n}^{h}[g]&=\int_\C
g(z)\wt{K}_{n}(z,z)\e^{-nq_n(z)}\dA(z)-n\int_\C g\Delta Q\dA=\\
&=-\int\Delta h\cdot g\dA+\int
g\,\diff\nu+o(1)\to \int \d h \cdot\dbar g\dA+\int g\,\diff\nu.\\
\end{split}\end{equation*}
It follows from \eqref{babb} that
\begin{equation*}F_{n}^\prime(\lambda)\to \int
g\diff\nu+\frac \lambda {4}\int\babs{\nabla g}^2\dA\qquad
\text{as}\quad n\to\infty.\end{equation*} The last relation can
be integrated over $\lambda\in[0,1]$. This is justified by dominated
convergence and the estimate $F_{n}^{\prime\prime}\ge 0$, which is
just the Cauchy--Schwarz inequality. It follows that
\begin{equation*}\log E_{n}\e^{\fluct_n g}=F_{n}(1)=\int_0^1
F_{n}^\prime(\lambda)\diff\lambda\to \int g\diff\nu +\frac 1
8\int\babs{\nabla g}^2\dA\end{equation*} when $n\to\infty$. This
means that $$\log E_n\e^{t\fluct_n g}\to te_g+t^2v_g^2/2$$ for all
suitable scalars $t$, which in turn implies Th. \ref{mthm}.

\begin{rem} \label{end} We have discussed two rather different proofs for our main result Th. \ref{mthm}. We remark that, in the (most interesting) case when the potential is real-analytic in a neighbourhood of the droplet, the theory of asymptotic expansions for the correlation kernel is somewhat simpler and cleaner than in the smooth case. In the variational proof we need to make a smooth perturbation of the potential, and so we need a discussion of the smooth theory even in cases when the potential is real analytic. One would also need to include a further discussion of Hele-Shaw theory to make the variational proof
complete. We will discuss the variational approach in greater detail in our forthcoming paper \cite{AHM2}.
\end{rem}

\subsection{Interpretation in terms of Gaussian fields}\label{interp}
Denote $U=\setS_1^\circ\cap X$ and let $\calW_0(U)=W^{1,2}_0(U)$ be the
completion of $\coity(U)$ under the Dirichlet inner product
\begin{equation*}\langle
f,g\rangle_\nabla=\int_\C \nabla f\cdot\overline{\nabla g}~\dA.\end{equation*} Let
$G$ be the Green's function for $U$ and denote by
$\calE(U)=W^{-1,2}(U)$ the Hilbert space of
distributions  with inner product
\begin{equation*}\langle \rho_1,\rho_2\rangle_\calE=\int_U\int_U
G(z,w)~\diff\rho_1(z)~\diff\bar\rho_2(w).\end{equation*}
(More accurately, $\calE(U)$ is the completion of the space of measures with finite $\calE$-norm.) We have an
isomorphism
\begin{equation*}\Delta_U:\calW_0(U)\to\calE(U),\end{equation*} where
$\Delta_U=\d\dbar$ is the (Dirichlet) Laplacian. The inverse map is
given by the Green potential
\begin{equation*}-\frac 1 2 \Delta_U^{-1}\rho=U_G^\rho\qquad
\text{where}\qquad U_G^\rho(z)=\int_U
G(z,w)\diff\rho(w).\end{equation*} By a \textit{Gaussian field} indexed by
$\calW_0(U)$ we mean an isometry
\begin{equation*}\Gamma:\calW_0(U)\to L^2(\Omega,P),\end{equation*}
where $(\Omega,P)$ is some probability space, and $\Gamma(g)\sim
N\lpar 0,\|g\|_\nabla^2\rpar$ for any $g\in\calW_0(U)$. We now pick
$(\lambda_j)_1^n$ randomly w.r.t. $\Pi_{n,n}$ and consider the
sequence of random fields (measures)
\begin{equation*}\Gamma_n=4\lpar\sum_{j=1}^n
\delta_{\lambda_j}-n\sigma_1-\nu\rpar,\end{equation*} which satisfy
$$\Gamma_n(g)=4\lpar \fluct_n g-\int g\diff\nu\rpar.$$ Thus Th.
\ref{mthm} implies that  as $n\to\infty$, the fields $\Gamma_n$ converge to a Gaussian
field $\Gamma$  indexed by $\calW_0(U)$. The precise
meaning of the field convergence is convergence of the correlation functions:
\begin{equation}\label{wick}E_n\lpar \Gamma_n(g_1)\cdots \Gamma_n(g_k)\rpar\to
\langle \Gamma(g_1)\cdots\Gamma(g_k)\rangle\end{equation} for all
finite collections of test functions $\{g_j\}\subset\coity(U)$.
The right hand side in \eqref{wick} is given by the Wick's
formulas
\begin{equation*}\langle\Gamma(g_1)\cdots\Gamma(g_{2p+1})\rangle=0\end{equation*}
and
\begin{equation*}\langle\Gamma(g_1)\cdots\Gamma(g_{2p})\rangle=\sum\prod_{k=1}^p\langle
g_{i_k},g_{j_k}\rangle_\nabla,\end{equation*} where the sum is over
all partitions of $\{1,\ldots,2p\}$ into $p$ disjoint pairs
$(i_k,j_k)$.

Using the identifications mentioned above, we obtain the following
result.

\begin{prop} The random functions
\begin{equation*}h_n(z)=2\lpar\sum_{j=1}^n
G(z,\lambda_j)-U_G^{n\sigma_1+\nu}(z)\rpar,\end{equation*} converge in $U$
to a  Gaussian free field  with Dirichlet boundary condition, i.e.  to a Gaussian field indexed by $\calE(U)$.
\end{prop}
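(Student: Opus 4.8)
The plan is to deduce the proposition from the field convergence $\Gamma_n\to\Gamma$ already recorded in \eqref{wick}, by noting that $h_n$ is, up to a fixed numerical factor, the Green potential of $\Gamma_n$, and that the Green potential is (a constant multiple of) the inverse of the isomorphism $\Delta_U\colon\calW_0(U)\to\calE(U)$. First I would recall why \eqref{wick} holds: since $g\mapsto\fluct_n g$ is linear, $\sum_i c_i\fluct_n g_i=\fluct_n(\sum_i c_i g_i)$, so the Cram\'er--Wold device together with Th.~\ref{mthm} (and the fact that the cumulant method of Sections~\ref{FACT}--\ref{puuh} actually yields convergence of every cumulant, hence of every moment) shows that the vector $(\fluct_n g_i-\int g_i\,\diff\nu)_i$ converges, with all mixed moments, to a centered Gaussian vector with covariance $\tfrac14\langle g_i,g_j\rangle_\nabla$ obtained by polarizing $v_g^2=\tfrac14\|g\|_\nabla^2$ (its expectation tends to $0$ by Th.~\ref{t0}); the limiting mixed moments are then given by Wick's formula, which is \eqref{wick}.

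Next I would record the identity linking $h_n$ to $\fluct_n$. For real $g\in\coity(U)$, two integrations by parts (the boundary terms vanishing since $g$ is compactly supported in $U$ and $w\mapsto G(\cdot,w)$ vanishes on $\d U$) give $\int_U U_G^\mu(z)\,\Delta g(z)\,\dA(z)=-\tfrac12\int_U g\,\diff\mu$ for every signed measure $\mu$ on $U$. Applying this with $\mu=\sum_j\delta_{\lambda_j}-n\sigma_1-\nu=\tfrac14\Gamma_n$ and the definition of $h_n$ yields $\int_U h_n\,\Delta g\,\dA=-\bigl(\fluct_n g-\int g\,\diff\nu\bigr)=-\tfrac14\Gamma_n(g)$. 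Declaring, for $\rho=\Delta_U g$ with $g\in\coity(U)$, that $\langle h_n,\rho\rangle:=\int_U h_n\,\Delta g\,\dA$, we thus have $\langle h_n,\Delta_U g\rangle=-\tfrac14\Gamma_n(g)$, while the same computation gives $\langle\Delta_U g_i,\Delta_U g_j\rangle_\calE=\kappa\,\langle g_i,g_j\rangle_\nabla$ for a fixed constant $\kappa$, i.e. $\Delta_U$ is, up to a scalar, an isometry of $\calW_0(U)$ onto $\calE(U)$. Hence $E_n(\prod_i\langle h_n,\Delta_U g_i\rangle)=(-\tfrac14)^k E_n(\prod_i\Gamma_n(g_i))$ converges by \eqref{wick}, and a bookkeeping of the normalizing constants shows the limit is exactly the Wick sum for a Gaussian field indexed by $\calE(U)$, evaluated at $\Delta_U g_1,\dots,\Delta_U g_k$. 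Since $\coity(U)$ is dense in $\calW_0(U)$ and $\Delta_U$ is an isomorphism, $\{\Delta_U g:g\in\coity(U)\}$ is dense in $\calE(U)$; granting the extension below, this is precisely the statement that $h_n$ converges, in the sense of convergence of all correlation functions, to the Dirichlet Gaussian free field on $U$. (More abstractly: $h_n=-\tfrac14\Delta_U^{-1}\Gamma_n$ with $\Delta_U^{-1}$ continuous, so $h_n$ is the image under $\Delta_U^{-1}$ of the convergent sequence $\Gamma_n$, with limit $-\tfrac14\Delta_U^{-1}\Gamma$.)

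The step I expect to be the only non-formal one is the passage from the dense family $\{\Delta_U g:g\in\coity(U)\}$ to arbitrary test elements of $\calE(U)$ --- equivalently, testing $h_n$ against a fixed $\phi\in\coity(U)$, which via $\int_U h_n\phi\,\dA=2\bigl(\fluct_n\tilde\phi-\int\tilde\phi\,\diff\nu\bigr)$ with $\tilde\phi=U_G^{\phi\,\dA}\in W^{1,2}_0(U)$ forces one to apply Th.~\ref{mthm} to a test function that is no longer compactly supported. To handle this I would first establish a bound $\calC_{m,n,2}(g)=\Var_{m,n}(\trace_n g)\le C\|g\|_\nabla^2$ that is \emph{uniform} in $n$ with $\babs{n-m\tau}\le 1$, then obtain the corresponding uniform control of all higher correlation functions by an elementary hypercontractivity-type induction, and finally close the gap by density. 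Such a uniform $L^2$-estimate is implicit in the analysis already carried out: the off-diagonal damping of Lemma~\ref{grund} and the near-diagonal asymptotics \eqref{apu} for $R_{m,n,2}$ reduce $\calC_{m,n,2}(g)$ to a Gaussian integral of size $O(\|g\|_{W^{1,2}}^2)$ with constants independent of $n$. Making this uniformity explicit is, I expect, the main obstacle; once it is available, the proposition follows from \eqref{wick} and the identities above.
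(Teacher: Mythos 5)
Your argument is exactly the unpacking the paper intends: the proposition is a restatement, through the isomorphism $\Delta_U\colon\calW_0(U)\to\calE(U)$ and the Green-potential identity $-\tfrac12\Delta_U^{-1}\rho=U_G^\rho$, of the field convergence $\Gamma_n\to\Gamma$ already obtained from Theorem~\ref{mthm}, and your computation $\int_U h_n\,\Delta g\,\dA=-\tfrac14\Gamma_n(g)$ for $g\in\coity(U)$ (together with $\langle\Delta_U g_1,\Delta_U g_2\rangle_\calE=\text{const.}\cdot\langle g_1,g_2\rangle_\nabla$) is precisely what the paper means by ``using the identifications mentioned above.''

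The one place where you are over-cautious is the final paragraph. In the paper's framework, convergence to a Gaussian field is \emph{defined} (cf.~\eqref{wick}) as convergence of all mixed moments when paired against a dense family of test objects; for a field indexed by $\calE(U)$, the natural dense family is $\{\Delta_U g:g\in\coity(U)\}$, and on exactly that family \eqref{wick} applies after the change of variables you wrote down. Pairing $h_n$ directly with $\phi\in\coity(U)$, which forces $g=U_G^{\phi\,\dA}\notin\coity(U)$, is a genuinely \emph{stronger} mode of convergence than the proposition asserts, so there is no gap to close at this stage. The uniform bound $\Var_n(\trace_n g)\lesssim\|g\|_\nabla^2$ you sketch (via Lemma~\ref{grund} and \eqref{apu}) would indeed be the right tool for such an upgrade, but it is material for the boundary theorem~\ref{grth} rather than for the bulk proposition as stated; since the limiting field $\Gamma\circ\Delta_U^{-1}$ (up to the fixed scalar) is automatically an isometry on all of $\calE(U)$, nothing further is needed here.
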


Alternatively, if we pick $(\lambda_j)$ and $(\lambda_j^\prime)$
independently w.r.t. $\Pi_{n,n}$ then the random functions
\begin{equation*}\tilde{h}_n(z)=\sum_{j=1}^n \lpar
G(z,\lambda_j)-G(z ,\lambda_j^\prime)\rpar\end{equation*} converge to a  Gaussian free field with Dirichlet boundary condition.

\subsection{Fluctuations near the boundary}\label{flubou}

In a separate publication \cite{AHM2} we will prove a version of Th. \ref{mthm}
valid for general test functions, which are not necessarily
supported in the droplet but just, say, of class
$\calC_0^\infty(\C)$. The proof is based on Ward's identities and Johansson's variational technique  mentioned above. Here we only settle for stating the result.

We assume throughout that $Q$ is real-analytic and strictly subharmonic in some neighbourhood
of the droplet $\calS=\calS_1$. One can then prove that the boundary $\d\calS$ is
\textit{regular}, i.e., a finite union of real-analytic curves. We
will write $\diff s$ for the arclength measure on $\d\setS_1$
divided by $2\pi$. Denote
\begin{equation*}U=\setS^\circ\qquad \text{and}\qquad
U_*=\C\setminus\setS.\end{equation*} We then have an orthogonal decomposition of the Sobolev space  $\calW=W^{1,2}(\C)$,
\begin{equation*}\calW=\calW_0(U)\oplus \calW\lpar \d
\calS\rpar\oplus\calW_0(U_*).\end{equation*} Here $\calW_0(U)$ and
$\calW_0(U_*)$ are identified with the subspaces of functions which
are (quasi-everywhere) zero in the complement of $U$ and $U_*$ respectively, while
the subspace $\calW\lpar \d\calS\rpar$ consists of the functions
which are harmonic off  $\d\calS$.
The orthogonal projection of $\calW$ onto $\calW(\d\setS)$,
\begin{equation*}f\mapsto f^{\d\calS},\end{equation*}
is just the composition of the restriction operator $f\mapsto
f\big|_{\d\calS}$ and the operation of harmonic extension to
$U\cup U_*\cup\{\infty\}$. For $f\in\calW$ we also denote by
$f^{\calS}$ the orthogonal projection of $f$ onto
$\calW_0(U)\oplus\calW\lpar \d\calS\rpar$,
\begin{equation*}f^{\calS}={\bf 1}_{\calS}\cdot f+{\bf
1}_{U_*}\cdot f^{\d\calS},\end{equation*} in other words, $f^{\calS}$
coincides with $f$ on $\calS$ and is harmonic and bounded in the
complement of that set.

 Finally, we write $n_U f$ for
the exterior normal derivative of $f\big|_{\setS}$ and $n_{U_*}f$
the exterior normal derivative of
$f^{\d\setS}\big|_{U_*}$.
We can now state the theorem.

\begin{thm} \label{grth} Let $f\in\coity(\C)$. Then the random variables $\fluct_n f$ on the
space $(\C^n,\Pi_{n,n})$ converge in distribution
to $N\lpar e_f,v_f^2\rpar$, where
\begin{equation*}v_f^2=\frac 1 4\int\babs{\nabla
\lpar f^{\calS}\rpar}^2\dA,\end{equation*} and
\begin{equation*}e_f=\int_{\calS}f\diff\nu+\frac 1 {4}
\int_{\d\calS}n_U(f)\diff s+\frac 1 {4}\int_{\d\calS}\lpar
f\cdot n_{U_*}\lpar \log\Delta Q\rpar- n_{U_*}\lpar
f^{\d\calS}\rpar\cdot\log\Delta Q\rpar\diff s.\end{equation*}
\end{thm}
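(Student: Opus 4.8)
\textbf{Proof strategy for Theorem \ref{grth}.} The plan is to prove Theorem \ref{grth} by the variational method of \textsection\ref{var} (Johansson's technique), now combined with Ward's identities and Hele--Shaw perturbation theory in order to control the part of the test function that meets $\d\setS$. Throughout we take $m=n$, $\tau=1$, write $\setS=\setS_1$, $U=\setS^\circ$, $U_*=\C\setminus\setS$, and use that $\d\setS$ is regular -- a finite union of real-analytic arcs -- which follows from the assumed real-analyticity and strict subharmonicity of $Q$ near $\setS$ by Sakai's regularity theory. Fix a real-valued $f\in\coity(\C)$ and, for $s\in[0,1]$, put $Q_s=Q-sf/n$; decorate by a tilde the objects attached to $Q_s$ (kernel $\wt K_n$, droplet $\wt\setS_n$, one-point density $\wt\rho_n(z)=\wt K_n(z,z)\e^{-nQ_s(z)}$, equilibrium measure, etc.). Exactly as in \textsection\ref{var}, with $F_n(s)=\log E_n\e^{s\fluct_n f}$ one has $F_n'(s)=\wt E_n(\fluct_n f)$, the expectation taken in the $Q_s$-ensemble but with the recentering constant $n\int f\,\diff\sigma_1$ unchanged, while $F_n''(s)$ is the variance of $\trace_n f$ in the $Q_s$-ensemble, hence $\ge 0$, so $F_n$ is convex. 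It therefore suffices to prove that $F_n'(s)$ converges, boundedly in $s$, to $e_f+s\,v_f^2$: then $F_n(1)=\int_0^1F_n'(s)\,\diff s\to e_f+\frac12 v_f^2$ by convexity and dominated convergence, and, running the same argument with $f$ replaced by $tf$ for $t$ in a bounded range, $\log E_n\e^{t\fluct_n f}\to t\,e_f+\frac12 t^2 v_f^2$ for all $t$ near $0$, which gives convergence in distribution to $N\lpar e_f,v_f^2\rpar$.

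Everything is thereby reduced to the asymptotics of $F_n'(s)=\int_\C f\,\wt\rho_n\,\dA-n\int f\,\diff\sigma_1$. In the interior of $\wt\setS_n$ one has, uniformly and uniformly in $s$, the analogue of \eqref{skum}, $\wt\rho_n=n\,\Delta Q_s+\frac12\Delta\log\Delta Q_s+o(1)$; the new work is the behaviour of $\wt\rho_n$ in a fixed neighbourhood of $\d\setS$. I would obtain this from the exact Ward identity for the ensemble -- the identity produced by the substitution $\lambda_j\mapsto\lambda_j+\eps v(\lambda_j)$, $v\in\coity(\C)$, in the partition function, which couples the one- and two-point functions of $\Pi_{n,n}$ -- by rescaling it at a regular boundary point $\zeta\in\d\setS$ on the scale $n^{-1/2}$ and passing to the limit. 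Together with the bulk limit on the inner side and the exponential decay furnished by Lemma \ref{sgu} and \eqref{beq} on the outer side, the limiting equation identifies the edge profile and yields an expansion
\begin{equation*}
\wt\rho_n(z)=n\,\Delta Q_s(\zeta)\,F_0\lpar c_\zeta\sqrt{n}\,t(z)\rpar+\sqrt{n}\,F_{1/2}\lpar\zeta;c_\zeta\sqrt{n}\,t(z)\rpar+F_1\lpar\zeta;c_\zeta\sqrt{n}\,t(z)\rpar+o(1)
\end{equation*}
for $z$ near $\d\setS$, where $t(z)$ is the signed distance to $\d\wt\setS_n$, $F_0$ is the universal complementary-error-function profile, and $F_{1/2},F_1$ are explicit, involving the curvature of $\d\setS$ and the normal derivatives at $\zeta$ of $\Delta Q$, equivalently of $\log\Delta Q$. (Alternatively one can avoid invoking full boundary universality and read this expansion off the rescaled Ward identity order by order.) A technical point here is that since $f$ is merely $\calC^\infty$ the potentials $Q_s$ are not real-analytic, so the edge expansion is needed in the smooth category and the regularity of $\d\wt\setS_n$ must be obtained from that of $\d\setS$ by perturbation.

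The third ingredient is the deformation of the droplet. By Laplacian-growth (Hele--Shaw) perturbation theory for the obstacle problem defining $\widehat{Q}_\tau$, the droplet $\wt\setS_n$ differs from $\setS$ by a normal layer of width $O(s/n)$ whose signed width at $\zeta\in\d\setS$ is governed, through harmonicity of the equilibrium potential off the droplet, by the harmonic extension $f^{\d\setS}$ of $f|_{\d\setS}$ to $U\cup U_*\cup\{\infty\}$ and its exterior normal derivative. One now inserts the edge expansion, evaluated on the deformed domain, into $\int_\C f\,\wt\rho_n\,\dA-n\int f\,\diff\sigma_1$ and sorts terms by order in $n$. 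The terms of order $n$ combine, since the perturbation has size $1/n$, into a finite limit: the integral of $F_0-\1$ against a constant vanishes by the symmetry of $F_0$, while against the normal variation of $f$ it produces $\frac14\int_{\d\setS}n_U(f)\,\diff s$. The $O(\sqrt{n})$ term $F_{1/2}$, the subleading term $\frac12\Delta\log\Delta Q_s$ integrated over the moving domain $\wt\setS_n$, and the $O(n)$ droplet-motion term reassemble -- via Green's identity and a Wronskian-type boundary identity relating $f$, $f^{\d\setS}$, $\log\Delta Q$ and their normal derivatives on $\d\setS$ -- into $\int_\setS f\,\diff\nu+\frac14\int_{\d\setS}\lpar f\cdot n_{U_*}(\log\Delta Q)-n_{U_*}(f^{\d\setS})\cdot\log\Delta Q\rpar\diff s$. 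Finally the part linear in $s$ assembles, again by Green's identity with the harmonic extension to $U_*$ (the contribution at $\infty$ vanishing because $f^{\d\setS}$ is bounded and harmonic there), into $\frac14\,s\int\babs{\nabla(f^{\setS})}^2\dA=s\,v_f^2$. This yields $F_n'(s)\to e_f+s\,v_f^2$, completing the proof.

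I expect the main obstacle to be the two entangled steps just described: establishing the edge expansion of $\wt\rho_n$ -- boundary universality in the smooth category, with the subleading profiles $F_{1/2}$, $F_1$ identified and uniform in the perturbation parameter -- in tandem with the Hele--Shaw control of $\d\wt\setS_n$; and then the delicate Green's and Wronskian bookkeeping that converts a bulk subleading term, an $O(\sqrt{n})$ edge term and an $O(n)$ droplet-motion term into the precise boundary integrals appearing in $e_f$ and $v_f^2$. The bulk-only case of \textsection\ref{var} isolates none of these difficulties, which is why that argument does not extend verbatim.
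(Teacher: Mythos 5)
The paper does not prove Theorem~\ref{grth}: \textsection\ref{flubou} explicitly states the result without proof and defers the argument to the companion paper \cite{AHM2}, saying only that it is ``based on Ward's identities and Johansson's variational technique.'' So there is no proof in this paper to check your proposal against, and I will instead assess the proposal on its own terms.

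Your overall strategy is aligned with what the authors announce: the variational reduction of the convergence of $\log E_n\e^{t\fluct_n f}$ to the asymptotics of the perturbed one-point density $\wt\rho_n$, via convexity of $F_n$ and $F_n'(s)=\wt E_n(\fluct_n f)$, is exactly the bulk argument of \textsection\ref{var} extended to the full plane; and you correctly identify the two new inputs needed outside the bulk --- edge asymptotics of $\wt\rho_n$ (obtained from the rescaled Ward identity together with the off-diagonal decay of Lemma~\ref{sgu}) and Hele--Shaw control of how $\d\wt\setS_n$ moves under a perturbation $Q\mapsto Q-sf/n$. That the boundary terms in $e_f$ and $v_f^2$ should arise from the interaction of the $O(1/n)$ droplet motion with the $O(n)$ leading density, the $O(\sqrt n)$ edge profile and the $O(1)$ bulk correction $\tfrac12\Delta\log\Delta Q_s$ is also the right picture, and the appearance of the orthogonal decomposition $\calW=\calW_0(U)\oplus\calW(\d\calS)\oplus\calW_0(U_*)$ through the harmonic extension $f^{\d\setS}$ is consistent with the structure of the answer.

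That said, as written this is a strategy, not a proof, and the gaps you flag yourself are precisely where the substance lies. In particular: (i) the existence and uniformity (in $z$, in $s\in[0,1]$, and under the $O(1/n)$ boundary perturbation) of the two-term edge expansion $\wt\rho_n = n\,\Delta Q_s\,F_0 + \sqrt n\,F_{1/2}+F_1+o(1)$ in the merely $\calC^\infty$ category is not established anywhere in the present paper --- Lemma~\ref{klam} is an interior result, and the paper explicitly warns that the corresponding uniform estimate up to the boundary is false, so something genuinely new is needed; (ii) the claim that $\int_{\d\setS}(F_0-\1)\,\diff(\text{scaled normal})=0$ ``by symmetry'' and that the cross term with the normal variation of $f$ gives exactly $\tfrac14\int n_U(f)\,\diff s$ requires the precise normalization of $F_0$, the curvature correction, and the exact Hele--Shaw displacement; none of these are computed; (iii) the ``Wronskian-type boundary identity'' that is supposed to turn the $F_{1/2}$ and droplet-motion terms into $\tfrac14\int_{\d\setS}\lpar f\,n_{U_*}(\log\Delta Q)-n_{U_*}(f^{\d\setS})\log\Delta Q\rpar\diff s$ is asserted, not derived. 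Until those three steps are carried out, the argument does not yet establish the theorem. This is not a criticism of the plan --- it matches the authors' own announced route --- but the proposal should be read as a roadmap toward \cite{AHM2} rather than a self-contained proof.
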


Note that the formula for $e_f$ becomes very simple in the case of the so called
\textit{Hele--Shaw} potentials , i.e. if $\Delta
Q=\text{const.}>0$ in a neighbourhood of $\setS$, then
\begin{equation}e_f=\frac 1 4\int_{\d\setS}n_U(f)\diff
s.\end{equation}
In field theoretical terms, Th. \ref{grth} means that the random
measures
\begin{equation*}4\lpar
\sum_{j=1}^n\delta_{\lambda_j}-n\sigma_1-\nu\rpar\end{equation*}
converge in $\C$ to the sum of two independent Gaussian fields --  indexed by
$\calW_0(U)$ and by $\calW\lpar\d\setS_1\rpar$ respectively.
 While the first one is conformally invariant, the
second one is not.

Alternatively, we can say that the random functions
\begin{equation*}h_n(z)=\log\babs{\frac {p(z;M_1)}
{p(z;M_2)}},\end{equation*} where the $p(z;M_j)$ are the
characteristic polynomials of two independent $n\times n$ random
normal matrices $M_j$, converge to a free Gaussian field on $\setS$
with {\it free} boundary condition.

\subsection{Large volume limit} \label{ginibre}
Let us take a point $z_0\in \setS_1^\circ\cap X$ and assume for simplicity that
$\Delta Q(z_0)=1$. Define   $\mu_n\in$ Prob$(C^n)$ as the image of
$\Pi_{n,n}$ under the map $$(\lambda_j)_{j=1}^n\mapsto \lpar
\sqrt{n}(\lambda_j-z_0)\rpar_{j=1}^n,$$ and think of $\mu_n$ as a point process in $\C$.

\begin{prop}   The processes $\mu_n$ converge to the $\text{Ginibre}(\infty)$ point process, i.e. to the
determinantal process with correlation kernel
$$K(z,w)=\e^{z\bar{w}-(\babs{z}^2+\babs{w}^2)/2}.$$\end{prop}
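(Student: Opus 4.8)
The plan is to exploit the determinantal structure of the ensemble together with the near-diagonal bulk asymptotics of Lemma~\ref{klam}. The measure $\Pi_{n,n}$ on $\C^n$ is determinantal with correlation kernel $\kappa_n(z,w)=K_{n,n}(z,w)\,\e^{-n(Q(z)+Q(w))/2}$, its $k$-point correlation functions being $\det[\kappa_n(z_i,z_j)]_{i,j=1}^k$ (see \eqref{pmnk}). Pushing forward by $T_n(\lambda)=\sqrt n\,(\lambda-z_0)$, and using $\dA(\lambda)=n^{-1}\dA(z)$ under $\lambda=z_0+z/\sqrt n$, one sees that $\mu_n$ is again determinantal, with correlation kernel
\[
\tilde\kappa_n(z,w)=\frac1n\,\kappa_n\!\lpar z_0+\frac z{\sqrt n},\ z_0+\frac w{\sqrt n}\rpar .
\]
So it suffices to prove that, after a gauge transformation that does not affect the process $\mu_n$, $\tilde\kappa_n$ converges to $K(z,w)=\e^{z\bar w-(\babs z^2+\babs w^2)/2}$ uniformly on compact subsets of $\C^2$: the standard fact that locally uniform convergence of the correlation kernels of determinantal processes yields locally uniform convergence of all $k$-point correlation functions (hence convergence of the processes) then finishes the proof.

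To identify the limit, fix a compact $D\subset\C$ and write $a=z_0+z/\sqrt n$, $b=z_0+w/\sqrt n$ for $z,w\in D$; for large $n$ these lie in the disc $D(z_0;\eps)$ of Lemma~\ref{klam}, which gives $\kappa_n(a,b)=K_n^1(a,b)\,\e^{-n(Q(a)+Q(b))/2}+\calO(n^{-1})$, so that
\[
\tilde\kappa_n(z,w)=\lpar b_0(a,\bar b)+\tfrac1n b_1(a,\bar b)\rpar\,\e^{\,n[\psi(a,\bar b)-(Q(a)+Q(b))/2]}+\calO(n^{-2}),
\]
with prefactor tending to $b_0(z_0,\bar z_0)=\Delta Q(z_0)=1$ uniformly on $D$. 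Now Taylor expand $\psi$ about $(z_0,\bar z_0)$ and $Q$ about $z_0$ as in Section~\ref{FACT}, with increments $z/\sqrt n$ and $w/\sqrt n$: the constant terms cancel, the increments carry a factor $n^{-1/2}$, so the outer factor $n$ makes the terms of total order $\le 2$ contribute at order $1$ (or $\sqrt n$) while the Taylor remainder is $\calO(n^{-1/2})$. A direct bookkeeping gives, uniformly for $z,w\in D$,
\[
n\Bigl[\psi(a,\bar b)-\tfrac12 Q(a)-\tfrac12 Q(b)\Bigr]=\bigl(\gamma_n(z)-\gamma_n(w)\bigr)+\Delta Q(z_0)\Bigl(z\bar w-\tfrac12\babs z^2-\tfrac12\babs w^2\Bigr)+\calO\!\lpar n^{-1/2}\rpar ,
\]
where $\gamma_n(z)$ is the sum of the linear term and of the two holomorphic-resp.-antiholomorphic quadratic terms of the rescaled expansion — precisely the part depending on $z$ alone — and one checks $\gamma_n$ is purely imaginary because $Q$ is real-valued; the remaining genuinely mixed quadratic contribution is the displayed Ginibre exponent, since $\Delta Q(z_0)=1$.

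Replacing a correlation kernel $\kappa(z,w)$ by $\frac{c(z)}{c(w)}\kappa(z,w)$ for a nonvanishing $c$ conjugates the $k$-point matrix $[\kappa(z_i,z_j)]$ by $\diag(c(z_1),\dots,c(z_k))$ and hence leaves its determinant, i.e.\ the correlation functions, unchanged; so we may replace $\tilde\kappa_n$ by $\hat\kappa_n(z,w):=\e^{-\gamma_n(z)}\,\tilde\kappa_n(z,w)\,\e^{\gamma_n(w)}$ without altering $\mu_n$. Since $\gamma_n$ is purely imaginary, $\babs{\e^{\pm\gamma_n}}=1$, so the $\calO(n^{-2})$ error is preserved and the previous displays give $\hat\kappa_n(z,w)\to K(z,w)$ uniformly on $D$. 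As $D$ was arbitrary, $\hat\kappa_n\to K$ locally uniformly on $\C^2$, whence $\det[\hat\kappa_n(z_i,z_j)]_{i,j=1}^k\to\det[K(z_i,z_j)]_{i,j=1}^k$ locally uniformly for each $k$; this is exactly the convergence of $\mu_n$ to the $\text{Ginibre}(\infty)$ determinantal point process.

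I expect the only real work to be the Taylor bookkeeping in the displayed identity — separating the cocycle term $\gamma_n(z)-\gamma_n(w)$, which is \emph{unbounded} (of size $\sqrt n$) yet, being of product form $c(z)/c(w)$, irrelevant for the point process, from the universal Gaussian part — together with verifying that all $\calO$-terms are uniform in $(z,w)$ on compacts, which rests on the uniformity statements already recorded in Section~\ref{FACT}. The gauge-invariance remark is precisely what neutralizes the unbounded cocycle and is the one conceptual point of the argument.
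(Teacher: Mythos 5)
Your proposal is correct and follows essentially the same route as the paper's proof: rescale the determinantal kernel, apply Lemma~\ref{klam}, Taylor-expand $\psi$ and $Q$ about $z_0$ to isolate the Ginibre exponent, and discard the remaining factor as a cocycle $c(z)/c(w)$ that conjugates the $k$-point determinants and hence does not affect the point process. The only difference is cosmetic — you phrase the cocycle cancellation as an explicit gauge transformation $\hat\kappa_n=\e^{-\gamma_n(z)}\tilde\kappa_n\e^{\gamma_n(w)}$ and verify $\gamma_n$ is purely imaginary, whereas the paper simply notes the unimodular exponential factors drop out of the determinants, but the content is identical.
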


\begin{proof}
Assume w.l.o.g.  $z_0=0$. Then $\mu_n$ are
determinantal processes with correlation kernels
\begin{equation*}k_n(z,w)=\frac 1 n K_{n,n}\lpar \frac z
{\sqrt{n}},\frac w {\sqrt{n}}\rpar\e^{-n\lpar Q( z/\sqrt{n})+Q(w/
{\sqrt{n}})\rpar/2}.\end{equation*} Using the expansion for
$K_{n,n}$ in Lemma \ref{klam}, we see that
\begin{equation*}k_n(z,w)=\lpar \Delta
Q(0)+o(1)\rpar\e^{n\psi\lpar z/\sqrt{n},\bar{w}/\sqrt{n}\rpar-n\lpar
Q(z/\sqrt{n})+Q(w/\sqrt{n})\rpar/2},
\end{equation*}
where the $o(1)$ is uniform for $z$ and $w$ in a fixed compact
subset of $\C$. Next observe that, up to negligible terms, we have
\begin{equation*}\psi\lpar
z,\bar{w}\rpar=Q(0)+az+\bar{a}\bar{w}+bz^2+\bar{b}\bar{w}^2+z\bar{w},\end{equation*}
for some complex numbers $a$ and $b$. It follows that
\begin{equation*}k_n(z,w)=\lpar 1+o(1)\rpar
\e^{\imag\sqrt{n}\im\lpar a(z-w)\rpar} \e^{\imag\im\lpar
b(z^2-w^2)\rpar}\e^{z\bar{w}-\lpar
\babs{z}^2+\babs{w}^2\rpar/2}.\end{equation*} The first two
exponential factors cancel out when we compute the determinants
representing intensity $k$-point functions, which yields the desired
result.
\end{proof}

\subsection{Berezin transform and fluctuations of eigenvalues}\label{skiit}
 We will write
\begin{equation*}R_n^k(\lambda_1,\ldots,\lambda_k)=\det\lpar
K_n(\lambda_i,\lambda_j)\rpar_{i,j=1}^k\e^{-n\sum_{j=1}^k
Q(\lambda_j)}\end{equation*} for the  $k$-point {\it intensity
function} of the ensemble \eqref{vander} with $m=n$. We will also need the \textit{connected} $2$-point function
\begin{equation*}R_n^{2,c}(z,w)=R_n^2(z,w)-R_n^1(z)R_n^1(w)=-\babs{K_n(z,w)}^2\e^{-n(Q(z)+Q(w))}.
\end{equation*}
 It is
 easy to check that
\begin{equation*}\int_\C
R_n^{2,c}(z,w)\dA(w)=-R_n^1(z),\end{equation*} and
\begin{equation*}\Cov\lpar \fluct_n f,\fluct_n g\rpar=\int_\C
f(z)g(z)R_n^1(z)\dA(z)+\int_{\C^2}f(z)g(w)R_n^{2,c}(z,w)\dA_2(z,w).\end{equation*}
Recall that for a given $z$,  the corresponding {\it Berezin kernel}
$\berd^{\langle z\rangle}_n$ is given
by
\begin{equation*}\berd^{\langle z\rangle}_n(w)=-\frac
{R_n^{2,c}(z,w)} {R_n^1(z)}=R_n^1(w)-\frac {R_n^2(z,w)}
{R_n^1(z)},\end{equation*} and the {\it Berezin transform} is
\begin{equation*}\calB_n f(z)=\int_\C f(w)\berd^{\langle
z\rangle}_n(w)\dA(w).\end{equation*}
We may now conclude that
\begin{equation*}\Cov\lpar\fluct_n f,\fluct_n g\rpar=\int_\C\lpar
f(z)-\calB_n f(z)\rpar g(z) R_n^1(z)\dA(z).\end{equation*} On the
other hand, Th. \ref{mthm} implies that
\begin{equation*}\Cov_n\lpar \fluct_n f,\fluct_n g\rpar\to-\int_\C
\Delta f(z)g(z)\dA(z),\qquad(n\to\infty),\end{equation*}  where
$f,g\in\coity\lpar\calS_1^\circ\cap X\rpar$. Therefore,
\begin{equation*}\int \lpar f(z)-\calB_n f(z)\rpar R_n^1(z)g(z)\dA(z)\to -\int\Delta
f(z)g(z)\dA(z).\end{equation*} Since
$$R_n^1=n\Delta Q+\frac 1 2 \Delta\log\Delta Q(z)+o(1)$$ on the support of
$g$, we  obtain
 the following asymptotic formula for the Berezin transform.

\begin{prop} If $f\in\coity\lpar \setS_1^\circ\cap X\rpar$, then
\begin{equation}\calB_n f= f+\frac {\Delta
f} {n\Delta Q}+o\left(\frac1n\right)\end{equation}
inside the droplet in the
sense of distributions.\end{prop}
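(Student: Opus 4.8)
The plan is to read off the asymptotics of $\calB_n$ from the covariance identity established just above together with Theorem~\ref{mthm}, and then to ``divide out'' the one-point function $R_n^1$ using the expansion \eqref{popp}. Recall that for real-valued $f,g\in\coity(\setS_1^\circ\cap X)$ one has
\begin{equation}\label{bereqcov}\Cov\lpar\fluct_n f,\fluct_n g\rpar=\int_\C\lpar f(z)-\calB_n f(z)\rpar g(z)R_n^1(z)\dA(z).\end{equation}
The cumulant-based proof of Theorem~\ref{mthm} (the case $k=2$ of Theorem~\ref{mth}) shows in particular that $\Var(\fluct_n h)\to v_h^2=\tfrac14\int\babs{\nabla h}^2\dA$; since $\fluct_n$ is linear in the test function, applying this to $h=f$, $h=g$ and $h=f+g$ and expanding $\Var(\fluct_n(f+g))$ gives
\begin{equation*}\Cov\lpar\fluct_n f,\fluct_n g\rpar\longrightarrow\tfrac14\int_\C\nabla f\cdot\nabla g\,\dA=-\int_\C(\Delta f)\,g\,\dA,\end{equation*}
the final equality being an integration by parts combined with $\Delta=\d\dbar=\tfrac14(\d_x^2+\d_y^2)$. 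Feeding this back into \eqref{bereqcov} yields
\begin{equation}\label{bereqlim}\int_\C\lpar\calB_n f-f\rpar g\,R_n^1\,\dA\longrightarrow\int_\C(\Delta f)\,g\,\dA\end{equation}
for all real-valued $f,g\in\coity(\setS_1^\circ\cap X)$.

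The next step is to remove the factor $R_n^1$. By \eqref{popp} with $m=n$, $\tau=1$ we have $R_n^1=n\Delta Q+\tfrac12\Delta\log\Delta Q+o(1)$ uniformly on $\supp g$, and since $\setS_1^\circ\cap X\subset\{\Delta Q>0\}$ the function $1/\Delta Q$ is smooth near $\supp g$; multiplying by a cutoff, $g/\Delta Q$ is again a legitimate test function in $\coity(\setS_1^\circ\cap X)$. Applying \eqref{bereqlim} with $g$ replaced by $g/\Delta Q$, and writing $R_n^1/\Delta Q=n+r_n$ where $\sup_{\supp g}\babs{r_n}$ stays bounded, we obtain
\begin{equation*}n\int_\C\lpar\calB_n f-f\rpar g\,\dA+\int_\C\lpar\calB_n f-f\rpar g\,r_n\,\dA\longrightarrow\int_\C\frac{\Delta f}{\Delta Q}\,g\,\dA.\end{equation*}
If one knows that $\calB_n f\to f$ uniformly on $\supp g$, the middle term tends to $0$, and we conclude $n\int_\C(\calB_n f-f)g\,\dA\to\int_\C\tfrac{\Delta f}{\Delta Q}g\,\dA$, that is,
\begin{equation*}\int_\C\lpar\calB_n f(z)-f(z)-\frac{\Delta f(z)}{n\Delta Q(z)}\rpar g(z)\dA(z)=o(1/n),\end{equation*}
which is exactly the claim; the passage from complex- to real-valued $f$ is the usual splitting into real and imaginary parts.

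The only ingredient not already contained in \eqref{bereqlim} is the a priori bound $\calB_n f\to f$ uniformly on compact subsets of $\setS_1^\circ\cap X$, and this is elementary: from $\int\berd^{\langle z\rangle}_n\dA=1$ one writes $\calB_n f(z)-f(z)=\int_\C(f(w)-f(z))\berd^{\langle z\rangle}_n(w)\dA(w)$; the part over $\babs{w-z}\le M\log n/\sqrt{n}$ is $O(\log n/\sqrt n)$ since $f$ is Lipschitz, while the part over $\babs{w-z}>M\log n/\sqrt n$ is exponentially small by Lemma~\ref{nex} (bound $\berd^{\langle z\rangle}_n(w)=\babs{K_n(z,w)}^2\e^{-n(Q(z)+Q(w))}/R_n^1(z)$ and use $R_n^1\ge cn$ on the relevant compact set); alternatively it is part of the analysis of the Berezin transform in \cite{AH}. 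The one point that genuinely requires care in the argument is thus that the $O(1)$ term in $R_n^1$ must not contaminate the $1/n$-order asymptotics, which is precisely what the (any-rate) convergence $\calB_n f\to f$ prevents; apart from that, all the real content sits in Theorem~\ref{mthm}.

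Finally, I note that the Proposition can also be proved directly, without invoking Theorem~\ref{mthm}: restrict to $\babs{w-z}\lesssim\log n/\sqrt n$ via Lemma~\ref{nex}, insert the near-diagonal asymptotics of Lemma~\ref{klam} so that $\berd^{\langle z\rangle}_n(w)$ is, to leading order, $n\Delta Q(z)\e^{-n\Delta Q(z)\babs{w-z}^2}$, Taylor-expand $f$ to second order at $z$, and evaluate the resulting Gaussian integrals: the holomorphic and antiholomorphic quadratic terms vanish by rotational symmetry, while $\int\babs{w-z}^2\,n\Delta Q(z)\e^{-n\Delta Q(z)\babs{w-z}^2}\dA(w)=1/(n\Delta Q(z))$. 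This gives $\calB_n f(z)=f(z)+\Delta f(z)/(n\Delta Q(z))+o(1/n)$ uniformly on compacts, which is stronger than the distributional statement above.
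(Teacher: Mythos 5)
Your proof follows the paper's own route through the covariance identity
\begin{equation*}
\Cov\lpar\fluct_n f,\fluct_n g\rpar=\int_\C\lpar f-\calB_n f\rpar g\,R_n^1\,\dA
\end{equation*}
and the polarization of Theorem~\ref{mthm}, and it is correct. Where you genuinely add value is in the step of dividing out $R_n^1$: the paper simply cites $R_n^1=n\Delta Q+\tfrac12\Delta\log\Delta Q+o(1)$ and asserts the conclusion, tacitly absorbing the $O(1)$ part of $R_n^1$. You correctly notice that the contribution $\int(\calB_n f-f)\,g\,r_n\,\dA$ must be shown to be $o(1)$ before the factor $n$ can be isolated, and that this requires knowing $\calB_n f\to f$ on $\supp g$; your elementary argument for this (split at $\babs{w-z}\sim\log n/\sqrt n$, use $\int\berd^{\langle z\rangle}_n\dA=1$, Lipschitz continuity of $f$, and the off-diagonal damping of Lemma~\ref{nex} together with the lower bound $R_n^1\gtrsim n$ on the compact set) is sound. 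This is a real gap in the paper's terse presentation, although one could also close it by a bootstrap: the same covariance limit, applied to arbitrary test functions, already forces $\int(\calB_n f-f)\phi\,\dA=O(1/n)$ for each fixed $\phi$, which combined with $\|\calB_n f-f\|_{L^1(\supp g)}=O(1)$ and the uniform convergence of $c_n=R_n^1-n\Delta Q$ gives the same conclusion; your uniform bound is the cleaner route. Your closing remark that the result can be obtained directly from Lemma~\ref{klam} and Lemma~\ref{nex} by Taylor expansion and Gaussian integration is also correct and in fact yields the stronger statement $\calB_n f=f+\Delta f/(n\Delta Q)+o(1/n)$ uniformly on compacts, essentially the line of argument behind Theorem~2.6 of \cite{AH}; it is worth noting that this bypasses Theorem~\ref{mthm} entirely and so does not illustrate the intended point of the subsection, namely that the fluctuation CLT encodes the quasi-classical expansion of the Berezin transform.
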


Berezin's transform has the following {\it probabilistic interpretation}. Let us think of the measure $\Pi_n=\Pi_{n,n}$
as the law of a point process  $\Phi_n$ in $\C$. We will refer to $\Phi_n$
as the $n$-point {\it RNM} (random normal matrix)
{\it process}   associated with
potential $Q$.

Let us now condition   $\Phi_n$ on the event  $\{z_0\in\Phi_n\}$ and  write $\wt{\Phi}^{\langle z_0\rangle}_{n-1}$ for conditional
$(n-1)$-point process.
 Accordingly, we write $R_n^k$ for the $k$-point intensity
function of $\Phi_n$ and $\wt{R}_{n-1}^k=\wt{R}_{n-1}^{k,\langle
z_0\rangle}$ the $k$-point function of $\wt{\Phi}_{n-1}^{\langle
z_0\rangle}$.

\begin{lem}
\begin{equation}\label{calm}\berd^{\langle
z_0\rangle}_n(z)=R_n^1(z)-\wt{R}_{n-1}^1(z).\end{equation} \end{lem}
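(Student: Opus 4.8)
The plan is to recognize the conditioned configuration $\wt\Phi_{n-1}^{\langle z_0\rangle}$ as the reduced Palm process of $\Phi_n$ at $z_0$, and then to compute its one--point intensity function directly from the density of $\Pi_{n,n}$.

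First I would record the relation between densities and intensities. By \eqref{me0} with $m=n$, the process $\Phi_n$ is the configuration of $n$ points in $\C$ with symmetric density $p_n=\frac1{n!}R_n^n$ with respect to $\dA_n$; comparing \eqref{marginal} and \eqref{pmnk} then gives, for $1\le k\le n$,
\begin{equation*}R_n^k(\lambda_1,\ldots,\lambda_k)=\frac{n!}{(n-k)!}\int_{\C^{n-k}}p_n(\lambda_1,\ldots,\lambda_n)\,\dA_{n-k}(\lambda_{k+1},\ldots,\lambda_n),\end{equation*}
so in particular $R_n^1(z)=n\int_{\C^{n-1}}p_n(z,\mu_1,\ldots,\mu_{n-1})\,\dA_{n-1}$ and $R_n^2(z_0,z)=n(n-1)\int_{\C^{n-2}}p_n(z_0,z,\mu_2,\ldots,\mu_{n-1})\,\dA_{n-2}$.

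Next I would make precise the conditioning. Since $\{z_0\in\Phi_n\}$ is a null event, $\wt\Phi_{n-1}^{\langle z_0\rangle}$ must be understood via the Campbell--Mecke disintegration
\begin{equation*}\mathbb E\Bigl[\sum_{x\in\Phi_n}F\bigl(x,\Phi_n\setminus\{x\}\bigr)\Bigr]=\int_\C\mathbb E\bigl[F\bigl(z,\wt\Phi_{n-1}^{\langle z\rangle}\bigr)\bigr]\,R_n^1(z)\,\dA(z).\end{equation*}
Evaluating the left side by singling out the atom $x$, which by the symmetry of $p_n$ contributes a factor $n$, identifies the law of $\wt\Phi_{n-1}^{\langle z_0\rangle}$ as the probability measure on $(n-1)$--point configurations with symmetric density $\wt p_{n-1}^{\langle z_0\rangle}(\mu_1,\ldots,\mu_{n-1})=n\,p_n(z_0,\mu_1,\ldots,\mu_{n-1})/R_n^1(z_0)$; the first identity above shows this has total mass $1$. (Alternatively one can take this formula as the definition of conditioning $\Phi_n$ on $\{z_0\in\Phi_n\}$ and check afterwards that it is the $\eps\to0$ limit of $\Phi_n$ conditioned on $\Phi_n\cap D(z_0;\eps)\ne\emptyset$; either route is routine.)

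Finally I would compute the one--point intensity of $\wt\Phi_{n-1}^{\langle z_0\rangle}$. Applying the density--intensity relation to this $(n-1)$--point process and then inserting the formula for $\wt p_{n-1}^{\langle z_0\rangle}$ gives
\begin{equation*}\wt R_{n-1}^1(z)=(n-1)\int_{\C^{n-2}}\wt p_{n-1}^{\langle z_0\rangle}(z,\mu_2,\ldots,\mu_{n-1})\,\dA_{n-2}=\frac{n(n-1)}{R_n^1(z_0)}\int_{\C^{n-2}}p_n(z_0,z,\mu_2,\ldots,\mu_{n-1})\,\dA_{n-2}=\frac{R_n^2(z_0,z)}{R_n^1(z_0)},\end{equation*}
by the second identity above. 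Substituting into the right side of \eqref{calm} and recalling that $\berd^{\langle z_0\rangle}_n(z)=R_n^1(z)-R_n^2(z_0,z)/R_n^1(z_0)$ yields the claim. The one real subtlety is the bookkeeping in the middle step: making sure the conditioning on the null event $\{z_0\in\Phi_n\}$ is unambiguous, and keeping the combinatorial prefactors ($n$ versus $n-1$, and the factorials linking $p_n$, the marginals $\Pi_{n,n}^k$, and $R_n^k$) consistent throughout. Everything else is a direct substitution.
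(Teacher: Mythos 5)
Your proof is correct, and it reaches the same pivotal identity as the paper — namely $\wt R_{n-1}^1(z)=R_n^2(z_0,z)/R_n^1(z_0)$ — but by a genuinely different route. The paper argues via the heuristic probabilistic interpretation of the intensity functions as limits of small--disc probabilities: it writes $R_n^1(z_0)$ and $R_n^2(z_0,z)$ as $\eps\to 0$ limits of $\Pi_n$--probabilities of two discs being occupied, and then reads off $\wt R_{n-1}^1$ directly as a limit of conditional probabilities. Your approach instead works at the level of densities: you identify the conditioned configuration as the reduced Palm process, pin down its exact $(n-1)$--point density $\wt p_{n-1}^{\langle z_0\rangle}=n\,p_n(z_0,\cdot)/R_n^1(z_0)$ via Campbell--Mecke (or simply as the limit of disc--conditionings), and then compute $\wt R_{n-1}^1$ by integrating out variables against the intensity--density dictionary obtained from \eqref{marginal} and \eqref{pmnk}. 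What each approach buys: the paper's small--disc computation is short and intuitively transparent, but the limits it manipulates are not fully justified; your version is more careful about what ``conditioning on the null event $\{z_0\in\Phi_n\}$'' means and supplies a watertight bookkeeping of the combinatorial prefactors. It is worth noting that your explicit formula for $\wt p_{n-1}^{\langle z_0\rangle}$ is precisely the content of the paper's Lemma \ref{yxa} (stated there in terms of a shifted potential), so your argument in effect derives \eqref{calm} from what the paper treats as a separate, subsequent lemma; logically this is fine since the proof of Lemma \ref{yxa} does not rely on \eqref{calm}.
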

\begin{proof} Consider small discs $D$ and $D_0$ centered at $z$ and $z_0$ with radii $\eps$ and $\eps_0$ respectively. We have
\begin{equation*}R_n^1(z_0)=\lim_{\eps_0\to 0}\frac {\Pi_n\lpar\big\{
\Phi_n\cap D_0\ne \emptyset\big\}\rpar}
{\eps_0^2},\end{equation*} and
\begin{equation*}R_n^2(z_0,z)=\lim_{\eps,\eps_0\to 0}\frac
{\Pi_n\lpar \big\{\Phi_n\cap D\ne\emptyset\big\}\cap\big\{
\Phi_n\cap D_0\ne \emptyset\big\}\rpar}
{\eps^2\eps_0^2}.\end{equation*} It follows  that
\begin{equation*}\begin{split}\wt{R}_{n-1}^1(z)&=\lim_{\eps\to
0}\lim_{\eps_0\to 0}\frac {\Pi_n\lpar\big\{\Phi_n\cap
D\ne\emptyset\, \big|\, \Phi_n\cap D_0\ne\emptyset\big\}\rpar}
{\eps^2}=\\
&=\lim_{\eps\to 0}\lim_{\eps_0\to 0}\frac {\Pi_n\lpar
\big\{\Phi_n\cap D\ne\emptyset\big\}\cap\big\{ \Phi_n\cap D_0\ne
\emptyset\big\}\rpar} {\eps^2\Pi_n\lpar\big\{ \Phi_n\cap D_0\ne
\emptyset\big\}\rpar}=\\
&=\frac {R_n^2(z_0,z)} {R_n^1(z_0)}=R_n^1(z)-\berd^{\langle
z_0\rangle}_n(z).\\
\end{split}\end{equation*}
\end{proof}

Integrating \eqref{calm}
 against test functions we get
the following formula, where $E_n$ stands for the
expectation with respect to $\Pi_n$ and $\wt{E}_{n-1}^{\langle
z_0\rangle}$  with respect to the law of
$\wt{\Phi}_{n-1}^{\langle z_0\rangle}$.

\begin{cor} \label{BEF}Let $z_0\in\C$ and $f\in \calC_b(\C)$. Then
\begin{equation*}\calB_n f(z_0)=E_n\lpar \trace_n
f\rpar-\wt{E}_{n-1}^{\langle z_0\rangle}\lpar\trace_{n-1} f\rpar.
\end{equation*}
\end{cor}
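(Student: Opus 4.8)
The plan is to integrate the pointwise identity \eqref{calm} from the preceding lemma against the test function $f$; the whole content of the corollary is that the two terms on the right of \eqref{calm} integrate to the two expectations in the statement.

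First I would recall the defining property of a first intensity (one-point) function: for a point process $\Phi$ in $\C$ with first intensity $R^1$ and for bounded measurable $g$, one has $E\lpar \sum_{\lambda\in\Phi}g(\lambda)\rpar=\int_\C g(z)\,R^1(z)\,\dA(z)$. For the $n$-point RNM process $\Phi_n$ this is precisely the case $k=1$ of \eqref{pmnk} together with \eqref{marginal}, already used in \eqref{E1.8}; it gives $E_n\lpar \trace_n f\rpar=\int_\C f(z)\,R_n^1(z)\,\dA(z)$. For the conditioned $(n-1)$-point process $\wt\Phi_{n-1}^{\langle z_0\rangle}$ the same formula holds with $R_n^1$ replaced by $\wt R_{n-1}^1=\wt R_{n-1}^{1,\langle z_0\rangle}$: indeed $\wt R_{n-1}^1$ was introduced in the proof of the lemma exactly as the limit of normalized probabilities of finding a point of $\wt\Phi_{n-1}^{\langle z_0\rangle}$ in a shrinking disc, which is the defining property of its first intensity, so $\wt E_{n-1}^{\langle z_0\rangle}\lpar\trace_{n-1}f\rpar=\int_\C f(z)\,\wt R_{n-1}^1(z)\,\dA(z)$. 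Both integrals converge absolutely, since $R_n^1\ge 0$ with $\int_\C R_n^1\,\dA=n$ and $\wt R_{n-1}^1\ge 0$ with $\int_\C \wt R_{n-1}^1\,\dA=n-1$, while $f$ is bounded.

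Then I would multiply \eqref{calm} by $f(z)$ and integrate in $\dA(z)$ over $\C$. By the definition of the Berezin transform the left side becomes $\int_\C f(z)\,\berd^{\langle z_0\rangle}_n(z)\,\dA(z)=\calB_n f(z_0)$, and the right side splits --- legitimately, because $\berd^{\langle z_0\rangle}_n=R_n^1-\wt R_{n-1}^1$ is a difference of two absolutely integrable functions --- into $\int_\C f\,R_n^1\,\dA-\int_\C f\,\wt R_{n-1}^1\,\dA=E_n\lpar \trace_n f\rpar-\wt E_{n-1}^{\langle z_0\rangle}\lpar \trace_{n-1}f\rpar$, which is the asserted identity. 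There is no real obstacle here; the only step that deserves a line of care is the identification of $\wt E_{n-1}^{\langle z_0\rangle}\lpar\trace_{n-1}f\rpar$ with $\int_\C f\,\wt R_{n-1}^1\,\dA$ for the conditioned process, and that is immediate from the construction of $\wt R_{n-1}^1$ carried out in the proof of the lemma.
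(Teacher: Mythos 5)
Your proposal is correct and follows the paper's argument exactly: the paper derives the corollary in a single line by integrating \eqref{calm} against the test function $f$ and identifying $\int f\,R_n^1\,\dA$ and $\int f\,\wt R_{n-1}^1\,\dA$ with the two expectations, which is precisely your plan. You have merely filled in the routine justifications (the defining property of first-intensity functions and absolute integrability) that the paper leaves implicit.
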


The {\it central limit theorem} for  Berezin transform
 states that the rescaled (as in the large volume limit procedure)  Berezin's measures
converge to the standard
Gaussian distribution in $\C$, see \cite{AH},Th. 2.6.
We can now interpret this statement in terms of random eigenvalues.

Let $z_0\in\setS_1^\circ\cap X$ and  assume w.l.o.g. that $\Delta
Q(z_0)=1$. Define $\wh{\Phi}_{n-1}^{\langle z_0\rangle}$ as a point process in $\C$ obtained from
$\wt{\Phi}^{\langle z_0\rangle}_{n-1}$  by dilating all distances to $z_0$ by a factor of
$\sqrt{n}$ as in the previous subsection. In other words, we condition $\Phi_n$ on the event "$z_0$ is an eigenvalue'' and rescale the distances.

\begin{prop} The limiting point process
of  $\wh{\Phi}_n^{\langle z_0\rangle}$, ($n\to\infty$),   has the following  one-point intensity function:
$$\wh{R}^{1,\langle z_0\rangle}(z)=1-\e^{-\babs{z-z_0}^2}.$$
\end{prop}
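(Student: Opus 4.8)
The plan is to derive the claim from three facts already established: the conditioning identity \eqref{calm}, the near-diagonal bulk asymptotics \eqref{popp}, and the large-volume (Ginibre) limit of \textsection\ref{ginibre}. After translating the potential we may assume $z_0=0$, and by hypothesis $\Delta Q(0)=1$; the general statement then follows on replacing $z$ by $z-z_0$. Throughout write $\kappa_n(z,w)=K_n(z,w)\e^{-n(Q(z)+Q(w))/2}$ for the correlation kernel of $\Phi_n$, so that $R_n^1(z)=\kappa_n(z,z)$ and, by the definition of the Berezin kernel together with $R_n^{2,c}(z,w)=-|\kappa_n(z,w)|^2$, also $\berd_n^{\langle0\rangle}(z)=|\kappa_n(0,z)|^2/R_n^1(0)$.

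First I would use \eqref{calm} to write the one-point function of the conditioned process $\wt\Phi_{n-1}^{\langle0\rangle}$ as $\wt R_{n-1}^1(z)=R_n^1(z)-\berd_n^{\langle0\rangle}(z)$. Since $\wh\Phi_{n-1}^{\langle0\rangle}$ is obtained from $\wt\Phi_{n-1}^{\langle0\rangle}$ by the dilation $\lambda\mapsto\sqrt n\,\lambda$, its one-point function equals $\tfrac1n\wt R_{n-1}^1(z/\sqrt n)=\tfrac1nR_n^1(z/\sqrt n)-\tfrac1n\berd_n^{\langle0\rangle}(z/\sqrt n)$, and the task reduces to computing the limit of each of the two terms, locally uniformly in $z$.

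For the first term, \eqref{popp} with $m=n$ and $\tau=1$ gives $R_n^1(\zeta)=n\Delta Q(\zeta)+\tfrac12\Delta\log\Delta Q(\zeta)+O(n^{-1})$ uniformly for $\zeta$ in a fixed neighbourhood of $0$, whence $\tfrac1nR_n^1(z/\sqrt n)\to\Delta Q(0)=1$ locally uniformly, and in particular $R_n^1(0)=n(1+o(1))$. For the second term I would invoke the rescaled kernel $k_n(z,w)=\tfrac1n\kappa_n(z/\sqrt n,w/\sqrt n)$ of \textsection\ref{ginibre}, whose proof yields $|k_n(z,w)|=(1+o(1))\e^{-|z-w|^2/2}$ locally uniformly; hence $|\kappa_n(0,z/\sqrt n)|^2=n^2|k_n(0,z)|^2=n^2(1+o(1))\e^{-|z|^2}$, so that $\berd_n^{\langle0\rangle}(z/\sqrt n)=|\kappa_n(0,z/\sqrt n)|^2/R_n^1(0)=n(1+o(1))\e^{-|z|^2}$ and $\tfrac1n\berd_n^{\langle0\rangle}(z/\sqrt n)\to\e^{-|z|^2}$. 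Combining the two limits gives that the one-point function of $\wh\Phi_{n-1}^{\langle0\rangle}$ converges to $1-\e^{-|z|^2}$, i.e.\ $\wh R^{1,\langle z_0\rangle}(z)=1-\e^{-|z-z_0|^2}$ after undoing the translation.

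There is no essential difficulty in this argument; the only thing to watch is the bookkeeping of the powers of $n$ produced by the $\sqrt n$-dilation, together with the local uniformity (in $z$) of the two input asymptotics, which is precisely what \eqref{popp} and the \textsection\ref{ginibre} estimate provide. As a remark, running the same computation on the full conditioned kernel $\kappa_n^{(0)}(z,w)=\kappa_n(z,w)-\kappa_n(z,0)\kappa_n(0,w)/\kappa_n(0,0)$ shows that $\tfrac1n\kappa_n^{(0)}(z/\sqrt n,w/\sqrt n)$ converges, up to a unimodular gauge that leaves the correlation functions unchanged, to $K(z,w)-K(z,0)K(0,w)=\e^{-(|z|^2+|w|^2)/2}\lpar\e^{z\bar w}-1\rpar$; thus $\wh\Phi_{n-1}^{\langle z_0\rangle}$ in fact converges to the determinantal point process with this kernel, whose diagonal is $1-\e^{-|z-z_0|^2}$.
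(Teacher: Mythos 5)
Your proof is correct and is essentially the paper's argument, organized slightly more explicitly. The paper rescales the identity \eqref{calm} to read $\wh\berd_n^{\langle z_0\rangle}(z)=\wh R_n^1(z)-\wh R_{n-1}^{1,\langle z_0\rangle}(z)$, and then lets $n\to\infty$, quoting the Ginibre$(\infty)$ limit for the two facts $\wh R_n^1\to 1$ and $\wh\berd_n^{\langle z_0\rangle}\to\e^{-\babs{z-z_0}^2}$; your write-up derives exactly these two limits by hand from \eqref{popp} and the kernel asymptotics of \textsection\ref{ginibre} and then subtracts, which is the same computation with the scaling bookkeeping made visible.
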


\begin{proof}
Let $\wh{R}_{n-1}^{1,\langle z_0\rangle}$ denote the
one-point function of $\wh{\Phi}_{n-1}^{\langle z_0\rangle}$.
Similarly, let $\wh{R}_n^1$ be the one-point
function for the process $\wh{\Phi}_n$, by which we mean $\Phi_n$
dilated by a factor of $\sqrt{n}$ about $z_0$. By Proposition~8.3, the point processes
$\wh{\Phi}_n$ converge to   Ginibre$(\infty)$ ensemble as $n\to\infty$. The
one-point function of  Ginibre$(\infty)$ is $\wh{R}^1(z)\equiv1$ and its Berezin kernel is
$\wh{\berd}^{\langle z_0\rangle}(z)=\e^{-\babs{z-z_0}^2}$. Conditioning
the equation \eqref{calm} on the event "$z_0$ is an eigenvalue'', we get
\begin{equation*}\wh{\berd}_n^{\langle
z_0\rangle}(z)=\wh{R}_n^1(z)-\wh{R}_{n-1}^{1,\langle
z_0\rangle}(z),\end{equation*} and sending $n\to\infty$ we get the
stated formula.
\end{proof}

\subsection{Berezin transform in quasi-classical limit and orthogonal polynomials}
 As before, let  $\Phi_n$ be the $n$-point RNM process associated with
potential $Q$. We fix a point $z_0$ and condition $\Phi_n$ on the event $\{z_0\in\Phi_n\}$.

\begin{lem} \label{yxa} The conditional $(n-1)$-point process $\wt{\Phi}_{n-1}^{\langle z_0\rangle}$ is the RNM process associated with the potential
\begin{equation*}\wt{Q}(z)=Q(z)-\frac1{n-1}\lpar\log\babs{z-z_0}^2-Q(z)\rpar.\end{equation*}
\end{lem}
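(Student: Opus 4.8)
The plan is to identify the joint density of the conditioned process $\wt\Phi_{n-1}^{\langle z_0\rangle}$ directly, by means of the standard factorization of the Vandermonde determinant, and then to recognize it as the density of states \eqref{vander} for a suitable potential.

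I would begin by recalling that the law of $\Phi_n$ is the measure $\Pi_n=\Pi_{n,n}$, whose symmetric density with respect to $\dA_n$ on $\C^n$ is proportional to $\babs{V_n(\lambda_1,\ldots,\lambda_n)}^2\,\e^{-n\sum_{j=1}^n Q(\lambda_j)}$; see \eqref{vander}. Conditioning on the null event $\{z_0\in\Phi_n\}$ is to be read in the sense of reduced Palm measures: the law of $\wt\Phi_{n-1}^{\langle z_0\rangle}$ is the probability measure on $\C^{n-1}$ whose density with respect to $\dA_{n-1}$ is proportional, as a function of $(\lambda_1,\ldots,\lambda_{n-1})$, to the density of $\Pi_n$ evaluated at the point $(z_0,\lambda_1,\ldots,\lambda_{n-1})$. (Equivalently, at the level of intensity functions, $\wt R_{n-1}^{k,\langle z_0\rangle}(\lambda_1,\ldots,\lambda_k)=R_n^{k+1}(z_0,\lambda_1,\ldots,\lambda_k)/R_n^1(z_0)$, the convention already used in deriving \eqref{calm}.)

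The key step is the computation of that density. Using $V_n(\lambda_1,\ldots,\lambda_{n-1},z_0)=V_{n-1}(\lambda_1,\ldots,\lambda_{n-1})\prod_{j=1}^{n-1}(\lambda_j-z_0)$ and writing $\prod_{j=1}^{n-1}\babs{\lambda_j-z_0}^2=\e^{\sum_{j=1}^{n-1}\log\babs{\lambda_j-z_0}^2}$, one finds that the density of $\Pi_n$ at $(z_0,\lambda_1,\ldots,\lambda_{n-1})$ equals, up to a multiplicative constant independent of $(\lambda_1,\ldots,\lambda_{n-1})$,
\begin{equation*}\babs{V_{n-1}(\lambda_1,\ldots,\lambda_{n-1})}^2\,\e^{-\sum_{j=1}^{n-1}\lpar nQ(\lambda_j)-\log\babs{\lambda_j-z_0}^2\rpar}.\end{equation*}
This has exactly the form of the density of states \eqref{vander} for $n-1$ eigenvalues at level $m=n-1$ with potential $\wt Q$, provided $(n-1)\wt Q(\lambda)=nQ(\lambda)-\log\babs{\lambda-z_0}^2$; solving for $\wt Q$ produces precisely the function in the statement.

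Finally I would check that $\wt Q$ satisfies the standing hypotheses of Section~\ref{main1}, so that the associated ensemble is well defined: $\wt Q$ inherits the regularity of $Q$ on $\C\setminus\{z_0\}$ and equals $+\infty$ at $z_0$, while near infinity $\wt Q(z)\ge\frac{n\rho-1}{n-1}\log\babs{z}^2+\calO(1)$, and $\frac{n\rho-1}{n-1}>1$ since $\rho>\tau=1$, so the partition function converges. As two finite point processes with proportional joint densities are equal, $\wt\Phi_{n-1}^{\langle z_0\rangle}$ coincides with the RNM process attached to $\wt Q$, which is the assertion. The only genuinely delicate point is the interpretation of conditioning on the measure-zero event $\{z_0\in\Phi_n\}$; once the reduced Palm description is fixed, everything else is the Vandermonde identity together with elementary bookkeeping.
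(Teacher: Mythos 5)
Your proposal is correct and follows essentially the same route as the paper: specialize the joint density of $\Pi_n$ at $(z_0,\lambda_1,\ldots,\lambda_{n-1})$, factor the Vandermonde, and read off the $(n-1)$-point RNM density. The only additions you make — spelling out the reduced-Palm interpretation of the conditioning and verifying that $\wt Q$ still satisfies the growth hypothesis (so the perturbed partition function converges) — are welcome clarifications that the paper leaves implicit, but they do not change the argument.
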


\begin{proof} The density of the measure $\Pi_n$ is given by
\begin{equation}\label{ukno}\rho(\lambda_1,\ldots,\lambda_n)=\frac 1 {Z}\babs{V_n(\lambda_1,\ldots,\lambda_n)}^2\e^{-n\lpar Q(\lambda_1)+\ldots+Q(\lambda_n)\rpar},\end{equation}
where $Z$ is the normalizing factor (partition function) and $V_n$ the Vandermonde determinant, see \eqref{zmno}.
Setting $z_0=\lambda_n$, we have
\begin{equation}\begin{split}\rho(\lambda_1,\ldots,\lambda_{n-1},z_0)&=\frac {e^{-nQ(z_0)}} {Z}\babs{V_{n-1}(\lambda_1,\ldots,\lambda_{n-1})}^2\e^{-n\lpar Q(\lambda_1)+\ldots+Q(\lambda_{n-1})\rpar
+\sum_{j=1}^{n-1}\log\babs{\lambda_j-z_0}^2}\\
&=\frac {e^{-nQ(z_0)}} { Z}\babs{V_{n-1}(\lambda_1,\ldots,\lambda_{n-1})}^2\e^{-(n-1)\lpar \wt{Q}_n(\lambda_1)+\ldots+\wt{Q}(\lambda_{n-1})\rpar}.\\
\end{split}
\end{equation}
It follows that the density of the conditional point process  $\wt{\Phi}_{n-1}^{\langle z_0\rangle}$ is
$$\tilde \rho(\lambda_1,\ldots,\lambda_{n-1})=\frac 1 {\wt{Z}}\babs{V_{n-1}(\lambda_1,\ldots,\lambda_{n-1})}^2\e^{-(n-1)\lpar \wt{Q}_n(\lambda_1)+\ldots+\wt{Q}_n(\lambda_{n-1})\rpar},$$
where $\tilde Z$ is the corresponding normalizing factor.
\end{proof}

\smallskip
 Let us now assume that the potential $Q$ is real analytic and   strictly subharmonic
in some neighbourhood of the droplet $\setS=\setS_1$ so that Theorem \ref{grth} applies. Denote
\begin{equation*}\wt{Q}_n(z)=Q(z)-\frac{h(z)}n,\qquad h(z):=\log\babs{z-z_0}^2-Q(z).\end{equation*}
i.e. so that $\wt{Q}_n=Q-h/n$. As in Subsection~\ref{var}, for a bounded smooth function $f$ we write
$$D_n[f]=E_n\lpar \fluct_n f\rpar, \qquad
D_n^h[f]=\wt{E}_n\lpar\fluct_n f\rpar,$$
 where $\wt{E}_n$ is the expectation with respect to the potential $\wt{Q}_n$.

The argument  in Section~8.2 shows  that the variance part of Theorem \ref{grth} is equivalent to the statement that
\begin{equation*}D_n[f]-D_n^h[f]\to \frac 1 4\langle f^{\setS},h\rangle_{\nabla},\end{equation*}
where $f^{\setS}$ is the orthogonal projection of $f$ onto $\calW_0(U)\oplus \calW(\d\setS)$. By Corollary~\ref{BEF} and Lemma~\ref{yxa}, we
have
\begin{equation*}\begin{split}\calB_n f(z_0)&=E_n\lpar \trace_n f\rpar-\wt{E}_{n-1}\lpar\trace_{n-1} f\rpar\\&=\int f\diff\sigma+
E_n\lpar \fluct_n f\rpar-E_{n-1}\lpar\fluct_{n-1} f\rpar\\
&=\int f\diff\sigma+D_n[f]-D_{n-1}^h[f],
\end{split}\end{equation*}
and therefore
\begin{equation}\label{scout}\calB_n f(z_0)\to\int f^{\setS}\diff\sigma+\langle f^{\setS},h\rangle_\nabla,\qquad (n\to\infty).\end{equation}
Note that
\begin{equation*}\langle f^{\setS},h\rangle_\nabla=\langle f^{\setS},Q^{\setS}\rangle_\nabla-\langle f^{\setS},l\rangle_\nabla,\end{equation*}
where $l(z)=\log\babs{z-z_0}^2$ and
\begin{equation*}\langle f^{\setS},Q^{\setS}_n\rangle_\nabla=-\int f^{\setS}\Delta Q^{\setS}\dA=-\int f\diff\sigma,\end{equation*}
and
\begin{equation*}-\langle f^{\setS},l\rangle_\nabla=\int f^{\setS}\Delta l\dA=f^{\setS}(z_0).\end{equation*}
In view of \eqref{scout}, it follows that
\begin{equation*}\calB_n f(z_0)\to f^{\setS}(z_0).\end{equation*}
Since the function $f$ was arbitrary, we have derived  the following result.

\begin{thm} \label{yto} Let $z_0\in\C$. Then
the Berezin measures $B_n^{\langle z_0\rangle}\dA$ converge to the Dirac measure at $z_0$ if $z_0\in \setS_1$, and to the
harmonic measure of  $\C\setminus \setS_1$ evaluated at $z_0$ if $z_0\not\in\setS_1$.
\end{thm}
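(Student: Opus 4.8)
The plan is to read the statement off directly from the limit relation $\calB_n f(z_0)\to f^{\setS}(z_0)$ established immediately above, valid for every $f\in\coity(\C)$ (the complex-valued case reducing to the real-valued one by linearity of $\calB_n$), once both sides are reinterpreted in terms of measures. Recall that $\calB_n f(z_0)=\int_\C f(w)\,\berd_n^{\langle z_0\rangle}(w)\,\dA(w)$, so the left-hand side is the integral of $f$ against the Berezin measure $B_n^{\langle z_0\rangle}\dA:=\berd_n^{\langle z_0\rangle}\,\dA$.

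First I would observe that $B_n^{\langle z_0\rangle}\dA$ is a probability measure. Nonnegativity of its density follows from $\berd_n^{\langle z_0\rangle}(w)=-R_n^{2,c}(z_0,w)/R_n^1(z_0)=\babs{K_n(z_0,w)}^2\e^{-n(Q(z_0)+Q(w))}/R_n^1(z_0)\ge 0$, and total mass one follows from the identity $\int_\C R_n^{2,c}(z_0,w)\,\dA(w)=-R_n^1(z_0)$ recorded in \textsection\ref{skiit}. Hence, once convergence of $\int_\C f\,\diff(B_n^{\langle z_0\rangle}\dA)$ is known for all $f\in\coity(\C)$ and the candidate limit is itself a probability measure, no mass escapes to infinity and the vague convergence upgrades to weak-$*$ convergence of probability measures on $\C$.

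Next I would identify the number $f^{\setS}(z_0)$ by means of the description of the projection $f\mapsto f^{\setS}$ from \textsection\ref{flubou}: $f^{\setS}$ coincides with $f$ on $\setS_1$, and on $\C\setminus\setS_1$ it is the bounded harmonic extension of $f\big|_{\d\setS_1}$. If $z_0\in\setS_1$ this gives $f^{\setS}(z_0)=f(z_0)=\int_\C f\,\diff\delta_{z_0}$. If $z_0\notin\setS_1$, then since $\d\setS_1$ is a finite union of real-analytic arcs (by the standing real-analyticity hypothesis, cf.\ \textsection\ref{flubou}) the domain $\C\setminus\setS_1$ is regular for the Dirichlet problem, the bounded harmonic function with boundary data $f\big|_{\d\setS_1}$ is unique, and it is represented by harmonic measure: $f^{\setS}(z_0)=\int_{\d\setS_1}f\,\diff\omega^{z_0}$, where $\omega^{z_0}$ denotes the harmonic measure of $\C\setminus\setS_1$ at $z_0$. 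Substituting into $\calB_n f(z_0)\to f^{\setS}(z_0)$ gives $\int_\C f\,\diff(B_n^{\langle z_0\rangle}\dA)\to\int_\C f\,\diff\mu_{z_0}$ for all $f\in\coity(\C)$, with $\mu_{z_0}=\delta_{z_0}$ when $z_0\in\setS_1$ and $\mu_{z_0}=\omega^{z_0}$ when $z_0\notin\setS_1$; combined with the previous paragraph, this is exactly the claimed convergence of measures.

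The substantive input is therefore the relation $\calB_n f(z_0)\to f^{\setS}(z_0)$ itself, and this is where the main obstacle lies: it was obtained by applying Theorem \ref{grth} to the perturbed potential $\wt{Q}_n=Q-h/n$ with $h(z)=\log\babs{z-z_0}^2-Q(z)$, which carries a logarithmic singularity at $z_0$. Making this rigorous requires the extension of Theorem \ref{grth} to potentials of the type discussed in \textsection\ref{nap}, control of the motion of the droplet under this smooth-plus-logarithmic perturbation (Hele--Shaw theory), so that a fixed compact subset of $\setS_1^\circ\cap X$ stays in the interior of the perturbed coincidence set, and uniformity of the one-point asymptotics \eqref{popp} under such perturbations --- precisely the technical points flagged in Remark \ref{end}. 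By contrast, the deduction sketched above is entirely routine.
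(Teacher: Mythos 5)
Your proposal reproduces the paper's argument: the theorem is obtained by reinterpreting the limit $\calB_n f(z_0)\to f^{\setS}(z_0)$ (established in the preceding display) as a statement about convergence of the measures $\berd_n^{\langle z_0\rangle}\dA$, identifying $f^{\setS}(z_0)$ as $f(z_0)$ inside the droplet and as the harmonic-measure average outside. Your added observations — that $\berd_n^{\langle z_0\rangle}\dA$ is a probability measure (nonnegativity from $\berd_n^{\langle z_0\rangle}=\babs{K_n(z_0,\cdot)}^2\e^{-n(Q(z_0)+Q(\cdot))}/R_n^1(z_0)$, total mass one from $\int R_n^{2,c}(z_0,w)\,\dA(w)=-R_n^1(z_0)$), which upgrades vague to weak-$*$ convergence, and that regularity of $\d\setS_1$ justifies the harmonic-measure representation of the bounded harmonic extension — are correct and make explicit what the paper leaves implicit. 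You also correctly locate the genuine technical burden: it is entirely in justifying $\calB_n f(z_0)\to f^{\setS}(z_0)$ via Theorem \ref{grth} for the logarithmically perturbed potential, together with the Hele–Shaw control of the perturbed droplet, and those points are indeed deferred in the paper (Remark \ref{end} and \cite{AHM2}). So this is the same route, filled in with reasonable care.
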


This theorem is also true at $z_0=\infty$, in which case it has the following form.

\begin{thm} \label{yto2} Let $P_n$ be the $n$-th orthonormal polynomial with respect to the  measure $\e^{-nQ}\dA$ in $\C$. Then the probability measures $$|P_n|^2\e^{-nQ}\dA$$ converge
to the harmonic measure of  $\hat \C\setminus\setS_1$ evaluated at $\infty$.
\end{thm}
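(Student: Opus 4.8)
The plan is to obtain Theorem~\ref{yto2} as the $z_0=\infty$ case of Theorem~\ref{yto}. The first ingredient is to identify $|P_n|^2\e^{-nQ}\,\dA$ with a Berezin measure associated to the point at infinity. From the formula for the Berezin kernel $\berd_n^{\langle z\rangle}$ in Section~\ref{skiit} together with $R_n^1(z)=K_n(z,z)\e^{-nQ(z)}$ one gets $\berd_n^{\langle z_0\rangle}(z)=|K_n(z_0,z)|^2\e^{-nQ(z)}/K_n(z_0,z_0)$, which is a probability density in $z$ for each $z_0\in\C$. Writing $K_n=K_{n,n}$ in an orthonormal polynomial basis and letting $z_0\to\infty$, the highest-degree polynomial $P_n$ dominates both numerator and denominator, so $\berd_n^{\langle z_0\rangle}(z)\to|P_n(z)|^2\e^{-nQ(z)}$ locally uniformly; as the growth condition \eqref{gro} bounds the tails uniformly in $z_0$, this is also convergence of probability measures. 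Equivalently --- and this is the form I would use in the estimates --- $|P_n|^2\e^{-nQ}=R_n^1-\wt R_{n-1}^1$, where $\wt R_{n-1}^1$ is the one-point function of the RNM ensemble with $n-1$ eigenvalues and potential $\wt Q_n:=\frac{n}{n-1}\,Q$; this ensemble is precisely the $z_0\to\infty$ degeneration of the conditional ensemble $\wt\Phi_{n-1}^{\langle z_0\rangle}$ of Lemma~\ref{yxa}, the logarithmic pole in $Q-\frac{1}{n-1}(\log|z-z_0|^2-Q)$ receding to $\infty$ and the term $\log|z-z_0|^2$ becoming a $z$-independent additive constant that does not affect the probability measure.

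Next I would run the derivation of Theorem~\ref{yto} verbatim, now at $z_0=\infty$, that is, with perturbation $h:=-Q$. The $z_0=\infty$ analogue of Corollary~\ref{BEF} reads $\int f\,|P_n|^2\e^{-nQ}\,\dA=E_n(\trace_n f)-\wt E_{n-1}(\trace_{n-1}f)$ for $f\in C(\widehat{\C})$, with $\wt E_{n-1}$ the expectation for the potential $\wt Q_n$. Since $h=-Q$ is real-analytic in a neighbourhood of $\setS_1$, the uniform expansion~\eqref{skum} (which rests on Lemma~\ref{klam}) remains valid for $\wt Q_n$; the unboundedness of $h$ at infinity is immaterial because, as emphasised in Remark~\ref{end} and Section~\ref{nap}, only a neighbourhood of the droplet matters and $\wt Q_n$ inherits the growth of $Q$. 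Combined with the facts about Hele--Shaw deformation of the droplet used in Section~\ref{var}, the variance part of Theorem~\ref{grth}, in the reformulation $D_n[f]-D_n^h[f]\to\frac{1}{4}\langle f^{\setS},h\rangle_\nabla$, then drives the computation exactly as in the proof of Theorem~\ref{yto}.

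The output, read off at $z_0=\infty$, is $\int f\,|P_n|^2\e^{-nQ}\,\dA\to f^{\setS}(\infty)$, where $f^{\setS}$ denotes the function equal to $f$ on $\setS_1$ and equal to the bounded harmonic extension of $f|_{\d\setS_1}$ on $U_*\cup\{\infty\}=\widehat{\C}\setminus\setS_1$. By the very definition of harmonic measure, $f^{\setS}(\infty)=\int_{\d\setS_1}f\,d\omega$ with $\omega$ the harmonic measure of $\widehat{\C}\setminus\setS_1$ at $\infty$. Since $f\in C(\widehat{\C})$ was arbitrary, $|P_n|^2\e^{-nQ}\,\dA$ converges weakly to $\omega$, which is the assertion of Theorem~\ref{yto2}. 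One passes from the test class $\coity(\C)$ appearing in Theorem~\ref{grth} to all of $C(\widehat{\C})$ by a standard density argument, using that the measures $|P_n|^2\e^{-nQ}\,\dA$ are probabilities with tails bounded uniformly in $n$, again by~\eqref{gro}.

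I expect the main obstacle to be the justification of the $z_0\to\infty$ degeneration of the computation, and in particular the fact that the term which for a finite base point equals $f^{\setS}(z_0)$ --- it arises from the logarithmic pole of the perturbation at $z_0$, via $\Delta\log|z-z_0|^2=\delta_{z_0}$ in the normalisation used here --- passes in the limit to $f^{\setS}(\infty)$ rather than disappearing: although $\log|z-z_0|^2$ tends, modulo an additive constant, to $0$ as $z_0\to\infty$, the associated unit point mass at $z_0$ does not vanish but moves to the point $\infty$ of $\widehat{\C}$, whose balayage onto $\d\setS_1$ is exactly harmonic measure at $\infty$. Making this rigorous amounts to carrying out the Dirichlet-space bookkeeping intrinsically on $\widehat{\C}\setminus\setS_1$; alternatively one could establish a Moore--Osgood type uniform version of Theorem~\ref{yto} for base points near $\infty$ (for fixed $n$, $z_0\mapsto\int f\,\berd_n^{\langle z_0\rangle}\,\dA$ is continuous on $\widehat{\C}$ with value $\int f\,|P_n|^2\e^{-nQ}\,\dA$ at $z_0=\infty$, and $z_0\mapsto\int f\,d\omega_{z_0}$ is continuous up to $\infty$, so equicontinuity in $z_0$ of the convergence $\berd_n^{\langle z_0\rangle}\,\dA\to\omega_{z_0}$ near $\infty$ would close the proof). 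A secondary, purely technical matter is the uniformity of the estimates behind~\eqref{skum} under the unbounded smooth perturbation $h=-Q$, which should reduce to bounds already available, by the locality of the analyticity hypothesis.
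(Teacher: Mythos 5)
Your proposal follows essentially the same route as the paper: identify the Berezin kernel at the base point $\infty$ with $|P_{n-1}|^2\e^{-nQ}$ (not $|P_n|^2$ --- since $K_{n,n}$ is built from polynomials of degree $\le n-1$, the top term that survives as $z_0\to\infty$ is $P_{n-1}$; the slip is shared with the theorem statement and harmless for the $n\to\infty$ limit), via Lemma~\ref{calm}, Lemma~\ref{yxa}, and the collapse of the conditional weight $\e^{-(n-1)\tilde Q^{\langle z_0\rangle}}$ to $\e^{-nQ}$ as $z_0\to\infty$; then invoke Theorem~\ref{yto} at $z_0=\infty$. You are more explicit than the paper about the one nontrivial step: the paper disposes of the extension of Theorem~\ref{yto} to $z_0=\infty$ with the single phrase ``Combining this with Theorem~\ref{yto},'' implicitly exchanging the limits $n\to\infty$ and $z_0\to\infty$, whereas you correctly note that the contribution $f^{\calS}(z_0)$ (coming from $\Delta\log|z-z_0|^2=\delta_{z_0}$ in the Dirichlet-form computation) must survive as $z_0\to\infty$ and turn into harmonic measure at $\infty$, and you name two viable ways to make that rigorous (Dirichlet-space bookkeeping on $\widehat\C\setminus\setS_1$, or a Moore--Osgood/equicontinuity argument in $z_0$). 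That is precisely the point the paper leaves to the reader, so your account is, if anything, a more complete version of the same proof rather than a different one.
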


\begin{proof} We need to compute the limit of the Berezin kernel $\berd_n^{\langle z_0\rangle}(z)$ as $z_0\to\infty$. By Lemma~\ref{calm} we have
$$\berd_n^{\langle z_0\rangle}(z)=R^1_n(z)-\tilde R^1_{n-1}(z),$$
where $R^1_n$ and $\tilde R^1_{n-1}$ are the 1-point functions of  $\Phi_n$ and $\wt{\Phi}_{n-1}^{\langle z_0\rangle}$ respectively. Since  $\Phi_n$ is the $n$-point RNM process associated with potential $Q$, we have
$$R^1_n=\sum_{k=0}^{n-1}|P_k|^2~e^{-nQ}.$$
On the other hand, by Lemma~\ref{yxa},  $\wt{\Phi}_{n-1}^{\langle z_0\rangle}$ is the $(n-1)$-point RNM process associated with the potential
$$\tilde Q^{<z_0>}(z)=\frac n{n-1}Q(z)+\frac1{n-1}\log\lpar \frac {\babs{z_0}^2} {\babs{z-z_0}^2}\rpar.$$
(Here we added a constant term to the potential $\tilde Q$ in Lemma~\ref{yxa}; this clearly didn't affect the point process.)
Since
$$\tilde Q^{<z_0>}(z)\to \tilde Q(z):=\frac n{n-1}Q(z)\quad{\rm as}\quad z_0\to\infty,$$
we have
$$\lim_{z_0\to\infty} \tilde R^1_{n-1}=\sum_{k=0}^{n-2}|\tilde P_k|^2~e^{-(n-1)\tilde Q},$$
where $\{\tilde P_k\}$ are orthonormal polynomials with respect to the weight
$$e^{-(n-1)\tilde Q}=e^{-nQ}.$$
Since the weight is the same for the polynomials $\{P_k\}$ and $\{\tilde P_k\}$ we have
$$\berd_n^{\langle \infty\rangle}=\sum_{k=0}^{n-1}|P_k|^2~e^{-nQ}-\sum_{k=0}^{n-2}|P_k|^2~e^{-nQ}=|P_{n-1}|^2~e^{-nQ}.$$
Combining this with Theorem~\ref{yto} we conclude the proof.
 \end{proof}

\subsection{Further remarks on the cumulant method}\label{compare}
We here continue our discussion of the cumulant method (Sect \ref{main1})
and compare our result with some other
related work using this method.

In \cite{S1}, Soshnikov studied
linear statistics of the form
$\trace_n g_n-E(\trace_n g_n)$ where
$g_n(t)=g(L_n t)$ and $L_n$ is a fixed sequence with $L_n\to\infty$, $L_n/n\to 0$. The expectation is here understood with respect
to the classical Weyl measure on $[-\pi,\pi)^n$, i.e., we
are considering the Gaussian unitary ensemble; $g:\R\to\R$ is a test function in the Schwarz class.

 In \cite{S1}, asymptotic normality is proved for these linear statistics using the cumulant method applied to the sine-kernel,
i.e. the explicit correlation kernel in that case. The asymptotic variance of $\trace_n g_n$ turns out to be finite and independent
of the particular sequence $L_n$; it equals
$\frac 1 {2\pi}\int_\R \babs{\hat{g}(t)}^2\babs{t}~\diff t$.

The method in \cite{S1} does however not allow to draw conclusions about the case $L_n\approx 1$;
the assumption $L_n\to\infty$ is used in the proof of Theorem 1 (p. 1357), where limits of certain
Riemann sums are identified.

We also want to mention the short proof of asymptotic normality due to
Costin and Lebowitz \cite{CL}. In the situation of \cite{CL}, one considers certain linear statistics which have infinite asymptotic variance. This infiniteness of the
variance is then used to show decay of the cumulants of the corresponding normalized variables.
(Thus the method in \cite{CL} necessarily breaks down in our situation,
when the variance tends to a finite limit.)

The cumulant method has also been used in the theory of Gaussian analytic functions, see \cite{ST}. In this case, asymptotic normality was
obtained for linear statistics
 whose variances converge to zero. In \cite{SZ}, the result was generalized to a setting of zeros of random holomorphic sections of high powers of a positive Hermitian line bundle over a K\"{a}hler manifold.
 (Cf. the book \cite{HKPV} for further developments in the theory of Gaussian analytic functions.)

\bigskip

\subsubsection*{Acknowledgements.} We are grateful to Alexei
Borodin, Kurt Johansson and Paul Wiegmann for help and useful
discussions.

\end{document}